\numberwithin{equation}{section}
\numberwithin{figure}{section}
\newtheorem{theorem}{Theorem}[section]
\newtheorem{lemma}[theorem]{Lemma}
\theoremstyle{definition}
\newtheorem{definition}[theorem]{Definition}
\theoremstyle{remark}
\newtheorem{remark}[theorem]{Remark}
\numberwithin{equation}{section} \theoremstyle{plain}
\newtheorem{corollary}[theorem]{Corollary}
\newtheorem{proposition}[theorem]{Proposition}
\newcommand{\set}[1]{\left\{#1\right\}}
\begin{document}

\title[Rigidity of warped product Einstein manifolds]{Warped product Einstein metrics over spaces with constant scalar curvature}

\author{Chenxu He}
\address{14 E. Packer Ave\\
Dept. of Math, Lehigh University \\
Christmas-Saucon Hall\\
Bethlehem, PA 18015}
\email{he.chenxu@lehigh.edu}
\urladdr{http://sites.google.com/site/hechenxu/}

\author{Peter Petersen}
\address{520 Portola Plaza\\
Dept of Math UCLA\\
Los Angeles, CA 90095}
\email{petersen@math.ucla.edu}
\urladdr{http://www.math.ucla.edu/\symbol{126}petersen}
\thanks{The second author was supported in part by NSF-DMS grant 1006677}

\author{William Wylie}
\address{209 S 33rd St \\
Dept. of Math, University of Pennsylvania\\
Philadelphia, PA 19104.}
\email{wylie@math.upenn.edu}
\urladdr{http://www.math.upenn.edu/\symbol{126}wylie}
\thanks{The third author was supported in part by NSF-DMS grant 0905527}

\subjclass[2000]{53B20, 53C30}

\begin{abstract}
In this paper we study warped product Einstein metrics over spaces with constant scalar curvature. We call such a manifold rigid if the universal cover of the base is Einstein or is isometric to a product of Einstein manifolds. When the base is three dimensional and the dimension of the fiber is greater than one we show that the space is always rigid. We also exhibit examples of solvable four dimensional Lie groups that can be used as the base space of non-rigid warped product Einstein metrics showing that the result is not true in dimension greater than three. We also give some further natural curvature conditions that characterize the rigid examples in higher dimensions.
\end{abstract}

\maketitle

\section{Introduction}

A $(\lambda,n+m)$-Einstein manifold $(M^{n},g,w)$ is a complete Riemannian manifold, possibly with boundary, and a smooth function $w$ on $M$ satisfying:
\begin{eqnarray}
\mathrm{Hess}w & = & \frac{w}{m}(\mathrm{Ric}-\lambda g)\notag\\
w & > & 0\mbox{ on }\mathrm{int}(M),\label{eqnlambdan+m}\\
w & = & 0\mbox{ on }\partial M.\notag
\end{eqnarray}
When $m=1$, we make the additional assumption that $\Delta w=-\lambda w$.

$(\lambda,n+m)$-Einstein metrics are also called \emph{$m$-Quasi Einstein} manifolds and the case where $\partial M$ is empty was studied earlier in \cite{CaseShuWei} and \cite{KimKim}. We also
studied this equation in \cite{HPWLcf} and showed that many of the results from these earlier works generalize to the case where the boundary is non-empty.

The $(\lambda,n+m)$-Einstein equation has a natural geometric interpretation. Namely, if $m>1$ is an integer, then $(M^{n},g,w)$ is a $(\lambda,n+m)$-Einstein manifold if and only if there is a smooth $(n+m)$-dimensional warped product Einstein metric (with no boundary) with base space $M$ (see \cite[Proposition 1.1]{HPWLcf}). Thus we can study warped product Einstein metrics by analyzing the $(\lambda,n+m)$-Einstein equation on the lower dimensional base space. While the motivation requires $m$ to be an integer, there is no reason to restrict to this case in any of our results. When $m=1$, solutions are also called the \emph{static metrics} and have been studied thoroughly for their connection to general relativity and the positive mass theorem \cite{Corvino}. The equation is also closely related to considerations in optimal transport \cite{Villani} and comparison geometry\cite{WeiWylie}.

Many Einstein metrics can be constructed as warped products. In fact the first non-trivial example of an Einstein metric, the Schwarzschild metric, is a $4$-dimensional doubly warped product metric on $\mathbb{R}^{2}\times\mathbb{S}^{2}$. In our context it can be viewed in two ways, either as a $(0,2+2)$-Einstein metric on $\mathbb{R}^{2}$, or as a $(0,3+1)$-Einstein metric on $[0,\infty)\times\mathbb{S}^{2}$. Much more recently C. B\"{o}hm constructed interesting warped product Einstein metrics on spheres and product of spheres. These examples give rotationally symmetric $(\lambda,n+m)$-Einstein metrics on hemispheres and spheres respectively when $n=3,4,5,6,7$ (see \cite{Bohm1}). Also see \cite{LvPagePope,Bohm2} for more examples.

The B\"{o}hm metrics are in contrast to the solutions to the analogous \emph{gradient Ricci soliton} equation,
\begin{equation*}
\mathrm{Ric}+\mathrm{Hess}f=\lambda g.
\end{equation*}
There is a well known classification theorem of three dimensional gradient Ricci solitons with $\lambda>0$ following from the works of Ivey\cite{Ivey}, Hamilton\cite{Hamilton}, and Perelman\cite{Perelman}. Thus the B\"{o}hm metrics show that there are more examples of $(\lambda,3+m)$-Einstein metrics than gradient Ricci solitons. It is then natural to ask whether one can classify $(\lambda,3+m)$-Einstein metrics under additional natural curvature assumptions. Before we state our first result in this direction, we require a definition.
\begin{definition}
A $(\lambda,n+m)$-Einstein manifold $(M,g,w)$ is called \emph{rigid} if it is Einstein or its universal cover is a product of Einstein manifolds.
\end{definition}

There is an explicit list of the possible rigid examples, in particular we will see that there are at most two different Einstein constants in the product, one of which is $\lambda$, see Section 2. Our first result is that, in dimension three, constant scalar curvature characterizes rigidity.
\begin{theorem} \label{thmdim3}
A $(\lambda,3+m)$-Einstein manifold with $m>1$ has constant scalar curvature if and only if it is rigid. In particular, if the manifold has no boundary, then it is a quotient of $\mathbb{S}^{3}$, $\mathbb{S}^{2}\times\mathbb{R}$, $\mathbb{R}^{3}$, $\mathbb{H}^{2}\times\mathbb{R}$, or $\mathbb{H}^{3}$ with the standard metrics.
\end{theorem}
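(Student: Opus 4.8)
The statement packages two implications, and the plan is to dispatch the direction ``rigid $\Rightarrow$ constant scalar curvature'' immediately and then concentrate on the converse. If $(M,g,w)$ is rigid, its universal cover is Einstein or a Riemannian product of Einstein manifolds; in either case $\scal$ is the sum of the (constant) scalar curvatures of the factors, and since scalar curvature is preserved by the covering projection, $\scal$ is constant on $M$. Note also that if $\nabla w\equiv 0$ then $\Hess w=0$, so \eqref{eqnlambdan+m} forces $\Ric=\lambda g$ and $M$ is Einstein, hence rigid; thus in the converse we may assume $w$ is nonconstant.

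For the converse, assume $\scal$ is constant and $m>1$. The first move is to differentiate \eqref{eqnlambdan+m}. Taking the trace gives $\Delta w=\frac{w}{m}(\scal-n\lambda)$, so $w$ is already an eigenfunction of the Laplacian. The more useful identity comes from taking a divergence of \eqref{eqnlambdan+m} and applying the contracted second Bianchi identity $\dive\Ric=\half\nabla\scal$, which yields
\[ w\,\nabla\scal = -2(m-1)\,\Ric(\nabla w) - 2\bigl(\scal-(n-1)\lambda\bigr)\nabla w. \]
With $\scal$ constant the left side vanishes, and because $m>1$ we may solve to conclude that $\nabla w$ is everywhere an eigenvector of $\Ric$ with the constant eigenvalue $\mu=-\frac{\scal-(n-1)\lambda}{m-1}$. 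Substituting back into \eqref{eqnlambdan+m} gives $\half\nabla|\nabla w|^2=\Hess w(\nabla w)=\frac{w}{m}(\mu-\lambda)\nabla w$, so $\nabla|\nabla w|^2$ is proportional to $\nabla w$ and $|\nabla w|^2$ is constant on each connected level set of $w$.

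Next I would exploit $n=3$ on the regular set $U=\set{\nabla w\neq0}$. There $w$ is a coordinate, the level sets $\Sigma_c$ are surfaces, and since $|\nabla w|^2$ is a function of $w$ alone the metric is of warped type $g=dr^2+g_r$ along the unit field $N=\nabla w/|\nabla w|$. The second fundamental form of $\Sigma_c$ is $\frac{w}{m|\nabla w|}(\Ric-\lambda g)$ restricted to $T\Sigma_c$. The dimensional input enters twice: $\nabla w$ being a Ricci eigenvector makes the tangent $2$-plane $T\Sigma_c$ invariant under $\Ric$, and in dimension three the entire curvature tensor is determined by $\Ric$. Writing the tangential Ricci eigenvalues as $\alpha,\beta$, constancy of $\scal$ and of $\mu$ forces $\alpha+\beta$ to be constant, so everything reduces to controlling the traceless part $\alpha-\beta$. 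The crux --- and the step I expect to be the main obstacle --- is to show $\alpha=\beta$, i.e.\ that the level surfaces are umbilic with tangential Ricci a multiple of the metric. I would obtain this from the Riccati equation for the shape operator of $\Sigma_c$ along $N$: the constancy of the normal eigenvalue $\mu$ turns the traceless part of this equation into a linear radial ODE, and completeness together with the behaviour of $w$ near its critical set forces the traceless part to vanish identically. Once $\alpha=\beta$ the surfaces $\Sigma_c$ have constant Gauss curvature, the radial equations integrate explicitly, and $(U,g)$ is locally a space of constant curvature or a metric product of a constant-curvature surface with an interval.

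It remains to globalize and to read off the explicit list. The local product/space-form structure on $U$ extends across the critical set $\set{\nabla w=0}$ and up to the boundary $\set{w=0}$ by continuity together with the analyticity enjoyed by solutions of the Einstein-type system \eqref{eqnlambdan+m}; passing to the universal cover then shows it is Einstein or a product of Einstein manifolds, which is precisely rigidity. In the boundaryless case I would invoke the description of rigid examples from Section~2: in dimension three an Einstein universal cover is a space form $\mathbb{S}^3$, $\mathbb{R}^3$, or $\mathbb{H}^3$, while a nontrivial product of Einstein manifolds can only be a constant-curvature surface times a line, namely $\mathbb{S}^2\times\mathbb{R}$ or $\mathbb{H}^2\times\mathbb{R}$ (the flat product $\mathbb{R}^2\times\mathbb{R}$ being subsumed in $\mathbb{R}^3$). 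Quotienting by the deck group yields exactly the stated list.
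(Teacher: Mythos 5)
Your framework is essentially the paper's: the easy direction, the fact that $\Ric(\nabla w)=\rho\,\nabla w$ with $\rho=\frac{(n-1)\lambda-\scal}{m-1}$ constant, the warped structure $g=\df r^{2}+g_{r}$ on the regular set, and the reduction to showing that the two tangential Ricci eigenvalues $\alpha,\beta$ coincide. The gap is exactly at the step you flag as the crux. You only record that $\alpha+\beta$ is constant, but constancy of $\scal$ also gives $|P|^{2}=(\lambda-\rho)\mathrm{tr}\,P=\mathrm{const}$ (Proposition \ref{propCSWconstantscal}), so $\alpha$ and $\beta$ are each constant. This is not a luxury: it collapses your radial Riccati equation into the pointwise algebraic relation (using that in dimension three $R$ is determined by $\Ric$, together with Proposition \ref{propderivPQ})
\begin{equation*}
0=-\left(\frac{w}{m}\right)^{2}(\lambda-\rho)p_{i}+\left(\frac{w}{m}\right)^{2}p_{i}^{2}+\frac{m+1}{2m}\left(p_{i}-p_{j}\right)|\nabla w|^{2},\qquad i\neq j,
\end{equation*}
and the real work is to see that this relation still does \emph{not} force $p_{1}=p_{2}$ pointwise.

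Subtracting the two relations leaves, besides $p_{1}=p_{2}$, a branch on which $|\nabla w|^{2}/w^{2}$ is constant; via $\bar{k}w^{2}+|\nabla w|^{2}=\bar{\mu}$ this forces $\bar{\mu}=0$, $w=e^{\sqrt{-\bar{k}}\,r}$, and ultimately $\lambda=0$. On that branch $w$ has no critical points and $M$ has no boundary, so your proposed mechanism --- ``completeness together with the behaviour of $w$ near its critical set'' --- has nothing to act on; the paper disposes of this branch by a separate global argument (Theorem \ref{thmuniquenesslambdazero}: for $\lambda=0$ the Cauchy--Schwarz inequality applied to $|P|^{2}=\frac{(\mathrm{tr}\,P)^{2}}{n+m-1}$ forces $P=0$), not by an ODE along $\nabla r$. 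Until you either rule out this exceptional branch or treat it by some such independent device, the proof of the key umbilicity step is incomplete. (A smaller point: the final globalization does not need analyticity --- once the eigenvalues are constant and equal, or $\lambda=0$, rigidity is read off globally from Proposition \ref{proplambdaEinsteinEinstein} and Corollary \ref{corrhozero}.)
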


\begin{remark}
In \cite{Seshadri} H. Seshadri considered compact $(\lambda,3+1)$-Einstein manifold where the total space admits a non-trivial circle action. Under certain further conditions on the circle action, he showed that the total space is either the standard $\mathbb{S}^{4}$ with the base $3$-disk $\mathbb{D}^{3}$ or the Riemannian product $\mathbb{S}^{2}\times\mathbb{S}^{2}$ with the base $\left[-\frac{\pi}{2},\frac{\pi}{2}\right]\times\mathbb{S}^{2}$.
\end{remark}

Note, on the other hand, that every $(\lambda,n+1)$-Einstein metric has constant scalar curvature. While constant scalar curvature may appear to be a strong assumption, we also exhibit examples showing that this theorem is not true in higher dimensions.

\begin{theorem} \label{thmhomogeneousnonrigid}
For each $m>0,$ there are 4-dimensional solvable Lie groups with left invariant $\left(\lambda,4+m\right)$-Einstein metrics that are not rigid.
\end{theorem}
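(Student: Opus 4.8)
The plan is to produce the examples explicitly as left-invariant structures on a one-parameter family of four-dimensional solvable Lie groups, exploiting the fact that a left-invariant metric is automatically homogeneous and hence has constant scalar curvature; thus the only genuine work is to solve the $(\lambda,4+m)$-Einstein equation \eqref{eqnlambdan+m} and to certify non-rigidity. I would take $\lieg = \Real e_0 \ltimes_A \lien$, where $\lien = \langle e_1,e_2,e_3\rangle$ is the three-dimensional Heisenberg algebra ($[e_1,e_2]=b\,e_3$ with $e_3$ central, $b$ a free structure constant) and $A=\mathrm{ad}_{e_0}|_{\lien}$ is a derivation; declaring $e_0,e_1,e_2,e_3$ orthonormal fixes the metric. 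Choosing the warping function invariant under the Heisenberg subgroup, i.e. $w=w(r)$ with $r$ the arclength coordinate along $e_0$, collapses the tensor equation \eqref{eqnlambdan+m} into finitely many algebraic relations, since both $\Hess w$ and $\Ric$ are diagonal in the adapted orthonormal frame.

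Concretely, I would first record the connection and curvature of $\lieg$ in this frame, obtaining a $\Ric$ whose entries are explicit quadratics in the eigenvalues $a_1,a_2,a_3=a_1+a_2$ of $A$ (the relation $a_3=a_1+a_2$ being forced by the derivation property on $\lien$) together with contributions from $b$. Because $\Ric$ is constant, the fiber-direction components of \eqref{eqnlambdan+m} force $w'/w$ to be constant, so $w=e^{cr}$; substituting the resulting $\Hess w$ then yields a polynomial system in $a_1,a_2,b,c,\lambda$ with parameter $m$. I would solve this system and exhibit, for every $m>0$, a real solution with $a_1\neq a_2$ and $w>0$, the extra parameter $b$ (and the overall metric scale) providing enough room to keep a solution branch with $a_1\neq a_2$ as $m$ varies.

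The conceptual reason this should succeed — and the guide for the computation — is that, for integer $m$, the warped product of the base with a flat $m$-dimensional fiber is the larger solvmanifold $\Real\ltimes_{\hat A}(\lien\oplus\Real^m)$ with $\hat A = A\oplus cI_m$, and this total space is Einstein precisely when the nilpotent algebra $\lien\oplus\Real^m$ carries the nilsoliton structure whose rank-one extension is Einstein. The existence of Einstein solvmanifolds of this type (the complex-hyperbolic family and its analogues) is what makes the system solvable; the direct algebraic solution of the previous paragraph then handles all real $m>0$ with no integrality restriction.

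The main obstacle is to guarantee non-rigidity rather than mere solvability. The naive abelian ansatz, taking $\lien$ abelian, fails: there the analogous system forces $A$ to be essentially a multiple of the identity, so the base is real hyperbolic or flat and hence rigid — this is exactly why the non-abelian Heisenberg bracket is essential. Having solved the system I would therefore verify two things: that the base is not Einstein, checked by exhibiting distinct $\Ric$ eigenvalues, which holds precisely because $a_1\neq a_2$; and that its universal cover does not split isometrically as a product of Einstein factors, checked at the level of the metric Lie algebra by showing the putative de Rham splitting is obstructed for these parameters. Confirming this irreducibility uniformly in $m$ is the delicate point; everything else reduces to the routine but lengthy curvature computation.
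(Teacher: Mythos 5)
Your overall strategy --- realize the base as a rank-one solvable extension $\Real e_{0}\ltimes_{A}\lien$ with $w=e^{cr}$, reduce the equation to algebra on the Lie algebra via the left-invariant form (\ref{eqnlambdan+mleftinv}), and certify non-rigidity from the Ricci eigenvalues --- is sound in outline. But the specific solution branch you plan to exhibit does not exist, and this is a genuine gap. Take $\lien=\mathrm{Heis}_{3}$ with $[e_{1},e_{2}]=be_{3}$ and $A=\mathrm{ad}_{e_{0}}|_{\lien}=\mathrm{diag}(a_{1},a_{2},a_{1}+a_{2})$ symmetric in the orthonormal frame (which is what your requirement that $\Hess w$ and $\Ric$ be diagonal in the adapted frame forces). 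Writing $c=\sqrt{m(\rho-\lambda)}$ and $s=a_{1}+a_{2}$, the standard Ricci formulas for such a solvmanifold give $\Ric(e_{i},e_{i})=-\tfrac{b^{2}}{2}-2sa_{i}$ for $i=1,2$, $\Ric(e_{3},e_{3})=\tfrac{b^{2}}{2}-2s^{2}$, and $\rho=\Ric(e_{0},e_{0})=-\mathrm{tr}(A^{2})$, while (\ref{eqnlambdan+mleftinv}) demands $\Ric(e_{i},e_{i})=\lambda+ca_{i}$. Subtracting the $e_{1}$ and $e_{2}$ equations shows that $a_{1}\neq a_{2}$ forces $c=-2s$; the $e_{3}$ equation then collapses to $\tfrac{b^{2}}{2}=\lambda$, which is impossible since $\lambda>0$ would make $\rho-\lambda=-\mathrm{tr}(A^{2})-\lambda<0$ and $c$ imaginary. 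So on $\Real\ltimes\mathrm{Heis}_{3}$ there are no solutions with $a_{1}\neq a_{2}$; indeed your own heuristic (the total space should be an Einstein solvmanifold, whose derivation restricted to the Heisenberg factor must be the soliton derivation $\mathrm{diag}(1,1,2)$ up to scale) already forces $a_{1}=a_{2}$. Consequently the planned certificate of non-rigidity --- ``distinct $\Ric$ eigenvalues, which holds precisely because $a_{1}\neq a_{2}$'' --- cannot be carried out.

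The construction is salvageable, but only along the branch you discard: with $a_{1}=a_{2}=a>0$ one finds $c=\tfrac{3}{2}ma$, $b^{2}=(4+\tfrac{3}{2}m)a^{2}$, $\lambda=-6a^{2}-\tfrac{9}{4}ma^{2}$, $\rho=-6a^{2}$, and the Ricci eigenvalues are $\rho$ and $-6a^{2}\pm\tfrac{3}{4}ma^{2}$ --- three distinct values for $m>0$, which rules out rigidity since by Proposition \ref{proprigidnontrivial} a non-trivial rigid example has only the eigenvalues $\lambda$ and $\rho$. The true source of non-rigidity is the Heisenberg bracket $b\neq0$, not any asymmetry of $A$. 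Note also that the paper's examples are structurally different: there the codimension-one normal subgroup is a non-unimodular solvable (non-nilpotent) group, with $\lieg=\mathfrak{a}+[\lieg,\lieg]$ and both $\mathfrak{a}$ and $[\lieg,\lieg]$ two-dimensional abelian; your dismissal of ``the abelian ansatz'' only covers the rank-one case $\Real\ltimes_{A}\Real^{3}$ and does not apply to this rank-two configuration. Finally, the delicate point you flag --- excluding a de~Rham splitting uniformly in $m$ --- is best handled exactly by the eigenvalue count above (as the paper does via $\mathrm{scal}=3\lambda-(m-1)\rho\neq2\lambda+2\rho$), rather than by a separate holonomy argument.
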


\begin{remark}
These examples are also in stark contrast to the gradient Ricci soliton case, where all homogeneous gradient Ricci solitons are rigid. As $m$ goes to infinity these examples converge to a Riemannian
product $\mathbb{R}\times H^{3}$ where $H$ is a homogeneous (non-gradient) Ricci soliton, see Theorem \ref{thmnonrigidconvergence}.
\end{remark}

\begin{remark}
We also note that the left invariant metrics we construct on solvable Lie groups are not solvsoliton metrics unless they are isometric to $\mathbb{R}\times H$ or are rigid.
\end{remark}

We are then led to the question of whether we can classify $(\lambda,n+m)$-Einstein metrics with constant scalar curvature in higher dimensions under natural additional assumptions. This problem is also addressed in \cite{CaseShuWei} where it is shown that every compact $(\lambda,n+m)$-Einstein metric with constant scalar curvature which does not have boundary must be a trivial $\lambda$-Einstein metric. They also show that when $\lambda=0$ the metric must be Ricci flat, our next result is the extension of this result to the non-empty boundary case.

\begin{theorem} \label{thmuniquenesslambdazero}
Suppose $(M,g,w)$ is a complete $(0,n+m)$-Einstein manifold with constant scalar curvature, then $(M,g)$ is either Ricci flat without boundary or it is isometric to $(F\times[0,\infty),g_{F}+\mathrm{d}r^{2})$ where $(F,g_{F})$ is Ricci flat and $w=w(r)$ is a linear function.
\end{theorem}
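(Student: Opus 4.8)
The plan is to prove the sharper fact that $M$ must be Ricci flat, and then to read off the two possibilities from the resulting parallel structure of $w$. Throughout I assume $m>1$ (this is needed: for $m=1$ the equation $\Delta w=0$ already forces $\scal=0$ but not Ricci flatness, as the spatial Schwarzschild metric shows, so that borderline case must be excluded). Write $S=\scal$, a constant by hypothesis, and work on the interior where $w>0$.

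The first step is to record two consequences of $\Hess w=\frac{w}{m}\Ric$. Taking the trace gives $\Delta w=\frac{w}{m}S$. Next I invoke the conservation law for these metrics (established in the no-boundary case by Kim--Kim and extended in \cite{HPWLcf}): the quantity $\mu=w\Delta w+(m-1)|\nabla w|^{2}$ is constant. Since $w\Delta w=\frac{S}{m}w^{2}$, this reads
\[
|\nabla w|^{2}=\frac{1}{m-1}\left(\mu-\frac{S}{m}w^{2}\right),
\]
so $|\nabla w|^{2}$ is an explicit function of $w$. Differentiating this and using $\Hess w(\nabla w)=\frac{w}{m}\Ric(\nabla w)$ yields $\Ric(\nabla w)=-\frac{S}{m-1}\nabla w$; that is, $\nabla w$ is an eigenvector of the Ricci tensor.

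The heart of the argument is a Bochner computation for $|\nabla w|^{2}$. Substituting $\Hess w=\frac{w}{m}\Ric$, $\nabla\Delta w=\frac{S}{m}\nabla w$, and $\Ric(\nabla w,\nabla w)=-\frac{S}{m-1}|\nabla w|^{2}$ into
\[
\tfrac12\Delta|\nabla w|^{2}=|\Hess w|^{2}+\langle\nabla w,\nabla\Delta w\rangle+\Ric(\nabla w,\nabla w)
\]
gives $\tfrac12\Delta|\nabla w|^{2}=\frac{w^{2}}{m^{2}}|\Ric|^{2}-\frac{S}{m(m-1)}|\nabla w|^{2}$. On the other hand, applying $\Delta$ directly to the explicit formula for $|\nabla w|^{2}$ above (using $\Delta(w^{2})=\frac{2S}{m}w^{2}+2|\nabla w|^{2}$) gives $\tfrac12\Delta|\nabla w|^{2}=-\frac{S^{2}w^{2}}{m^{2}(m-1)}-\frac{S}{m(m-1)}|\nabla w|^{2}$. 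Equating the two expressions, the $|\nabla w|^{2}$ terms cancel and, after dividing by $w^{2}/m^{2}>0$, I obtain the pointwise identity
\[
|\Ric|^{2}=-\frac{S^{2}}{m-1}.
\]
Since the left side is nonnegative and the right side nonpositive, both vanish: $S=0$ and $\Ric\equiv0$ on the interior, hence on all of $M$ by continuity. I expect this step---extracting a sign-definite identity from the interplay of Bochner and the conservation law---to be the crux; everything hinges on the two computations of $\Delta|\nabla w|^{2}$ agreeing.

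Once $\Ric\equiv0$, the Einstein equation collapses to $\Hess w=0$, so $\nabla w$ is parallel. If $\nabla w\equiv0$ then $w$ is a positive constant, which is incompatible with a boundary (where $w=0$); thus $\partial M=\emptyset$ and $M$ is Ricci flat without boundary. If $\nabla w\not\equiv0$, then $\nabla w$ is a nonzero parallel field, $|\nabla w|$ is a positive constant, and the de Rham--type splitting for a parallel gradient field writes $M$ isometrically as a product with $w$ an affine function of the line parameter $r$. Because $w\ge0$ with $w>0$ on the interior, the affine function $w$ cannot be unbounded below, so the only way to terminate the line factor is at a boundary level set $\{w=0\}$; hence $M\cong(F\times[0,\infty),g_{F}+\mathrm{d}r^{2})$ with $w=w(r)$ linear. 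In this product metric $\Ric_{M}=\Ric_{F}\oplus0$, so the Ricci flatness of $M$ forces $F$ to be Ricci flat, completing the classification. The only genuinely boundary-sensitive point is this final dichotomy, and there the condition $w=0$ on $\partial M$ does all the work.
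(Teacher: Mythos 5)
Your proof is correct, and it reaches the same two structural conclusions ($\Ric\equiv 0$, then split off a line if $\nabla w\not\equiv 0$) but by a more self-contained route. The paper instead quotes the identity $|P|^{2}=(\lambda-\rho)\,\mathrm{tr}\,P$ from Proposition \ref{propCSWconstantscal} (which ultimately rests on the Bochner-type formula (3.12) of \cite{CaseShuWei}), specializes to $\lambda=0$ to get $|P|^{2}=\frac{1}{n+m-1}(\mathrm{tr}\,P)^{2}$, and then invokes Cauchy--Schwarz to force $\mathrm{tr}\,P=0$ and hence $\Ric=\rho g=0$, after which it simply cites the classification in Proposition \ref{proplambdaEinsteinEinstein}. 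You rederive the same information from scratch: the Bochner formula for $|\nabla w|^{2}$ combined with the Kim--Kim conservation law $\bar{\mu}=\bar{k}w^{2}+|\nabla w|^{2}$ yields $|\Ric|^{2}=-S^{2}/(m-1)$, which is in fact algebraically identical to the paper's identity rewritten in terms of $\Ric$ rather than $P$ (one checks $|\Ric|^{2}=\rho\,\scal$ from $|P|^{2}=-\rho\,\mathrm{tr}\,P$), but in your formulation the sign-definiteness is manifest and no Cauchy--Schwarz step is needed. Your intermediate computation $\Ric(\nabla w)=-\frac{S}{m-1}\nabla w$ is exactly the statement $P(\nabla w)=0$ from Proposition \ref{propCSWconstantscal}. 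What your version buys is independence from the $P$-tensor machinery of Section 3 and an explicit execution of the final splitting (parallel $\nabla w$, affine $w\geq 0$, termination at the zero set) rather than an appeal to Table \ref{TablelambdaEinsteinEinstein}; what it costs is that the two Laplacian computations must be carried out by hand. Your observation that $m>1$ is genuinely needed (spatial Schwarzschild being a scalar-flat but not Ricci-flat static metric) is a correct reading of the paper's implicit standing hypothesis, visible in its line ``$\mathrm{tr}(P)=0$ as $m>1$.''
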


Classification theorems for gradient Ricci solitons with constant scalar curvature were also considered by the last two authors in \cite{PWclassification}. They show that a gradient Ricci soliton is rigid if it has constant scalar curvature and all the sectional curvatures in the direction of the gradient of the potential function are zero. Our next result appears to be a stronger result for $(\lambda,n+m)$-Einstein metrics.

\begin{theorem} \label{thmscaln-1}
Suppose $(M,g,w)$ is a complete $(\lambda,n+m)$-Einstein manifold with constant scalar curvature and $\lambda>0(<0)$ , if $\mathrm{Ric}(\nabla w,\nabla w)\leq0(\geq0)$, then $M$ is isometric to the product $\mathbb{R}\times N$ where $N$ is a $\lambda$-Einstein manifold.
\end{theorem}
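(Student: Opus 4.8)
The plan is to extract two pointwise consequences of constant scalar curvature and then run a Bochner argument that pins down the constant $\Ric(\nabla w,\nabla w)/|\nabla w|^2$. Tracing the defining equation gives $\Delta w=\frac{w}{m}(\scal-n\lambda)$, so with $\scal$ constant one has $\Delta w=-\rho w$ where $\rho=(n\lambda-\scal)/m$. Substituting $\Ric=\frac{m}{w}\Hess w+\lambda g$ into the contracted second Bianchi identity $\dive\Ric=\half\nabla\scal=0$ and commuting covariant derivatives, I expect the resulting integrability condition to collapse to $\nabla[(m-1)|\nabla w|^2+aw^2]=0$ with $a=\lambda-\rho$, i.e. the conservation law
\begin{equation*}
(m-1)|\nabla w|^2+aw^2=C.
\end{equation*}
This has two payoffs. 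First, since $|\nabla w|^2$ is now a function of $w$ alone, for $X\perp\nabla w$ we get $\Hess w(\nabla w,X)=\half X(|\nabla w|^2)=0$, so in a frame $e_1=\nabla w/|\nabla w|,e_2,\dots,e_n$ the Hessian is block diagonal. Second, contracting the equation against $\nabla w$ and using the conservation law yields $\Ric(\nabla w,\nabla w)=b|\nabla w|^2$ with the constant $b=\frac{(n-1)\lambda-\scal}{m-1}$.

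The hypotheses now read $b\le0$ (when $\lambda>0$) or $b\ge0$ (when $\lambda<0$), and the theorem is equivalent to $b=0$, since $b=0$ means $\scal=(n-1)\lambda$. The main step is the Bochner formula for $\half\Delta|\nabla w|^2$: feeding in $\Delta w=-\rho w$, the relation $\Ric(\nabla w,\nabla w)=b|\nabla w|^2$, and the conservation law to rewrite the left-hand side, I expect the $|\nabla w|^2$-terms to cancel. This cancellation is the linchpin, and it should rest on the algebraic identity $b+\frac{a}{m-1}=\rho$, leaving the clean identity
\begin{equation*}
|\Hess w|^2=\frac{a\rho}{m-1}\,w^2 .
\end{equation*}
On the other hand the block structure gives $|\Hess w|^2=H_{11}^2+|\hat H|^2$ with $H_{11}=\frac{b-\lambda}{m}w$ and $\Tr\hat H=-bw$, so Cauchy--Schwarz on the restricted Hessian $\hat H$ gives $|\Hess w|^2\ge\frac{(\lambda-b)^2}{m^2}w^2+\frac{b^2}{n-1}w^2$. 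Since $w>0$ on the interior, comparing the two expressions and simplifying should reduce to
\begin{equation*}
b\left[(n-1)\lambda-(n+m-1)b\right]\ge0 .
\end{equation*}

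This inequality closes the argument. If $\lambda>0$ and $b<0$ the bracket is positive, so the product is negative, a contradiction; the case $\lambda<0$, $b>0$ is symmetric. Hence $b=0$, that is $\scal=(n-1)\lambda$ and $\Ric(\nabla w,\nabla w)\equiv0$. Equality must then hold in the Cauchy--Schwarz step, and since $b=0$ forces $\Tr\hat H=0$ we conclude $\hat H=0$; combined with the vanishing mixed terms, $\Hess w$ has rank one in the $\nabla w$ direction. Thus $E=\nabla w/|\nabla w|$ is parallel wherever $\nabla w\neq0$, its orthogonal distribution is parallel and totally geodesic, and $M$ splits isometrically as $\mathbb{R}\times N$; restricting the Einstein equation to $E^\perp$ and using $\hat H=0$ gives $\Ric|_{E^\perp}=\lambda g$, so $N$ is $\lambda$-Einstein.

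I anticipate the main difficulty to be twofold. The first is arranging the constants so that the two expressions for $|\Hess w|^2$ genuinely collapse to the single inequality above: the disappearance of the constant $C$ and of the $|\nabla w|^2$-terms is exactly what makes the scheme work, and it must be checked that the sign hypothesis on $\lambda$ is what selects the good root. The second is globalizing the splitting across the critical set of $w$, where $E$ is undefined; I would handle this by establishing the local product structure on the open dense set $\{\nabla w\neq0\}$ and extending it by completeness. The degenerate case of constant $w$, in which $M$ is simply $\lambda$-Einstein, is disposed of separately.
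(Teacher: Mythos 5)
Your proof is correct and follows essentially the same route as the paper: both arguments use the conservation law $\bar{k}w^{2}+|\nabla w|^{2}=\bar{\mu}$ coming from constant scalar curvature together with a Bochner formula to show that $\rho\bar{k}$ (your $b\cdot\frac{\lambda-b}{m}$) equals a non-negative multiple of the squared norm of the tangential block of $\mathrm{Hess}\,w$, which against the sign hypothesis forces $\rho=0$ and the degeneration of the Hessian to rank one, hence the splitting. The only cosmetic differences are that the paper runs the Bochner formula on the distance function $r$ rather than on $w$ and gets the contradiction directly from $|\mathrm{Hess}\,r|^{2}\geq 0$, so your Cauchy--Schwarz refinement $|\hat{H}|^{2}\geq(\mathrm{tr}\,\hat{H})^{2}/(n-1)$ is correct but not actually needed.
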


\begin{remark}
It turns out that the assumption about Ricci curvature is equivalent to the scalar curvature being bounded between $n\lambda$ and $(n-1)\lambda$. Thus this theorem can also be viewed as a generalization of Proposition 3.6 in \cite{CaseShuWei} which states that for $\lambda<0$ if the scalar curvature is constant then it is bounded below by $n\lambda$, and is equal to $n\lambda$ if and only if the metric is $\lambda$-Einstein.
\end{remark}

This theorem shows that only a limited number of rigid $(\lambda,n+m)$-Einstein metrics have vanishing sectional curvatures in the ``radial" direction. This is in contrast with the gradient soliton case where all rigid examples have vanishing radial curvatures. Our next theorem is a characterization of all rigid $(\lambda,n+m)$-Einstein metrics in terms of scalar curvature and the sectional curvatures in the radial direction.

\begin{theorem} \label{thmradialQ}
Suppose $(M,g,w)$ is a complete simply connected non-trivial $(\lambda,n+m)$-Einstein manifold with constant scalar curvature and $\lambda\neq0$. If the Riemann curvature tensor does not grow exponentially, and the radial sectional curvature satisfies
\begin{equation}
R(X,\nabla w,\nabla w,X)=\frac{|\nabla
w|^{2}}{m}\left(\lambda|X|^{2}-\mathrm{Ric}(X,X)\right),\label{eqnradialseccurvature}
\end{equation}
then the manifold is rigid.
\end{theorem}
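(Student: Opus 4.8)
The plan is to work on the open dense region where $\nabla w \neq 0$, to read \eqref{eqnradialseccurvature} as an equation governing the geometry of the level sets of $w$, and then to globalize. First I would record the consequences of constant scalar curvature. Applying the contracted second Bianchi identity to $\mathrm{Hess}\,w = \frac{w}{m}(\mathrm{Ric}-\lambda g)$ shows that $\nabla w$ is an eigenvector of $\mathrm{Ric}$ with the \emph{constant} eigenvalue $\rho = -\frac{\mathrm{scal}-(n-1)\lambda}{m-1}$, and that $|\nabla w|^{2} = a w^{2} + b$ with $a = \frac{\rho-\lambda}{m}$. Consequently the unit field $E = \nabla w/|\nabla w|$ satisfies $\nabla_{E}E = 0$, so its integral curves are unit-speed geodesics meeting the level sets $\Sigma_{t} = \{w = \text{const}\}$ orthogonally, and along such a geodesic $w$ obeys $w'' = a w$. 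Writing $\mathbf S$ for the shape operator of $\Sigma_{t}$, so that $\langle \mathbf S X, Y\rangle = \mathrm{Hess}\,w(X,Y)/|\nabla w|$ for $X,Y \perp E$, the Hessian equation gives $\mathbf S = \frac{w}{m|\nabla w|}\big(\mathrm{Ric}|_{T\Sigma} - \lambda I\big)$.

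The key observation is that, in operator form, \eqref{eqnradialseccurvature} says exactly $R_{E} = \frac{1}{m}(\lambda I - \mathrm{Ric}|_{T\Sigma})$ on $T\Sigma$, where $R_{E}(X) = R(X,E)E$; equivalently $R_{E} = -h\,\mathbf S$ with $h = |\nabla w|/w$. Substituting this into the radial Riccati equation $\nabla_{E}\mathbf S + \mathbf S^{2} + R_{E} = 0$ yields the closed equation
\[
\nabla_{E}\mathbf S + \mathbf S^{2} - h\,\mathbf S = 0
\]
along each radial geodesic. I would then linearize this matrix Riccati equation by the Jacobi substitution $\mathbf A' = \mathbf S\mathbf A$, $\mathbf A(0)=I$: a short computation collapses the quadratic term and produces the \emph{linear} equation $\mathbf A'' = h\,\mathbf A'$. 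Since $\int h = \log w$, this integrates explicitly to $\mathbf A'(t) = \frac{w(t)}{w(0)}\mathbf S_{0}$ and $\mathbf A(t) = I + \frac{\mathbf S_{0}}{w(0)}\int_{0}^{t}w$, where $\mathbf S_{0} = \mathbf S(0)$. Thus $\mathbf S(t)$, and with it the radial curvatures $R_{E} = -h\,\mathbf S$ and — through the Gauss equations together with the constancy of $\rho$ and $\mathrm{scal}$ — the full curvature tensor along the geodesic, are explicit functions of the single self-adjoint endomorphism $\mathbf S_{0}$ and of the solution $w$ of $w'' = aw$.

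The final step is an asymptotic analysis. Diagonalizing $\mathbf S_{0}$, each eigenvalue of $\mathbf S(t)$ is $\sigma_{i}(t) = w(t)s_{i}/\big(w(0) + s_{i}\int_{0}^{t}w\big)$, and I would show that unless the spectrum of $\mathbf S_{0}$ has the special form dictated by a product of Einstein manifolds — a totally geodesic block ($s_{i}=0$) together with an isotropic block — some component of the Riemann tensor grows exponentially in $t$ along the radial geodesics. The hypothesis that the curvature does not grow exponentially then forces this spectral condition, so that $\mathrm{Ric}$ has at most two eigenvalues with parallel eigendistributions: the line field $\mathbb R E$ together with the ``base'' block, and a complementary totally geodesic ``fiber'' block on which $\mathrm{Ric} = \lambda I$. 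Since $M$ is simply connected, the de Rham theorem splits $M$ as a Riemannian product of Einstein factors, and comparison with the explicit list in Section~2 identifies $M$ as one of the rigid models.

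I expect the main obstacle to be this last step: converting the qualitative hypothesis that the Riemann tensor does not grow exponentially into the precise spectral condition on $\mathbf S_{0}$ — that is, verifying that every choice of $\mathbf S_{0}$ compatible with \eqref{eqnradialseccurvature} but \emph{not} of product type forces some curvature component to grow exponentially along the radial geodesics — while simultaneously controlling the behavior at the closed critical set $\{\nabla w = 0\}$, where $E$ is undefined, so that the local splitting extends to a global one.
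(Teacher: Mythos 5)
Your setup is sound, and the Riccati linearization $\mathbf{A}''=h\mathbf{A}'$ is an elegant repackaging of the paper's radial ODE for $P$ (Lemma~\ref{lemradialQtwoeigenvalues}, via Proposition~\ref{propderivPQ}); but the two steps you lean on to close the argument both have gaps. First, the mechanism you propose for forcing the spectral condition on $\mathbf{S}_{0}$ does not work. From your own formulas $\sigma_{i}(t)=w(t)s_{i}/\bigl(w(0)+s_{i}\int_{0}^{t}w\bigr)$ and $R_{E}=-\frac{w'}{w}\mathbf{S}$, the radial curvatures stay \emph{bounded} for every admissible choice of $s_{i}$ (e.g.\ when $w=e^{\sqrt{-\bar{k}}r}$ each $\sigma_{i}\rightarrow\sqrt{-\bar{k}}$ and the corresponding radial curvature tends to $\bar{k}$), so ``no exponential growth'' rules out nothing at this stage, and there is a continuum of non-constant solutions with no finite-time blow-up. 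What actually pins down the eigenvalues is constant scalar curvature: $\mathrm{tr}\,\mathbf{S}=\frac{w}{mw'}\left(\mathrm{scal}-\rho-(n-1)\lambda\right)$ is a prescribed function of $t$, and equating it with $\sum_{i}\sigma_{i}(t)$ and using the linear independence of $\{1,z,z^{2},\dots\}$, $z=(w')^{-1}$, forces every $s_{i}$ to be one of the two constant solutions. That is exactly the paper's Lemma~\ref{lemradialQtwoeigenvalues}, and it uses no growth hypothesis.

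Second, and more seriously, the claim that the full Riemann tensor along a radial geodesic is an explicit function of $\mathbf{S}_{0}$ and $w$ is false. Condition (\ref{eqnradialseccurvature}) and the Riccati equation only control $R(\cdot,E)E$; the mixed components $R(X,U,Y,\nabla r)$ and $R(U,X,V,\nabla r)$ (with $X,Y$ in the $(\lambda-\rho)$-eigenspace of $P$ and $U,V$ in the $0$-eigenspace) are governed by the Codazzi equation and involve the connection coefficients $g(\nabla_{X}Y,U)$ and $g(\nabla_{U}V,X)$ --- data not contained in $\mathbf{S}_{0}$. Showing that these components vanish is precisely what makes the eigen-distributions parallel, and it is where essentially all of the work lies: the paper derives separate first-order ODEs for them (Proposition~\ref{propRadialCurvatureOperator}) and kills them using the boundary condition when $\bar{\mu}>0$ and the exponential-growth hypothesis when $\bar{\mu}=0$ (Lemmas~\ref{lemparallelmupositive} and~\ref{lemparallelmuzero}), the latter also requiring a Koszul-formula computation for $g(\nabla_{U}V,X)$ and an argument identifying the $\mathcal{P}$-leaves with hyperbolic space. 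Your proposal collapses this entire stage into the assertion that the Gauss equations determine everything, which they do not. Finally, the cases $w=\cos$ and $w=\cosh$, where $w$ has critical points, require the separate argument via the critical submanifold (Propositions~\ref{propNequaldim} and~\ref{propcriticalpoints} together with the fundamental equations along the normal exponential map); you flag this obstacle but do not address it.
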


\begin{remark}
We only need to assume the curvature growth condition in the case when the fiber in the warped product construction is Ricci flat.
\end{remark}

In \cite{HPWLcf} we defined two natural tensors, $P$ and $Q$, which
are modifications of the Ricci tensor and the Riemann curvature tensor.
The equation in (\ref{eqnradialseccurvature}) is equivalent to the
radial flatness of $Q$, i.e., $Q(X,\nabla w,\nabla w,X)=0$.

Theorem \ref{thmradialQ} also has the same corollary as the result
in the gradient Ricci soliton case. Note in the corollary below, we
do not need the curvature growth assumption.

\begin{corollary} \label{corharmonic}
If $(M,g,w)$ is a complete
simply connected non-trivial $(\lambda,n+m)$-Einstein manifold with
harmonic curvature, then it is rigid.
\end{corollary}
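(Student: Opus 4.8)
The plan is to verify that harmonic curvature forces the two hypotheses of Theorem \ref{thmradialQ}, namely constant scalar curvature and the radial identity (\ref{eqnradialseccurvature}), and then to explain why the curvature growth assumption of that theorem is not needed here. Recall that harmonic curvature means $\mathrm{div}\,R=0$, which by the contracted second Bianchi identity is equivalent to $\Ric$ being a Codazzi tensor,
\[
(\nabla_X\Ric)(Y,Z)=(\nabla_Y\Ric)(X,Z).
\]
Tracing this over $X$ and $Z$ and comparing with the contracted Bianchi identity $\mathrm{div}\,\Ric=\tfrac12\nabla\scal$ gives $\nabla\scal=0$, so the scalar curvature is automatically constant and the first hypothesis of Theorem \ref{thmradialQ} is free.

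The key step is to turn the Codazzi condition into the radial flatness of the tensor $Q$ from \cite{HPWLcf}. For a gradient Ricci soliton the soliton equation yields $(\nabla_X\Ric)(Y,Z)-(\nabla_Y\Ric)(X,Z)=-R(X,Y,Z,\nabla f)$, so there harmonic curvature forces $R(X,Y,Z,\nabla f)=0$ and in particular the vanishing of all radial sectional curvatures, which is exactly the hypothesis used in \cite{PWclassification}. I would establish the analogue for a $(\lambda,n+m)$-Einstein manifold by differentiating the defining equation $\Hess w=\tfrac{w}{m}(\Ric-\lambda g)$ and commuting the third derivatives of $w$: the Ricci identity then expresses $\tfrac{w}{m}\bigl((\nabla_X\Ric)(Y,Z)-(\nabla_Y\Ric)(X,Z)\bigr)$ as $R(X,Y,Z,\nabla w)$ minus correction terms built from $\Ric$, $g$ and $\nabla w$, and this combination is precisely $Q(X,Y,Z,\nabla w)$, which is why $Q$ was introduced in \cite{HPWLcf}. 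Since $\mathrm{div}\,R=0$ kills the Codazzi defect, we obtain $Q(X,Y,Z,\nabla w)=0$ for all $X,Y,Z$; specializing to $Q(X,\nabla w,\nabla w,X)=0$ and invoking the remark after Theorem \ref{thmradialQ} gives exactly (\ref{eqnradialseccurvature}).

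It remains to conclude, and here the two sign regimes of $\lambda$ are handled separately. When $\lambda=0$, constant scalar curvature alone suffices: Theorem \ref{thmuniquenesslambdazero} shows $(M,g)$ is either Ricci flat, hence $0$-Einstein and rigid, or isometric to $F\times[0,\infty)$ with $F$ Ricci flat, which is a product of Einstein factors and so rigid. When $\lambda\neq0$, the radial identity (\ref{eqnradialseccurvature}) has just been verified, so Theorem \ref{thmradialQ} applies once its growth hypothesis is addressed.

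The main obstacle is precisely the removal of that growth hypothesis, which by the remark following Theorem \ref{thmradialQ} is only needed when the warped product fiber is Ricci flat. I expect harmonic curvature to supply the missing control in this case: beyond the radial flatness already obtained, the Codazzi condition gives additional first-order relations for the curvature along the integral curves of $\nabla w$, and together with $Q(\cdot,\cdot,\cdot,\nabla w)=0$ these should force the relevant curvature components to be parallel in the radial direction rather than to grow. Showing that this radial analysis genuinely eliminates the growth assumption in the Ricci-flat-fiber case, rather than merely relaxing it, is the delicate point on which the corollary turns.
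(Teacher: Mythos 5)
Your reduction of harmonic curvature to constant scalar curvature and to the identity $Q(E_{1},E_{2},E_{3},\nabla w)=0$ for \emph{all} vector fields (via the Codazzi condition for $\mathrm{Ric}$ and Proposition \ref{propderivPQ}) matches the paper exactly, and your treatment of the $\lambda=0$ case through Theorem \ref{thmuniquenesslambdazero} is fine. The gap is in the final step: having obtained the full vanishing of $Q(\cdot,\cdot,\cdot,\nabla w)$, you discard most of it, keep only the radial restriction $Q(\nabla w,\cdot,\cdot,\nabla w)=0$, and then try to route the argument through Theorem \ref{thmradialQ}. That theorem carries a curvature growth hypothesis in the $\bar{\mu}=0$ (Ricci-flat fiber) case, and your last paragraph only expresses the expectation that harmonic curvature removes it; no argument is given. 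As written, the proof is incomplete precisely at the point where the corollary claims an improvement over Theorem \ref{thmradialQ}.

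The paper closes this gap by never invoking Theorem \ref{thmradialQ} at all: it uses the \emph{full} identity $Q(E_{1},E_{2},E_{3},\nabla w)=0$ rather than its radial trace. Once Lemma \ref{lemradialQtwoeigenvalues} (which needs only the radial version) gives the splitting $T\Sigma=\mathcal{N}\oplus\mathcal{P}$ into eigen-distributions of $P$ with eigenvalues $\lambda-\rho$ and $0$, one checks that for $X,Y\in\mathcal{N}$ and $U,V\in\mathcal{P}$ the Kulkarni--Nomizu correction terms in $Q$ vanish (using $P(\nabla w)=0$ and the orthogonality of the eigenspaces), so the full vanishing of $Q$ yields $R(X,U,Y,\nabla r)=0$ and $R(U,X,V,\nabla r)=0$ outright. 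Equations (\ref{eqnRXUX}) and (\ref{eqnRUXU}) of Proposition \ref{propRadialCurvatureOperator} identify these curvature components with nonzero multiples of $g(\nabla_{X}Y,U)$ and $g(\nabla_{U}V,X)$ (here $\bar{k}\neq0$ by Corollary \ref{cordiffrholambda}), so both distributions are totally geodesic, hence parallel; the de Rham splitting of the universal cover and the classification in Section 2 then give rigidity, with no ODE analysis along $\nabla r$ and no growth assumption anywhere. To repair your proof you should exploit the extra slots of $Q$ instead of only its radial restriction.
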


\smallskip{}

The paper is organized as follows. We classify the rigid examples
in Section 2. In Section 3, we derive some identities for the tensors
$P$ and $Q$ on $(\lambda,n+m)$-Einstein manifolds with constant
scalar curvature. In Section 4 we turn our attention to proving the
classification theorems. We first prove Theorems \ref{thmuniquenesslambdazero}
and \ref{thmscaln-1}. Theorem \ref{thmdim3}, the three dimensional
classification, is then an application of these two results. In Section
5 we discuss Theorem \ref{thmradialQ} and in Section 6 we construct
the non-rigid metrics on solvable Lie groups. Further investigations
in the homogeneous case will be the subject of future paper. In Appendix
A, we prove a result for the integrability of eigen-distribution of
Ricci tensor that is used in the proof of Theorem \ref{thmradialQ},
see Theorem \ref{thmRiccidistributionintegrable}.

\medskip{}

\textbf{Acknowledgment:} The major part of this paper was done
when the first author was a visiting scholar at Department of Math,
University of Pennsylvania. He would like to thank the university
for their hospitality.

\medskip{}

\section{Rigid examples}

In this section, we give a classification of rigid $(\lambda,n+m)$-Einstein
manifolds. First recall

\begin{definition}
A $(\lambda,n+m)$-Einstein manifold $(M,g,w)$
is called \emph{rigid} if it is Einstein or its universal cover is
a product of Einstein manifolds.
\end{definition}

In the gradient Ricci soliton case, if the metric splits, then each
factor is a gradient soliton with the same value of $\lambda$ and
none of them has to be trivial. However in the $(\lambda,n+m)$-Einstein
case if the metric splits, then there are at most two Einstein constants
and one factor is trivial.

\begin{lemma} \label{lemmetricsplit}
Suppose $(M,g,w)$ is a $(\lambda,n+m)$-Einstein
metric such that the metric splits as a product
\begin{equation*}
(M,g)=(M_{1},g_{1})\times(M_{2},g_{2})
\end{equation*}
Then one of the manifolds, say $(M_{1},g_{1})$, is a trivial $(\lambda,n+m)$-Einstein
manifold.
\end{lemma}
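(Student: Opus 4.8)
The plan is to exploit the block structure that a Riemannian product forces on every term of the defining equation. For the product $g = g_1 \oplus g_2$ both the metric and the Ricci tensor are block diagonal, $\mathrm{Ric} = \mathrm{Ric}_1 \oplus \mathrm{Ric}_2$, with no mixed components, so the right-hand side of $\mathrm{Hess}\,w = \frac{w}{m}(\mathrm{Ric} - \lambda g)$ is block diagonal and hence so is $\mathrm{Hess}\,w$. For $X$ tangent to $M_1$ and $Y$ tangent to $M_2$ one has $\nabla_X Y = 0$ in the product connection, so $\mathrm{Hess}\,w(X,Y) = X(Yw)$; vanishing of the mixed block gives $X(Yw) = 0$ for all such $X,Y$, which forces $w$ to separate as $w = w_1 + w_2$ with $w_i$ a function on $M_i$.

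Restricting the equation to each factor, and using $\mathrm{Hess}\,w = \mathrm{Hess}_{M_1} w_1 \oplus \mathrm{Hess}_{M_2} w_2$, I obtain
\begin{align*}
\mathrm{Hess}_{M_1} w_1 &= \tfrac{w_1 + w_2}{m}(\mathrm{Ric}_1 - \lambda g_1),\\
\mathrm{Hess}_{M_2} w_2 &= \tfrac{w_1 + w_2}{m}(\mathrm{Ric}_2 - \lambda g_2).
\end{align*}
The key observation is that the left side of the first equation depends only on the point of $M_1$, whereas the factor $w_1 + w_2$ varies with the point of $M_2$ as soon as $w_2$ is non-constant. Evaluating at two points of $M_2$ with distinct values of $w_2$ and subtracting forces $\mathrm{Ric}_1 = \lambda g_1$, and then $\mathrm{Hess}_{M_1} w_1 = 0$; symmetrically, $w_1$ non-constant gives $\mathrm{Ric}_2 = \lambda g_2$ and $\mathrm{Hess}_{M_2} w_2 = 0$. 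If either potential is already constant we are finished: if, say, $w_1$ is constant, then the first equation together with $w>0$ on $\mathrm{int}(M)$ gives $\mathrm{Ric}_1 = \lambda g_1$, so $(M_1,g_1)$ is $\lambda$-Einstein with constant potential, i.e.\ a trivial $(\lambda,n+m)$-Einstein manifold.

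It remains to rule out both $w_1$ and $w_2$ being non-constant, and this is the main obstacle. In that case $\mathrm{Ric}_i = \lambda g_i$ and $\mathrm{Hess}_{M_i} w_i = 0$ for both $i$, so $\nabla w$ is parallel on $M$, $|\nabla w|^2 \equiv c > 0$, $\Delta w = 0$, and $M$ is $\lambda$-Einstein. Here I would invoke the conserved quantity of the $(\lambda,n+m)$-equation, namely that $\mu = \lambda w^2 + w\,\Delta w + (m-1)|\nabla w|^2$ is constant (the Einstein constant of the fibre when $m$ is an integer; for $m=1$ one uses instead $\Delta w = -\lambda w$). Substituting gives $\mu = \lambda w^2 + (m-1)c$, which is constant only if $\lambda = 0$, since $w$ is non-constant; hence for $\lambda \neq 0$ this case is impossible and one factor is trivial. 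The genuinely delicate point is $\lambda = 0$, where an affine $w$ with parallel gradient satisfies both the Hessian equation and the conservation law. To close it I would use completeness with the boundary conditions: a non-constant parallel-gradient function is a true affine coordinate, so $w$ is unbounded below unless its generating line is cut off, forcing $\partial M = \emptyset$ to give $w$ constant (a contradiction) or else splitting off an $\mathbb{R}$-factor $[0,\infty)$ carrying $w$ linearly, with $w$ constant on the Ricci-flat complement, which is then the trivial factor. I expect the only remaining bookkeeping to be aligning this de Rham-type splitting with the given decomposition.
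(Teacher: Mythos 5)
Your first half is exactly the paper's own argument: the vanishing of the mixed block of $\mathrm{Hess}\,w$ gives $w=w_1+w_2$, and varying the $M_2$-point in the restricted equation shows that if $w_2$ is non-constant then $\mathrm{Ric}_1=\lambda g_1$ and $\mathrm{Hess}_{M_1}w_1=0$ (the paper phrases the same dichotomy contrapositively: if $M_2$ is not $\lambda$-Einstein then $w_1$ is constant and $M_1$ is trivial). Where you genuinely diverge is that you explicitly confront the residual case in which both $w_1$ and $w_2$ are non-constant, so that both factors are $\lambda$-Einstein and $\nabla w$ is parallel; the paper's two-line proof leaves this case implicit. Your treatment of it is correct in substance but can be streamlined at both ends. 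First, a non-zero parallel field satisfies $\mathrm{Ric}(\nabla w,\nabla w)=0$, which combined with $\mathrm{Ric}=\lambda g$ forces $\lambda=0$ directly, without invoking the conserved quantity $\mu$ (and this also covers $m=1$, where $\Delta w=-\lambda w$ together with $\Delta w=0$ gives the same conclusion). Second, for $\lambda=0$ you do not need any de Rham splitting of $M$ or the ``alignment'' step you defer: each $w_i$ is already a non-constant affine function on the complete factor $M_i$, so either some $\partial M_i\neq\emptyset$, say $\partial M_1$, in which case $w_1+w_2=0$ on $\partial M_1\times M_2$ forces $w_2$ to be constant, or both factors are boundaryless and $w_1$ is unbounded below along the complete geodesic generated by $\nabla w_1$, contradicting $w>0$ on $\mathrm{int}(M)$. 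In short, your proof is correct; its extra length buys a complete case analysis that the paper compresses, at the cost of a somewhat roundabout endgame that is easily tightened.
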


\begin{proof}
We write the $(\lambda,n+m)$-Einstein equation in the $(1,1)$-tensor
form:
\begin{equation*}
\mathrm{Ric}-\frac{m}{w}\nabla\nabla w=\lambda I.
\end{equation*}
The operator $E\mapsto\frac{m}{w}\nabla_{E}\nabla w$ preserves the
splitting of the metric $(M,g)=(M_{1}\times M_{2},g_{1}+g_{2})$.
Using the local coordinates $x^{j}$ and writing $\nabla w=\alpha^{j}\partial_{j}$,
we have $\nabla w=X_{1}+X_{2}$ where $X_{i}$ is a vector field on
$M_{i}$, $i=1,2$. Choose a fixed point $(p,q)\in M_{1}\times M_{2}$,
then on the submanifold $\left\{ p\right\} \times M_{2}$, we have
\begin{equation*}
m\nabla X_{2}(x_{2})={w(p,x_{2})}(\mathrm{Ric}_{g_{2}}-\lambda I).
\end{equation*}
Therefore if $(M_{2},g_{2})$ is not a $\lambda$-Einstein manifold,
then $w(p,x_{2})$ does not depend on the value of $p$, i.e., $X_{1}\equiv0,$
which implies $(M_{1},g_{1})$ is a trivial $(\lambda,n+m)$-Einstein
manifold.
\end{proof}

The non-trivial $(\lambda,n+m)$-Einstein manifolds which are also Einstein were classified in \cite{CaseShuWei} and extended to manifolds with boundary in \cite{HPWLcf}. The Einstein constant which is not $\lambda$ is given by
\begin{equation}
\rho=\frac{(n-1)\lambda-\mathrm{scal}}{m-1},\label{eqnrho}
\end{equation}
and all examples are listed in the next proposition.

\begin{proposition} \label{proplambdaEinsteinEinstein}
Suppose that
$(M,g,w)$ is a non-trivial $(\lambda,n+m)$-Einstein manifold which
is also Einstein, then up to multiples of $w$ and $g$, it is isometric
to one of the examples in Table \ref{TablelambdaEinsteinEinstein}.
\end{proposition}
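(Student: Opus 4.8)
The plan is to use the Einstein condition to reduce the $(\lambda,n+m)$-Einstein equation to a single concircular equation for $w$ and then apply the classical rigidity theory for such equations. Substituting $\Ric=\rho g$ into the defining equation (\ref{eqnlambdan+m}) gives $\Hess w=\frac{\rho-\lambda}{m}wg$, so $\Hess w=cwg$ with $c=\frac{\rho-\lambda}{m}$ a constant, and taking the trace gives $\lap w=ncw$. To pin down $c$ I would use that for any $(\lambda,n+m)$-Einstein metric the fiber Einstein constant $\mu=w\lap w+(m-1)|\nabla w|^{2}+\lambda w^{2}$ is constant. Differentiating $\Hess w=cwg$ yields $\nabla|\nabla w|^{2}=2cw\nabla w$, hence $|\nabla w|^{2}=cw^{2}+b$ for some constant $b$. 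Substituting $\lap w$ and $|\nabla w|^{2}$ into $\mu$ and demanding that the coefficient of $w^{2}$ vanish forces $(n+m-1)c+\lambda=0$, i.e. $c=-\frac{\lambda}{n+m-1}$ and $\rho=\frac{(n-1)\lambda}{n+m-1}$, consistent with (\ref{eqnrho}); note this makes the sign of $c$ opposite to the sign of $\lambda$.

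Next I would exploit the relation $|\nabla w|^{2}=cw^{2}+b$, which says $|\nabla w|$ is a function of $w$ alone and is exactly the condition that produces a warped product from the gradient flow of $w$. Away from the critical set of $w$ I introduce the arclength parameter $r$ along $\nabla w/|\nabla w|$, write $w=w(r)$ and $g=\df r^{2}+\varphi(r)^{2}g_{N}$ over a regular level set $N$, and read off from $\Hess w=cwg$ the two equations $w''=cw$ and $w'\varphi'=cw\varphi$. The second gives $\varphi'/\varphi=w''/w'$, so $\varphi=Cw'$: the warping function is a constant multiple of $w'$. Feeding $g=\df r^{2}+C^{2}(w')^{2}g_{N}$ back into $\Ric=\rho g$ then forces $(N,g_{N})$ to be Einstein, with Einstein constant determined by $b$.

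It remains to solve $w''=cw$ and assemble the global model, and here the case analysis is organized by the sign of $\lambda$. If $\lambda>0$ then $c<0$ and $w$ is a multiple of $\cos(kr)$; here $w'$ vanishes at an interior critical point, which must be a smooth pole and hence forces the cross-section to be a round sphere, while $w$ vanishes on a boundary level set, so the model is a geodesic cap in the round sphere. If $\lambda=0$ then $c=0$, $w$ is affine and $\varphi$ constant, giving the product $[0,\infty)\times N$ with $N$ Ricci flat and $w$ linear, recovering the description in Theorem~\ref{thmuniquenesslambdazero}. If $\lambda<0$ then $c>0$ and, according to the sign of $b$, $w$ is built from $\cosh(kr)$ (a pole, giving hyperbolic space in geodesic polar coordinates), $e^{kr}$ (no critical point and no boundary, giving hyperbolic space in horospherical coordinates with $N$ flat), or $\sinh(kr)$ (a boundary level set where $w$ vanishes, giving a half-space warped model). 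Reading off $(w,g,\mu)$ in each completion produces the entries of Table~\ref{TablelambdaEinsteinEinstein}.

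The step I expect to be the main obstacle is not the local ODE analysis but the global and boundary bookkeeping: one must carefully distinguish zeros of $w'$ (critical points of $w$, which require a smoothness/closing-up condition that pins down the cross-section) from zeros of $w$ (which must lie on $\partial M$), and then use completeness together with Obata/Tashiro-type rigidity to guarantee that each regular warped piece extends to exactly one of the standard models rather than to an incomplete or singular fragment. Verifying that $(N,g_{N})$ is Einstein with the correct constant and that the metric closes up smoothly across poles is where the real work lies; the explicit finite list then follows by enumerating the completions compatible with $w>0$ on the interior and $w=0$ on $\partial M$.
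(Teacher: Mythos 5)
The paper itself does not prove this proposition --- it defers entirely to the classifications in [CSW] and [HPW1] --- and your reduction to the concircular equation $\Hess w=cwg$ with $c=\frac{\rho-\lambda}{m}$, the first integral $|\nabla w|^{2}=cw^{2}+b$, the determination $\rho=\frac{(n-1)\lambda}{n+m-1}$ (equivalently via the contracted Bianchi identity applied to $\Hess w=cwg$, without invoking the $\mu$-identity), and the Tashiro-type warped-product splitting $g=\df r^{2}+C^{2}(w')^{2}g_{N}$ is exactly the standard route taken in those references, so the argument is essentially correct. Two small points deserve care: first, when $c\neq0$ every critical point of $w$ is nondegenerate (since $\Hess w=cw\,g\neq0$ there), hence an isolated pole, which is what legitimizes the ``smooth pole forces a round-sphere cross-section'' step in the $\cos$ and $\cosh$ cases; second, in the $\lambda<0$, $b=0$ case the fiber is only forced to be Ricci flat, so the model $\df r^{2}+e^{2r}g_{F}$ is Einstein but not hyperbolic space in general (and likewise the $\sinh$ model has an arbitrary negative-Einstein cross-section $N$), matching rows 8--9 of Table \ref{TablelambdaEinsteinEinstein} rather than the space forms your description suggests.
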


\begin{table}[!h]

\begin{centering}
\begin{tabular}{|c|c|c|c|c|c|}
\hline
$M$  & $g$  & $w$  & $\lambda$  & $\rho$  & $\mu$ \tabularnewline
\hline
\hline
$[-\frac{\pi}{2},\frac{\pi}{2}]$  & $\mathrm{d}r^{2}$  & $w(r)=\cos(r)$  & $m$  & $0$  & $m-1$ \tabularnewline
\hline
$[0,\infty)$  & $\mathrm{d}r^{2}$  & $w(r)=r$  & $0$  & $0$  & $m-1$ \tabularnewline
\hline
$[0,\infty)$  & $\mathrm{d}r^{2}$  & $w(r)=\sinh(r)$  & $-m$  & $0$  & $m-1$ \tabularnewline
\hline
$(-\infty,\infty)$  & $\mathrm{d}r^{2}$  & $w(r)=e^{r}$  & $-m$  & $0$  & $0$ \tabularnewline
\hline
$(-\infty,\infty)$  & $\mathrm{d}r^{2}$  & $w(r)=\cosh(r)$  & $-m$  & $0$  & $-(m-1)$ \tabularnewline
\hline
\hline
$\mathbb{D}^{n}$  & $\mathrm{d}r^{2}+\sin^{2}(r)g_{\mathbb{S}^{n-1}}$  & $w(r)=\cos(r)$  & $n+m-1$  & $n-1$  &
$m-1$ \tabularnewline
\hline
$[0,\infty)\times F$  & $\mathrm{d}r^{2}+g_{F}$  & $w(r)=r$  & $0$  & $0$  & $m-1$ \tabularnewline
\hline
$[0,\infty)\times N$  & $\mathrm{d}r^{2}+\cosh^{2}(r)g_{N}$  & $w(r)=\sinh(r)$  & $-(n+m-1)$  & $-(n-1)$  &
$m-1$ \tabularnewline
\hline
$(-\infty,\infty)\times F$  & $\mathrm{d}r^{2}+e^{2r}g_{F}$  & $w(r)=e^{r}$  & $-(n+m-1)$  & $-(n-1)$  & $0$
\tabularnewline
\hline
$\mathbb{H}^{n}$  & $\mathrm{d}r^{2}+\sinh^{2}(r)g_{\mathbb{S}^{n-1}}$  & $w(r)=\cosh(r)$  & $-(n+m-1)$  &
$-(n-1)$  & $-(m-1)$ \tabularnewline
\hline
\end{tabular}
\par
\end{centering}
\smallskip{}
\caption{Non-trivial $(\lambda,n+m)$-Einstein manifolds that are also Einstein.
Here $\mathbb{S}^{n-1}$ has Ricci curvature $n-2$, $F$ is Ricci
flat and $N$ has Ricci curvature $-(n-2)$.}
\label{TablelambdaEinsteinEinstein}
\end{table}

\begin{remark}
The first five $(\lambda,1+m)$-Einstein structures in dimension one can be viewed as degenerate cases of the last five examples. For instance, in Example 8 if $N$ is a point, then we have
Example 3.
\end{remark}

Combining these two results allows us to easily classify all rigid $(\lambda,n+m)$-Einstein manifolds.

\begin{proposition} \label{proprigidnontrivial}
A non-trivial complete rigid $(\lambda,n+m)$-Einstein manifold $(M,g,w)$ is one of the examples
in Table \ref{TablelambdaEinsteinEinstein}, or its universal cover $\widetilde{M}$ splits as
\begin{eqnarray*}
\widetilde{M} & = & (M_{1},g_{1})\times(M_{2},g_{2})\\
w & = & (c,w_{2}),
\end{eqnarray*}
where $c$ is a constant, $(M_{1},g_{1})$ is a trivial $(\lambda,n+m)$-Einstein
manifold and $(M_{2},g_{2},w_{2})$ is one of the examples in Table
\ref{TablelambdaEinsteinEinstein}.
\end{proposition}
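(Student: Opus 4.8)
The plan is to argue directly from the definition of rigidity, which splits into two cases, and to treat the genuinely reducible case using Lemma~\ref{lemmetricsplit} together with the Einstein classification in Proposition~\ref{proplambdaEinsteinEinstein}. If $(M,g,w)$ is itself Einstein, then since it is non-trivial I would apply Proposition~\ref{proplambdaEinsteinEinstein} directly to conclude that it is, up to scaling $w$ and $g$, one of the examples in Table~\ref{TablelambdaEinsteinEinstein}; this gives the first alternative. So assume instead that $(M,g,w)$ is not Einstein but its universal cover $\widetilde{M}$ is a product of Einstein manifolds. Since the $(\lambda,n+m)$-Einstein equations are local, the pullback of $w$ makes $\widetilde{M}$ a simply connected $(\lambda,n+m)$-Einstein manifold, and it suffices to produce the desired splitting on $\widetilde{M}$. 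Write $\widetilde{M}=(N_1,h_1)\times\cdots\times(N_k,h_k)$ with each factor Einstein; since $M$ is not Einstein we have $k\geq 2$, and since $(M,g,w)$ is non-trivial the pulled-back $w$ is non-constant.

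The key step is to show that $w$ is non-constant on exactly one of the factors. I would argue this by contradiction: if $w$ were non-constant on two distinct factors $N_a$ and $N_b$, then regard $\widetilde{M}$ as the two-factor product $N_a\times\bigl(\prod_{i\neq a}N_i\bigr)$ and apply Lemma~\ref{lemmetricsplit}. Because $w$ is non-constant on $N_b$, which sits inside the second factor, that factor is not a trivial $(\lambda,n+m)$-Einstein manifold, so the lemma forces $N_a$ to be trivial, i.e. $w$ is constant on $N_a$ --- a contradiction. Hence, after relabeling, $w$ depends only on the $N_1$-coordinate, so $w=w_2\circ\pi_1$ for a non-constant function $w_2$ on $N_1$, and $w$ is constant, equal to some $c>0$ on the interior, along $M_1:=\prod_{i\geq 2}N_i$.

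It then remains to identify the two factors of the splitting $\widetilde{M}=M_1\times N_1$. Restricting the equation to directions tangent to $M_1$: since $w$ does not depend on the $M_1$-coordinates and the product (Levi-Civita) connection preserves the factors, the Hessian of $w$ vanishes in these directions, so $\Ric=\lambda g$ on $M_1$; thus $(M_1,g_1)$ is $\lambda$-Einstein with $w\equiv c$, a trivial $(\lambda,n+m)$-Einstein manifold. Restricting instead to directions tangent to $N_1$, the block-diagonal form of the Ricci tensor and of the Hessian of a Riemannian product shows that $\Hess_{h_1}w_2=\frac{w_2}{m}(\Ric_{h_1}-\lambda h_1)$, so $(N_1,h_1,w_2)$ is a non-trivial $(\lambda,n+m)$-Einstein manifold which is moreover Einstein; Proposition~\ref{proplambdaEinsteinEinstein} then places it, up to scaling, in Table~\ref{TablelambdaEinsteinEinstein}. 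Setting $M_2=N_1$ yields the stated decomposition with $w=(c,w_2)$.

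The main obstacle is the isolation step --- ruling out that the potential varies nontrivially across several Einstein factors simultaneously --- but Lemma~\ref{lemmetricsplit} reduces it to the short pairwise argument above. The remaining verifications, that the equation restricts to a $(\lambda,n+m)$-Einstein structure on each factor, are routine once one uses that the Levi-Civita connection and Ricci tensor of a Riemannian product respect the splitting.
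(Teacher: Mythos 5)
Your proposal is correct and follows essentially the same route as the paper: split on whether $M$ is Einstein, apply Proposition \ref{proplambdaEinsteinEinstein} in the Einstein case, and otherwise use Lemma \ref{lemmetricsplit} to force $w$ to be constant on all but one Einstein factor, which is then both Einstein and $(\lambda,n+m)$-Einstein and hence in Table \ref{TablelambdaEinsteinEinstein}. The only difference is that you spell out the pairwise application of Lemma \ref{lemmetricsplit} needed when the universal cover has more than two factors and verify the restricted equations explicitly, details the paper's one-paragraph proof leaves implicit.
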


\begin{proof}
If $M$ is Einstein, then by Proposition \ref{proplambdaEinsteinEinstein}
it is one of the examples in Table \ref{TablelambdaEinsteinEinstein}.
Otherwise the metric splits as a product of Einstein manifolds and
Lemma \ref{lemmetricsplit} implies that the potential function $w$
also splits and one of the factors, $M_{1}$ have constant potential
function and Ricci curvature $\lambda$. Now $M_{2}$ is both Einstein
and $(\lambda,n+m)$-Einstein and so applying Proposition \ref{proplambdaEinsteinEinstein}
again shows that $M_{2}$ is in Table \ref{TablelambdaEinsteinEinstein}.
\end{proof}
\medskip{}

\section{Preliminaries}

In this section we collect the formulas from \cite{KimKim}, \cite{CaseShuWei}
and \cite{HPWLcf} which we will use later in the proof of the theorems.
We apply some of these identities to give a classification of the
possible forms of $w$ on a $(\lambda,n+m)$-Einstein manifold with
$m>1$ and constant scalar curvature. Finally we derive some properties
about the critical point set of $w$.

Recall from \cite{HPWLcf} that for a $(\lambda,n+m)$-Einstein manifold
with $m\neq1$, we define
\begin{eqnarray}
\rho(x) & = & \frac{(n-1)\lambda-\mathrm{scal}}{m-1}\notag\\
P & = & \mathrm{Ric}-\rho g\notag\\
Q & = & R+\frac{2}{m}P\odot g+\frac{\rho-\lambda}{m}g\odot g\notag
\end{eqnarray}
where $R$ is the (0,4) Riemann curvature tensor, and for any two
symmetric (0,2)-tensor $s$ and $r$, $s\odot r$ is the Kulkarni-Nomizu
product defining a (0,4)-tensor
\begin{eqnarray*}
(s\odot r)(X,Y,Z,W) & = & \,\frac{1}{2}\left(r(X,W)s(Y,Z)+r(Y,Z)s(X,W)\right)\\
&  & -\frac{1}{2}\left(r(X,Z)s(Y,W)+r(Y,W)s(X,Z)\right).
\end{eqnarray*}
Note that we use the convention for the Riemann tensor that makes
$R\left(X,Y,Y,X\right)$ have the same sign as the sectional curvature
of the plane spanned by $X$ and $Y$. The Kulkarni-Nomizu tensor
is defined to be consistent with this choice.

Up to a dimensional constant, $P$ can be viewed as the Ricci tensor
associated with $Q$, i.e., if $\left\{ E_{i}\right\} _{i=1}^{n}$
is an orthonormal frame, then
\begin{equation}
\sum_{i=1}^{n}Q(X,E_{i},E_{i},Y)=\frac{n+m-2}{m}P(X,Y).\label{eqntraceQ}
\end{equation}
Also note that the trace of $P$ and $\rho$ are related by the equation
\begin{equation}
\mathrm{tr}P=(n-1)\lambda-(n+m-1)\rho.\label{trrholam}
\end{equation}

When the scalar curvature is constant, $\rho$ is constant on $M$
and $P$ is just the Ricci tensor shifted by a constant multiple of
the metric. In this case the formulas in \cite{CaseShuWei} and \cite{HPWLcf}
simplify significantly. The first set of formulas we will need involve
$P$ and the derivatives of the scalar curvature. Recall that for
a $(\lambda,n+m)$-Einstein manifold with $m>1$,
\begin{eqnarray*}
\frac{w}{2}\nabla\rho & = & P(\nabla w)\\
\frac{1}{2}\Delta(\mathrm{scal})+\frac{m+1}{2w}g\left(\nabla\mathrm{scal},\nabla w\right) & = &
(\lambda-\rho)\mathrm{tr}(P)-|P|^{2}\\
\mathrm{div}(w^{m+1}P) & = & 0.
\end{eqnarray*}
The first two equations are just formulas (3.11) and (3.12) in \cite{CaseShuWei}
rewritten in our notation, for the third identity see Proposition
5.6 in \cite{HPWLcf}. When the scalar curvature is constant these
identities give us the following formulas.

\begin{proposition} \label{propCSWconstantscal}
Let $(M,g,w)$ be
a $(\lambda,n+m)$-Einstein manifold with constant scalar curvature
and $m>1$, then
\begin{eqnarray*}
P(\nabla w) & = & 0\\
|P|^{2} & = & (\lambda-\rho)\mathrm{tr}P={\normalcolor \mathrm{constant}}.\\
\mathrm{div}(P) & = & 0.
\end{eqnarray*}
\end{proposition}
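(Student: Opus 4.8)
The plan is to substitute the hypothesis of constant scalar curvature directly into the three displayed identities that precede the statement. The crucial preliminary observation is that, because $\rho=\frac{(n-1)\lambda-\mathrm{scal}}{m-1}$ and $\mathrm{scal}$ is constant, the function $\rho$ is constant on $M$, so $\nabla\rho=0$; moreover, together with $\lambda$ constant, the identity (\ref{trrholam}) shows $\mathrm{tr}P=(n-1)\lambda-(n+m-1)\rho$ is also constant.

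With this in hand the first two conclusions are immediate. The first identity $\frac{w}{2}\nabla\rho=P(\nabla w)$ has vanishing left-hand side since $\nabla\rho=0$, so $P(\nabla w)=0$ everywhere on $M$. For the second conclusion I would feed $\nabla\mathrm{scal}=0$ and $\Delta(\mathrm{scal})=0$ into the second identity, which then collapses to $0=(\lambda-\rho)\mathrm{tr}(P)-|P|^{2}$; hence $|P|^{2}=(\lambda-\rho)\mathrm{tr}P$, and this common value is constant because both $\rho$ and $\mathrm{tr}P$ are.

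For the last conclusion I would expand the third identity using the product rule for the divergence of a function times a symmetric tensor, namely $\mathrm{div}(w^{m+1}P)=w^{m+1}\mathrm{div}(P)+P(\nabla(w^{m+1}))$. Since $\nabla(w^{m+1})=(m+1)w^{m}\nabla w$ and $P(\nabla w)=0$ from the first step, the second term vanishes, leaving $w^{m+1}\mathrm{div}(P)=0$. I do not anticipate a serious obstacle, as the whole argument is a chain of substitutions into already-established identities; the only place needing care is precisely this final deduction. One should not simply divide by $w^{m+1}$, which vanishes on $\partial M$, but instead conclude $\mathrm{div}(P)=0$ on $\mathrm{int}(M)$, where $w>0$, and then extend to all of $M$ by continuity of $\mathrm{div}(P)$ and density of the interior.
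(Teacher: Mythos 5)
Your proof is correct and follows essentially the same route as the paper: substitute constancy of $\mathrm{scal}$ (hence of $\rho$ and $\mathrm{tr}P$) into the three displayed identities, and use $P(\nabla w)=0$ to kill the extra term in $\mathrm{div}(w^{m+1}P)=0$. Your extra care at the boundary, where $w=0$ prevents direct division by $w^{m+1}$, is a small refinement the paper leaves implicit, but the argument is otherwise identical.
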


\begin{proof}
The first two are obvious from the equations above because $\mathrm{scal}$,
$\rho$, and $\mathrm{tr}P$ are all constant. For the third fact,
note that $\mathrm{div}(w^{m+1}P)=0$ is equivalent to
\begin{equation*}
\mathrm{div}(P)=-\frac{(m+1)}{w}P(\nabla w).
\end{equation*}
\end{proof}

There are two important corollaries of these formulas which we will
find useful.

\begin{corollary}[\cite{CaseShuWei}, Proposition 3.6] \label{cornrhonlambda}
Let $(M,g,w)$ be a $(\lambda,n+m)$-Einstein manifold with constant
scalar curvature and $m>1$, assume in addition that $\lambda\ne0$,
then the scalar curvature is bounded by $n\lambda$ and $n\rho$.
Moreover if $\mathrm{scal}=n\lambda$ or $n\rho$, then the manifold
is Einstein.
\end{corollary}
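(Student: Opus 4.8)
The plan is to deduce everything from the identity $|P|^{2}=(\lambda-\rho)\,\mathrm{tr}P$ in Proposition \ref{propCSWconstantscal}, combined with the elementary pointwise inequality $|P|^{2}\ge\frac{1}{n}(\mathrm{tr}P)^{2}$, which holds for any symmetric $(0,2)$-tensor and is saturated exactly when $P$ is a multiple of $g$. First I would record the algebraic relation between $\mathrm{tr}P$ and the scalar curvature: taking the trace of $P=\mathrm{Ric}-\rho g$ gives $\mathrm{tr}P=\mathrm{scal}-n\rho$, consistent with \eqref{trrholam} and \eqref{eqnrho}. Since $\rho$ and $\mathrm{tr}P$ are constants under the constant-scalar-curvature hypothesis, the whole argument takes place at a single point.

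Next I would substitute the identity $|P|^{2}=(\lambda-\rho)\,\mathrm{tr}P$ into the inequality $|P|^{2}\ge\frac{1}{n}(\mathrm{tr}P)^{2}$ to obtain
\[
\mathrm{tr}P\bigl(n(\lambda-\rho)-\mathrm{tr}P\bigr)\ge0 .
\]
Replacing $\mathrm{tr}P$ by $\mathrm{scal}-n\rho$ and simplifying $n(\lambda-\rho)-\mathrm{tr}P=n\lambda-\mathrm{scal}$ turns this into
\[
(\mathrm{scal}-n\rho)(n\lambda-\mathrm{scal})\ge0,
\]
which is precisely the statement that $\mathrm{scal}$ lies between $n\rho$ and $n\lambda$. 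For the equality cases I would argue directly. If $\mathrm{scal}=n\rho$ then $\mathrm{tr}P=0$, so $|P|^{2}=(\lambda-\rho)\,\mathrm{tr}P=0$ and hence $\mathrm{Ric}=\rho g$. If instead $\mathrm{scal}=n\lambda$ then $\mathrm{tr}P=n(\lambda-\rho)$, so $|P|^{2}=n(\lambda-\rho)^{2}=\frac{1}{n}(\mathrm{tr}P)^{2}$, forcing equality in the trace inequality; thus $P=(\lambda-\rho)g$ and $\mathrm{Ric}=\lambda g$. In either case $M$ is Einstein.

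I do not expect a genuine obstacle here: once Proposition \ref{propCSWconstantscal} is available the proof is a one-line combination of an identity and Cauchy--Schwarz, and the only real step is recognizing that this combination is a quadratic inequality in $\mathrm{scal}$ whose roots are exactly $n\rho$ and $n\lambda$. The hypothesis $\lambda\ne0$ is not used in the inequality itself; it serves only to fix the sign conventions used to locate $\mathrm{scal}$, in the spirit of the remark following Theorem \ref{thmscaln-1}.
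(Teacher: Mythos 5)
Your proposal is correct and is essentially the paper's own argument: the paper completes the square, writing $\left|P-\frac{\mathrm{tr}P}{n}g\right|^{2}=\frac{1}{n}(\mathrm{scal}-n\rho)(n\lambda-\mathrm{scal})$, which is exactly your combination of $|P|^{2}=(\lambda-\rho)\mathrm{tr}P$ with the Cauchy--Schwarz inequality $|P|^{2}\ge\frac{1}{n}(\mathrm{tr}P)^{2}$, and the equality analysis is the same. Your write-up of the two equality cases is in fact more explicit than the paper's ``then the statement follows easily.''
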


\begin{proof}
From the equation $|P|^{2}=(\lambda-\rho)\mathrm{tr}P$, we have
\begin{equation*}
\left|P-\frac{\mathrm{tr}P}{n}g\right|^{2}=(\mathrm{tr}P)\left(\lambda-\frac{\mathrm{scal}}{n}\right)= -\frac{1}{n}(\mathrm{scal}-n\rho)(n\lambda-\mathrm{scal}).
\end{equation*}
Then the statement follows easily.
\end{proof}

\begin{corollary} \label{kbarsign} \label{cordiffrholambda}
Let
$(M,g,w)$ be a $(\lambda,n+m)$-Einstein manifold with constant scalar
curvature and $m>1$. If $\lambda>0$($<0$), then $\lambda-\rho>0$($<0$).
\end{corollary}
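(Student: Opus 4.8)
The plan is to leverage the two facts already established: Corollary \ref{cornrhonlambda}, which says the scalar curvature lies between $n\lambda$ and $n\rho$, and the defining relation $\rho=\frac{(n-1)\lambda-\mathrm{scal}}{m-1}$, equivalently $\mathrm{scal}=(n-1)\lambda-(m-1)\rho$. The key observation is that the bound on $\mathrm{scal}$ alone is symmetric in $\lambda$ and $\rho$ and so cannot by itself decide which is larger; the asymmetry that fixes the sign of $\lambda-\rho$ comes from substituting the definition of $\rho$ back into that bound. I would argue by contradiction, handling $\lambda>0$ and $\lambda<0$ symmetrically (note $\lambda\neq0$, so Corollary \ref{cornrhonlambda} applies).

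Take the case $\lambda>0$ and suppose, for contradiction, that $\lambda-\rho\le 0$, i.e. $\rho\ge\lambda>0$. Then $n\rho\ge n\lambda$, so Corollary \ref{cornrhonlambda} places $\mathrm{scal}$ in the interval $[n\lambda,n\rho]$; in particular $\mathrm{scal}\ge n\lambda$. On the other hand, since $m-1>0$ and $\rho\ge\lambda$, the identity $\mathrm{scal}=(n-1)\lambda-(m-1)\rho$ gives
\[
\mathrm{scal}\le(n-1)\lambda-(m-1)\lambda=(n-m)\lambda .
\]
Combining the two estimates yields $n\lambda\le(n-m)\lambda$, hence $m\lambda\le 0$, contradicting $m>1$ and $\lambda>0$. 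Thus $\lambda-\rho>0$. The case $\lambda<0$ is identical with inequalities reversed: assuming $\rho\le\lambda<0$ forces $\mathrm{scal}\le n\lambda$, while the same identity gives $\mathrm{scal}\ge(n-m)\lambda$, again producing $m\lambda\ge 0$, a contradiction.

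The computations are entirely routine; the only real content is the decision to combine Corollary \ref{cornrhonlambda} with the definition of $\rho$, rather than trying to read off the sign from $|P|^{2}=(\lambda-\rho)\mathrm{tr}P\ge 0$. That inequality merely shows $\lambda-\rho$ and $\mathrm{tr}P$ share a sign and is therefore insufficient on its own, so recognizing that one must break the $\lambda\leftrightarrow\rho$ symmetry is the main (and essentially the only) obstacle. As a remark, the same ingredients give a quantitative statement: writing the scalar-curvature bound in the form $\bigl[(n+m-1)(\lambda-\rho)-m\lambda\bigr]\bigl[m\lambda-(m-1)(\lambda-\rho)\bigr]\ge 0$ shows that $\lambda-\rho$ lies between $\frac{m\lambda}{n+m-1}$ and $\frac{m\lambda}{m-1}$, so in fact $|\lambda-\rho|\ge\frac{m|\lambda|}{n+m-1}>0$, which recovers the sign assertion at once.
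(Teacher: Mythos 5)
Your proof is correct, and it does not follow the paper's own route. The paper argues by contradiction directly from the identity $|P|^{2}=(\lambda-\rho)\,\mathrm{tr}P$ together with the trace formula $\mathrm{tr}P=(n-1)\lambda-(n+m-1)\rho$: assuming $\lambda<0$ and $\rho\leq\lambda$, it deduces $\mathrm{tr}P>0$ and then asserts $|P|^{2}=(\lambda-\rho)\mathrm{tr}P\leq0$, forcing $P=0$ and $\rho=\lambda$, which is then ruled out by Table \ref{TablelambdaEinsteinEinstein}. As you correctly point out in your closing remark, the bare identity only shows that $\lambda-\rho$ and $\mathrm{tr}P$ share a sign; under the hypothesis $\rho\leq\lambda<0$ both factors are in fact nonnegative, so the displayed inequality $\leq0$ does not follow as written, and the missing input is precisely the Cauchy--Schwarz refinement $n|P|^{2}\geq(\mathrm{tr}P)^{2}$, i.e.\ Corollary \ref{cornrhonlambda}. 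Your argument supplies exactly that: combining the bound that $\mathrm{scal}$ lies between $n\rho$ and $n\lambda$ with $\mathrm{scal}=(n-1)\lambda-(m-1)\rho$ breaks the $\lambda\leftrightarrow\rho$ symmetry and yields the clean contradiction $m\lambda\leq0$. In effect you have written the completed version of the paper's proof, built from the same two ingredients (the identity $|P|^{2}=(\lambda-\rho)\mathrm{tr}P$, which underlies Corollary \ref{cornrhonlambda}, and the definition of $\rho$), while avoiding the sign slip in the paper's text. Your quantitative remark that $\lambda-\rho$ lies between $\frac{m\lambda}{n+m-1}$ and $\frac{m\lambda}{m-1}$ is a genuine bonus and is sharp: in the rigid examples of Table \ref{TablelambdaEinsteinEinstein} one has $\lambda-\rho=\frac{m\lambda}{n+m-1}$ exactly.
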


\begin{proof}
Suppose that $\lambda<0$ and $\rho\leq\lambda$, then by (\ref{trrholam}),
$\mathrm{tr}(P)>0$. Then the formula
\begin{equation*}
|P|^{2}=(\lambda-\rho)\mathrm{tr}P\leq0
\end{equation*}
shows that $M$ is Einstein and $\rho=\lambda$. However, this case
never occurs in Table \ref{TablelambdaEinsteinEinstein} with $m>1$
so we have a contradiction.

A similar argument shows $\rho-\lambda<0$ if $\lambda>0$. In fact,
constant scalar curvature is not necessary for the conclusion in the
$\lambda>0$ case, see Proposition 5.4 in \cite{HPWLcf}.
\end{proof}

The other set of formulas we will use involve the covariant derivatives
of the tensors $P$ and $Q$, and are proven as Proposition 6.2 in
\cite{HPWLcf}.

\begin{equation}
\frac{w}{m}\left((\nabla_{X}P)(Y,Z)-(\nabla_{Y}P)(X,Z)\right)=-Q(X,Y,Z,\nabla w)-\frac{1}{m}(g\odot
g)\left(X,Y,Z,P(\nabla w)\right)\label{symderivP}
\end{equation}

In the case of constant scalar curvature these give us the following
identities.

\begin{proposition} \label{propderivPQ}
Suppose $(M,g,w)$ is a
$(\lambda,n+m)$-Einstein manifold with constant scalar curvature
and $m>1$, then
\begin{eqnarray*}
\frac{w}{m}\left((\nabla_{X}P)(Y,Z)-(\nabla_{Y}P)(X,Z)\right) & = &
\frac{w}{m}\left((\nabla_{X}\mathrm{Ric})(Y,Z)-(\nabla_{Y}\mathrm{Ric})(X,Z)\right)=-Q(X,Y,Z,\nabla w),\\
\frac{w}{m}(\nabla_{\nabla w}P)(X,Y) & = &
-\left(\frac{w}{m}\right)^{2}(\lambda-\rho)P(X,Y)+\left(\frac{w}{m}\right)^{2}g(P(X),P(Y))\\
&  & +Q(\nabla w,X,Y,\nabla w).
\end{eqnarray*}
\end{proposition}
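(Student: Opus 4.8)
The plan is to read off both identities from the single formula (\ref{symderivP}) together with the three facts in Proposition \ref{propCSWconstantscal}. First I would dispense with the first identity. Since the scalar curvature is constant, $\rho$ is constant, so $P=\mathrm{Ric}-\rho g$ differs from $\mathrm{Ric}$ by a parallel tensor and hence $\nabla P=\nabla\mathrm{Ric}$; this gives the first equality. For the second equality I substitute $P(\nabla w)=0$ into (\ref{symderivP}): the Kulkarni--Nomizu term $\frac{1}{m}(g\odot g)(X,Y,Z,P(\nabla w))$ vanishes identically, leaving exactly $-Q(X,Y,Z,\nabla w)$.

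For the second identity I would specialize the first identity by placing $\nabla w$ in one of the first two slots. Setting $X=\nabla w$ gives
\begin{equation*}
\frac{w}{m}(\nabla_{\nabla w}P)(Y,Z)=\frac{w}{m}(\nabla_{Y}P)(\nabla w,Z)-Q(\nabla w,Y,Z,\nabla w),
\end{equation*}
so the whole problem reduces to evaluating the mixed term $(\nabla_{Y}P)(\nabla w,Z)$. Here I use $P(\nabla w)=0$ again, but now differentiated: applying $\nabla_{Y}$ to the identically vanishing one-form $Z\mapsto P(\nabla w,Z)$ and using the product rule gives $(\nabla_{Y}P)(\nabla w,Z)=-P(\nabla_{Y}\nabla w,Z)$. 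Then I insert the Hessian equation in its $(1,1)$ form, $\nabla_{Y}\nabla w=\frac{w}{m}(\mathrm{Ric}(Y)-\lambda Y)=\frac{w}{m}(P(Y)-(\lambda-\rho)Y)$, and use that $P$ is self-adjoint to rewrite $P(P(Y),Z)=g(P(Y),P(Z))$. This produces the quadratic term $g(P(X),P(Y))$ and the linear term $(\lambda-\rho)P(X,Y)$, each carrying a factor $(w/m)^{2}$, and combining these with the residual $Q(\nabla w,X,Y,\nabla w)$ yields the stated formula.

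The computation is essentially mechanical once (\ref{symderivP}) and Proposition \ref{propCSWconstantscal} are in hand; the only delicate point is the sign and symmetry bookkeeping for $Q$. In particular one must track the antisymmetries and pair symmetry of $Q$ to verify that $Q(\nabla w,Y,Z,\nabla w)$ is symmetric in $Y,Z$ (consistent with the left-hand side $(\nabla_{\nabla w}P)(Y,Z)$ being symmetric), and to pin down the sign the $Q$-term carries after the substitution. This is the step where I would be most careful to match the conventions fixed by the definition of $Q$ and of the Kulkarni--Nomizu product, since a mismatch there is exactly what would flip the signs of all three terms on the right-hand side.
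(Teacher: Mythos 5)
Your proposal follows essentially the same route as the paper: the first identity is read off from (\ref{symderivP}) using $P(\nabla w)=0$ and the constancy of $\rho$, and the second is obtained by inserting $\nabla w$ into the first identity and evaluating the mixed term $(\nabla_{Y}P)(\nabla w,Z)$ via the Hessian equation. The one divergence is the sign of that mixed term: the paper's proof writes $(\nabla_{X}P)(\nabla w,Y)=P(\nabla_{X}\nabla w,Y)$ with no minus sign, and it is this version that produces the signs $-(w/m)^{2}(\lambda-\rho)P+(w/m)^{2}g(P(\cdot),P(\cdot))$ appearing in the statement; carrying your (literally correct) product-rule minus sign through gives the opposite signs on those two terms, so the global sign ambiguity you flagged is real and has to be resolved against the conventions fixed for $Q$ and for (\ref{symderivP}) in the companion paper rather than by the computation itself.
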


\begin{proof}
The first equation comes from combining the equation (\ref{symderivP})
with the fact that $P(\nabla w)=0$ and $\rho$ is constant. The second
equation then follows from combining the first equation with the following
formula
\begin{eqnarray*}
(\nabla_{X}P)(\nabla w,Y) & = & P(\nabla_{X}\nabla w,Y)\\
& = & \frac{w}{m}P\left((\mathrm{Ric}-\lambda I)(X),Y\right)\\
& = & \frac{w}{m}P\left((P+(\rho-\lambda)I)(X),Y\right),
\end{eqnarray*}
where we have assumed that $X$ and $Y$ are arbitrary parallel fields.
\end{proof}

We obtain the following corollary from considering the last identity
at a critical point of $w$.

\begin{corollary} \label{corPidentitynablawzero}
If $\nabla w$
vanishes at $p\in M$, then
\begin{equation*}
P\circ(P-(\lambda-\rho)I)=0.
\end{equation*}
\end{corollary}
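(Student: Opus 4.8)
The plan is to specialize the second identity in Proposition \ref{propderivPQ} to the point $p$ and simply read off the stated operator equation. Recall that identity reads
\begin{equation*}
\frac{w}{m}(\nabla_{\nabla w}P)(X,Y) = -\left(\frac{w}{m}\right)^{2}(\lambda-\rho)P(X,Y)+\left(\frac{w}{m}\right)^{2}g(P(X),P(Y)) + Q(\nabla w,X,Y,\nabla w).
\end{equation*}
First I would observe that both the left-hand side and the final term on the right vanish at $p$. The covariant derivative $\nabla_{\nabla w}P$ is differentiation in the direction $\nabla w$, which is the zero vector at $p$, so the left-hand side is zero there; likewise $Q(\nabla w,X,Y,\nabla w)$ is quadratic in $\nabla w$ and hence also vanishes at $p$.

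What survives, evaluated at $p$, is the relation
\begin{equation*}
0 = -\left(\frac{w}{m}\right)^{2}(\lambda-\rho)P(X,Y) + \left(\frac{w}{m}\right)^{2}g(P(X),P(Y)).
\end{equation*}
Since $p$ is an interior critical point (any point with $w=0$ lies on $\partial M$, where $\nabla w$ is nonvanishing), we have $w(p)>0$, so dividing by $(w/m)^{2}$ leaves
\begin{equation*}
g(P(X),P(Y)) = (\lambda-\rho)\,P(X,Y)
\end{equation*}
for all tangent vectors $X,Y$ at $p$. Although the identity of Proposition \ref{propderivPQ} was derived for parallel fields, this is a pointwise tensorial relation and hence holds for arbitrary $X,Y$.

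Finally I would translate this $(0,2)$-tensor identity into the desired operator identity. Viewing $P$ as a self-adjoint $(1,1)$-tensor, we have $P(X,Y)=g(P(X),Y)$ and, by self-adjointness, $g(P(X),P(Y)) = g(P^{2}(X),Y)$. The relation above therefore becomes $g\bigl(P^{2}(X)-(\lambda-\rho)P(X),Y\bigr)=0$ for all $Y$, which forces $P^{2}=(\lambda-\rho)P$, i.e. $P\circ\bigl(P-(\lambda-\rho)I\bigr)=0$. The computation is short, so there is no substantial obstacle; the only points requiring care are confirming that $w(p)\neq0$ so that the division by $(w/m)^{2}$ is legitimate, and noting the passage from the parallel-field formulation to a genuine pointwise identity.
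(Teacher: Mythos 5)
Your argument is exactly the one the paper intends: the corollary is obtained by evaluating the second identity of Proposition \ref{propderivPQ} at the critical point, where the left-hand side and the $Q$-term vanish because $\nabla w=0$ there, and then cancelling the factor $(w/m)^{2}$. The additional care you take in verifying $w(p)\neq 0$ and in passing from the parallel-field formulation to a pointwise tensorial identity is correct and consistent with the paper's setup.
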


\smallskip{}

The other very important formula we will utilize is the identity from
\cite{KimKim} which states that
\begin{equation}
w\Delta w+(m-1)|\nabla w|^{2}+\lambda w^{2}=\mu=\mathrm{const}.\label{eqnmu}
\end{equation}

By tracing the $(\lambda,n+m)$-Einstein equation, we also have
\begin{equation*}
\Delta w=\frac{w}{m}\left(\mathrm{scal}-n\lambda\right).
\end{equation*}
Letting $\bar{\mu}=\frac{\mu}{m-1}$ after a calculation the equation
(\ref{eqnmu}) then becomes
\begin{equation}
\bar{\mu}=\bar{k}w^{2}+|\nabla w|^{2},\quad\mbox{where}\quad\bar{k}=\frac{\lambda-\rho}{m}.\label{eqnmubar}
\end{equation}
Since $\bar{k}$ is constant when the scalar curvature is constant
this tells us that the only possibilities for the form of the function
$w$ are the functions appearing in the rigid examples.

\begin{proposition} \label{propwfunctions}
Suppose $(M,g,w)$ is
a non-trivial $(\lambda,n+m)$-Einstein manifold with constant scalar
curvature and $m>1$.
\begin{itemize}
\item If $\bar{k}>0$, then $\bar{\mu}>0$ and
\begin{equation*}
w=\sqrt{\frac{\bar{\mu}}{\bar{k}}}\cos\left(\sqrt{\bar{k}}r\right),
\end{equation*}
for a distance function $r.$
\item If $\bar{k}=0$, then $\bar{\mu}>0$ and $w=\sqrt{\bar{\mu}}r$ for
a distance function $r.$
\item If $\bar{k}<0$, then the general form of $w$ is
\begin{equation*}
w=C_{1}\exp\left({\sqrt{-\bar{k}}r}\right)+C_{2}\exp\left({-\sqrt{-\bar{k}}r}\right)
\end{equation*}
where $r$ is a distance function. More specifically we can express
this as
\begin{eqnarray*}
w & = & \exp\left({\sqrt{-\bar{k}}r}\right),\text{ when }\bar{\mu}=0,\\
w & = & \sqrt{\frac{\bar{\mu}}{\bar{k}}}\cosh\left(\sqrt{-\bar{k}}r\right),\text{ when }\bar{\mu}<0,\\
w & = & \sqrt{-\frac{\bar{\mu}}{\bar{k}}}\sinh\left(\sqrt{-\bar{k}}r\right),\text{ when }\bar{\mu}>0.
\end{eqnarray*}
\end{itemize}
\end{proposition}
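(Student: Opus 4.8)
The plan is to exploit the first integral (\ref{eqnmubar}), $\bar{\mu}=\bar{k}w^{2}+|\nabla w|^{2}$, in which both $\bar{k}$ and $\bar{\mu}$ are genuine constants because the scalar curvature, hence $\rho$, is constant. Differentiating this identity in an arbitrary direction $X$ gives $2\Hess w(X,\nabla w)=-2\bar{k}w\,g(\nabla w,X)$, so that $\nabla_{\nabla w}\nabla w=-\bar{k}w\,\nabla w$. On the regular set $\{|\nabla w|\neq 0\}$ this says precisely that the unit field $N=\nabla w/|\nabla w|$ satisfies $\nabla_{N}N=0$, i.e. its integral curves are geodesics. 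First I would record this computation, which is the geometric heart of the argument.

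Next I would introduce the signed distance function $r$ measured along these geodesics, so that $\nabla r=N$ and $|\nabla r|=1$ on the regular set; since $|\nabla w|^{2}=\bar{\mu}-\bar{k}w^{2}$ depends only on the value of $w$, the level sets of $w$ are equidistant hypersurfaces and $w$ becomes a function of $r$ alone. Along each normal geodesic one computes $w'=dw(N)=|\nabla w|$, so $(w')^{2}=\bar{\mu}-\bar{k}w^{2}$, and differentiating once more yields the autonomous linear ODE $w''=-\bar{k}w$. The problem is thereby reduced to solving this ODE subject to the constraint $(w')^{2}+\bar{k}w^{2}=\bar{\mu}$ together with the positivity $w>0$.

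Finally I would dispatch the three cases by elementary ODE theory and the constraint, normalizing by translating $r$ (and, when $\bar{\mu}=0$, scaling $w$). When $\bar{k}>0$ the constraint forces $\bar{\mu}=\bar{k}w^{2}+|\nabla w|^{2}>0$ and $w=\sqrt{\bar{\mu}/\bar{k}}\cos(\sqrt{\bar{k}}\,r)$; when $\bar{k}=0$ we get $(w')^{2}=\bar{\mu}$, so non-triviality forces $\bar{\mu}>0$ and $w=\sqrt{\bar{\mu}}\,r$; when $\bar{k}<0$ the general solution is $w=C_{1}\exp(\sqrt{-\bar{k}}\,r)+C_{2}\exp(-\sqrt{-\bar{k}}\,r)$, and substituting into the constraint gives $\bar{\mu}=4\bar{k}C_{1}C_{2}$, so the sign of $\bar{\mu}$ (equivalently of $C_{1}C_{2}$) distinguishes the exponential, $\cosh$, and $\sinh$ forms after normalization.

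I expect the main obstacle to be not the differentiation or the ODE, which are routine, but the global geometric setup: showing that the normal geodesic flow of $N$ genuinely foliates a dense open subset of $M$ by parallel level sets, that $r$ extends to an honest distance function on the relevant interval, and that the critical set of $w$ (where $|\nabla w|=0$ and the frame degenerates) is handled correctly using completeness. Care is also needed at points where $w'=0$, so that dividing by $|\nabla w|$ is legitimate, and in checking that the amplitudes produced by the constraint are consistent with $w>0$.
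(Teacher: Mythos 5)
Your proposal is correct and is essentially the paper's argument: both rest entirely on the first integral $\bar{k}w^{2}+|\nabla w|^{2}=\bar{\mu}$ (with $\bar{k}$, $\bar{\mu}$ constant because the scalar curvature is), on positivity of $w$ to force $\bar{\mu}>0$ when $\bar{k}\geq 0$, and on elementary one-variable analysis of the resulting constraint. The only cosmetic difference is the direction of travel: you integrate the ODE $w''=-\bar{k}w$ along the normal geodesics of $\nabla w/|\nabla w|$ subject to $(w')^{2}+\bar{k}w^{2}=\bar{\mu}$, whereas the paper directly exhibits an explicit function of $w$ (for instance $\tfrac{1}{\sqrt{\bar{k}}}\arccos\bigl(w/\sqrt{\bar{\mu}/\bar{k}}\bigr)$ when $\bar{k}>0$) whose gradient has unit length and then inverts.
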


\begin{remark}
Note that Corollary \ref{kbarsign} shows that $\bar{k}$
and $\lambda$ always have the same sign.
\end{remark}

\begin{proof}
Recall that a distance function is simply a smooth function whose
gradient always has unit length. The proof uses the equation
\begin{equation*}
\bar{k}w^{2}+\left\vert \nabla w\right\vert ^{2}=\bar{\mu}.
\end{equation*}

When $\bar{k}=0$ and $\bar{\mu}>0$ we note that $\frac{w}{\sqrt{\bar{\mu}}}$
is a distance function.

When $\bar{\mu}=0$ and $\bar{k}<0$ the function $\log\frac{w}{\sqrt{-\bar{k}}}$
is a distance function.

When $\bar{k}>0$ we note that $\bar{\mu}>0$ and we can rewrite the
formula as
\begin{equation*}
\frac{1}{\bar{k}}\frac{\left\vert \nabla w\right\vert
^{2}}{\left(\sqrt{\frac{\bar{\mu}}{\bar{k}}}\right)^{2}-w^{2}}=1
\end{equation*}
showing that $\sqrt{\bar{k}}\arccos\left(\frac{w}{\sqrt{\frac{\bar{\mu}}{\bar{k}}}}\right)$
is a distance function.

Finally when $\bar{k}<0$ there are two cases depending on the sign
of $\bar{\mu}.$ The formula is then rewritten as
\begin{equation*}
\frac{1}{-\bar{k}}\frac{\left\vert \nabla w\right\vert
^{2}}{\left(\sqrt{-\frac{\bar{\mu}}{\bar{k}}}\right)^{2}+w^{2}}=1
\end{equation*}
when $\bar{\mu}>0$, or
\begin{equation*}
\frac{1}{\bar{k}}\frac{\left\vert \nabla w\right\vert
^{2}}{\left(\sqrt{\frac{\bar{\mu}}{\bar{k}}}\right)^{2}-w^{2}}=1
\end{equation*}
when $\bar{\mu}<0.$ And we get the specific expression using $\mathrm{arcsinh}$
or $\mathrm{arccosh}$.
\end{proof}

Finally in this section we discuss two propositions about the critical
point set of $w$ that will be used in the proof of Theorem \ref{thmradialQ}.

\begin{proposition} \label{propNequaldim}
Suppose $(M,g,w)$ is
a $(\lambda,n+m)$-Einstein manifold with constant scalar curvature
and the set of critical points of $w$ is non-empty. Then all connected
components have the same dimension. Furthermore let $N$ be a connected
component, then normal vectors to $N$ are $0$ eigenvectors for $P$
and tangent vectors are $\left(\lambda-\rho\right)$ eigenvectors
for $P$.
\end{proposition}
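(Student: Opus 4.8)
The plan is to study the Hessian of $w$ at a critical point, exploit the algebraic identity for $P$ there, and then read off both the eigenspace structure and the local submanifold structure of $N$ from the nondegeneracy of the Hessian transverse to its kernel.

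First I would record two preliminary facts at a critical point $p$. By (\ref{eqnmubar}) we have $|\nabla w|^{2}=\bar{\mu}-\bar{k}w^{2}$ with $\bar{k}=\frac{\lambda-\rho}{m}$ constant, so $\bar{\mu}=\bar{k}\,w(p)^{2}$. Since the manifold is non-trivial, $\bar{k}\neq0$ (otherwise $|\nabla w|^{2}\equiv\bar{\mu}=0$ and $w$ is constant), hence $\lambda\neq\rho$, $w(p)^{2}=\bar{\mu}/\bar{k}>0$, and in particular $w(p)\neq0$. The same identity shows that the critical set is exactly the level set $N=\{w^{2}=\bar{\mu}/\bar{k}\}$, on which $w$ is locally constant and attains an extreme value. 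Next, Corollary \ref{corPidentitynablawzero} gives $P\circ(P-(\lambda-\rho)I)=0$ at $p$, so the symmetric operator $P_{p}$ has only the eigenvalues $0$ and $\lambda-\rho$; I write the orthogonal decomposition $T_{p}M=V_{0}\oplus V_{\lambda-\rho}$ into eigenspaces. Tracing the $(\lambda,n+m)$-Einstein equation in $(1,1)$-form yields $\mathrm{Hess}\,w=\frac{w}{m}(P-(\lambda-\rho)I)$, so at $p$ the Hessian vanishes on $V_{\lambda-\rho}$ and equals $-\frac{w(p)(\lambda-\rho)}{m}$ times the identity on $V_{0}$. Thus $\ker(\mathrm{Hess}_{p}w)=V_{\lambda-\rho}$ and $\mathrm{Hess}_{p}w$ is nondegenerate on $V_{0}$.

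For the equal-dimension statement I would argue purely algebraically. Let $k_{1}$ be the multiplicity of the eigenvalue $\lambda-\rho$ at $p$; then $\mathrm{tr}P=k_{1}(\lambda-\rho)$. By (\ref{trrholam}) the quantity $\mathrm{tr}P=(n-1)\lambda-(n+m-1)\rho$ is a global constant (because the scalar curvature is constant), and $\lambda-\rho\neq0$ is constant, so $k_{1}=\mathrm{tr}P/(\lambda-\rho)$ takes the same value at every critical point. Hence, once each connected component is shown to be a submanifold of dimension $k_{1}$, all components automatically share this dimension.

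To obtain the submanifold structure and identify the tangent and normal spaces, I would apply the implicit function theorem to the section $\nabla w$. Extending an orthonormal basis of $V_{0}$ at $p$ to a local frame $\{e_{i}\}$ and setting $F=(\langle\nabla w,e_{1}\rangle,\dots,\langle\nabla w,e_{k_{0}}\rangle)$ with $k_{0}=\dim V_{0}$, the derivative of $F$ at $p$ in a direction $X$ is $\langle\mathrm{Hess}_{p}w(X),e_{i}\rangle$, since $\nabla w(p)=0$. By the Hessian computation $dF_{p}$ restricts to an isomorphism on $V_{0}$ and to zero on $V_{\lambda-\rho}$, so $\{F=0\}$ is a submanifold of dimension $k_{1}$ with tangent space $V_{\lambda-\rho}$ at $p$, and it contains the critical set near $p$. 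Combining this with the description $N=\{w^{2}=\bar{\mu}/\bar{k}\}$ as the extremal level set, a Morse--Bott type argument should identify $N$ locally with $\{F=0\}$, giving $T_{p}N=V_{\lambda-\rho}$ and normal space $V_{0}$; by construction these are precisely the $(\lambda-\rho)$- and $0$-eigenspaces of $P$, as claimed. I expect the main obstacle to be exactly this last step, namely upgrading the inclusion $N\subseteq\{F=0\}$ to an equality, i.e.\ showing that $w$ is genuinely critical along the entire $k_{1}$-dimensional kernel direction rather than along a proper subset, which requires combining the transverse nondegeneracy of $\mathrm{Hess}\,w$ with the fact that $N$ is the extremal level set of $w$.
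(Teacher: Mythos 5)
Your algebraic setup is correct, and one piece of it is actually cleaner than the paper's: the identity $\bar{\mu}=\bar{k}\,w(p)^{2}$, the eigenvalue dichotomy from Corollary \ref{corPidentitynablawzero}, the formula $\mathrm{Hess}\,w=\frac{w}{m}\left(P-(\lambda-\rho)g\right)$, and the observation that $k_{1}=\mathrm{tr}P/(\lambda-\rho)$ is a global constant are all fine, and the implicit function theorem does place each component $N$ inside a $k_{1}$-dimensional submanifold $S=\{F=0\}$ with $T_{p}S=V_{\lambda-\rho}$. The genuine gap is exactly the step you flag at the end: upgrading $N\subseteq S$ to local equality. Transverse nondegeneracy of the Hessian together with the description of $N$ as the extremal level set is \emph{not} sufficient for this in the abstract: the function $h(x,y)=y^{2}+x^{4}$ on $\mathbb{R}^{2}$ is nonnegative, attains its minimum exactly at the origin, and has Hessian there with one-dimensional kernel, yet its critical set is $0$-dimensional rather than $1$-dimensional. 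So there is no off-the-shelf ``Morse--Bott type argument''; you would have to use more of the structure (for instance the transnormal identity $|\nabla w|^{2}=\bar{\mu}-\bar{k}w^{2}$, which makes $r$ a distance function off $N$ and forces every nearby point to lie on a geodesic meeting $N$ orthogonally). As written, the proof does not establish that $\dim N=k_{1}$, nor that $T_{p}N$ exhausts $V_{\lambda-\rho}$, which is the heart of the statement.

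The paper avoids this difficulty by running the dimension count in the opposite direction. Taking for granted that a component $N$ is a submanifold of some dimension $l$, it uses the Laplacian comparison $\Delta r=\frac{n-l-1}{r}+O(1)$ near $N$ together with $\Delta w=\frac{w}{m}(\mathrm{scal}-n\lambda)$ to deduce $\mathrm{tr}P=l(\lambda-\rho)$ at $N$; constancy of $\mathrm{tr}P$ then gives equality of the $l$'s across components, and since $P(\nabla w)=0$ makes every unit normal (a limit of $\nabla w/|\nabla w|$) a $0$-eigenvector, the trace identity forces the $(\lambda-\rho)$-eigenspace to have dimension exactly $l$ and hence to coincide with $T_{p}N$. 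In other words, the mean-curvature asymptotics of the distance function to $N$ is precisely the input that substitutes for the Morse--Bott step you are missing. To keep your route you should either import that computation, or invoke the structure theory of transnormal functions to certify that $N$ is a submanifold whose normal exponential map covers a neighborhood, after which your identification of the normal space with $V_{0}$ goes through.
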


\begin{proof}
There are only two nontrivial cases where $w$ has critical points
\begin{eqnarray*}
w & = & C\cos\left(\sqrt{\bar{k}}r\right),\text{ or}\\
w & = & C\cosh\left(\sqrt{-\bar{k}}r\right).
\end{eqnarray*}
By scaling we can further assume that $C=1$ and additionally that
$N^{l}\subset\left\{ r=0\right\} $.

We know that at $N$
\begin{equation*}
\Delta r=\frac{n-l-1}{r}+O\left(1\right).
\end{equation*}
Thus
\begin{eqnarray*}
\Delta w & = & \left(-\bar{k}r+O\left(r^{3}\right)\right)\left(\frac{n-l-1}{r} +
O\left(1\right)\right)+\left(-\bar{k}+O\left(r^{2}\right)\right) \\
& = & -\bar{k}\left(n-l\right)+O\left(r\right).
\end{eqnarray*}

The $\left(\lambda,n+m\right)$-Einstein equation gives us
\begin{eqnarray*}
\Delta w & = & \frac{w}{m}\left(\mathrm{scal}-n\lambda\right)\\
& = & \frac{w}{m}\left(\mathrm{tr}P-n\left(\lambda-\rho\right)\right).
\end{eqnarray*}
So at $N$ we have
\begin{equation*}
\frac{1}{m}\left(\mathrm{tr}P-n\left(\lambda-\rho\right)\right)=-\bar{k}\left(n-l\right)
\end{equation*}
showing that
\begin{eqnarray*}
\mathrm{tr}P & = & -\left(\lambda-\rho\right)\left(n-l\right)+n\left(\lambda-\rho\right)\\
& = & l\left(\lambda-\rho\right).
\end{eqnarray*}
Corollary \ref{corPidentitynablawzero} implies that at $N$
\begin{equation*}
P\circ\left(P-(\lambda-\rho)I\right)=0.
\end{equation*}
Thus $0$ and $\lambda-\rho$ are the only possible eigenvectors
at $N.$ Since $\frac{\nabla w}{\left\vert \nabla w\right\vert }$
converges to normal vectors to $N$ it follows that any normal vector
to $N$ is a $0$ eigenvector for $P.$ As
\begin{equation*}
\mathrm{tr}P=l\left(\lambda-\rho\right)
\end{equation*}
the tangent vectors to $N$ must be $\lambda-\rho$ eigenvectors
for $P.$
\end{proof}

\begin{proposition} \label{propcriticalpoints}
Suppose a $(\lambda,n+m)$-Einstein
manifold $(M,g,w)$ satisfies the following identity
\begin{equation*}
P\circ\left(P-(\lambda-\rho)I\right)=0
\end{equation*}
everywhere on $M$, then for $N$, the set of critical points of
$w$, we have
\begin{enumerate}
\item $N$ is totally geodesic.
\item $\nabla P$ vanishes at $N.$
\end{enumerate}
\end{proposition}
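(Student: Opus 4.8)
The plan is to exploit the algebraic identity $P^{2}=(\lambda-\rho)P$ together with a Codazzi-type symmetry of $\nabla P$ that holds \emph{at} the critical set. Write $\kappa=\lambda-\rho$, which is constant in the constant-scalar-curvature setting of this section; if $\kappa=0$ the hypothesis forces $P=0$ and both conclusions are immediate, so assume $\kappa\neq0$. The hypothesis then says that $P$ has only the two eigenvalues $0$ and $\kappa$ at every point, so $M$ carries two orthogonal, complementary eigendistributions $V_{0}=\ker P$ and $V_{\kappa}=\mathrm{im}\,P$; since $\kappa$ and $\mathrm{tr}P$ are constant these have constant rank and are smooth. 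Every critical point of $w$ is interior with $w>0$ (the critical values occur where $\cos$ or $\cosh$ is extremal), and Proposition \ref{propNequaldim} identifies, at each $p\in N$, the tangent space $T_{p}N=V_{\kappa}(p)$ and the normal space as $V_{0}(p)$.

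First I would record the two identities driving the argument. At $p\in N$ we have $\nabla w=0$ and hence $P(\nabla w)=0$, so evaluating (\ref{symderivP}) at $p$ and dividing by $w/m\neq0$ gives the Codazzi symmetry
\begin{equation*}
(\nabla_{X}P)(Y,Z)=(\nabla_{Y}P)(X,Z)\quad\text{at each }p\in N .
\end{equation*}
Second, differentiating $P^{2}=\kappa P$ (using that $\kappa$ is constant) shows that the symmetric operator $A_{X}:=\nabla_{X}P$ satisfies $A_{X}P+PA_{X}=\kappa A_{X}$ \emph{everywhere}. Feeding an eigenvector $u\in V_{0}$ into this relation gives $P(A_{X}u)=\kappa A_{X}u$, and feeding $v\in V_{\kappa}$ gives $P(A_{X}v)=0$; thus $A_{X}$ interchanges the two eigendistributions, and consequently its ``diagonal blocks'' vanish: $g(A_{X}u_{1},u_{2})=0$ for $u_{1},u_{2}\in V_{0}$ and $g(A_{X}v_{1},v_{2})=0$ for $v_{1},v_{2}\in V_{\kappa}$, globally on $M$.

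I would prove conclusion (2) first, since it feeds into (1). Because the diagonal blocks of $\nabla_{X}P$ vanish identically, the only components that could be nonzero at $N$ are the mixed ones $(\nabla_{X}P)(u,v)$ with $u\in V_{0}$, $v\in V_{\kappa}$. Each of these is killed by combining the Codazzi symmetry with the vanishing of a diagonal block: for $X=v'\in V_{\kappa}$ one writes $(\nabla_{v'}P)(u,v)=(\nabla_{u}P)(v',v)=0$ because $v',v\in V_{\kappa}$; and for $X=u'\in V_{0}$ one writes $(\nabla_{u'}P)(u,v)=(\nabla_{u'}P)(v,u)=(\nabla_{v}P)(u',u)=0$ because $u',u\in V_{0}$. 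Hence $\nabla_{X}P=0$ at $N$ for every $X$, which is conclusion (2).

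Finally, for (1) I would use (2) to compute the second fundamental form of $N$. Choose local eigenfields $V'\in V_{\kappa}$ and $U\in V_{0}$ near $p\in N$ with $V'(p)=v'$, $U(p)=u$; along $N$ the field $V'$ is tangent and $U$ is normal, and $g(U,V')\equiv0$. For $v\in T_{p}N=V_{\kappa}(p)$ a direct expansion of $(\nabla_{v}P)(v',u)$, using $PV'=\kappa V'$, $PU=0$ and the symmetry of $P$, collapses to $-\kappa\,g(v',\nabla_{v}U)$; since this vanishes by (2) and $\kappa\neq0$ we obtain $g(\nabla_{v}U,v')=0$, and differentiating $g(U,V')\equiv0$ along $v$ then yields $g(u,\nabla_{v}V')=0$ for every normal $u$. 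Thus the normal part of $\nabla_{v}V'$ vanishes, i.e.\ the second fundamental form of $N$ is zero, so $N$ is totally geodesic. The step I expect to be most delicate is the bookkeeping in the third paragraph: the mixed components of $\nabla P$ vanish only because the Codazzi symmetry holds \emph{exactly} on $N$ (where $\nabla w=0$), so one must keep straight that $P^{2}=\kappa P$ is used globally while the Codazzi relation is available only pointwise on $N$, and that the eigendistributions are genuinely smooth so that the eigenfields $V',U$ exist.
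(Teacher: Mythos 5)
Your proof is correct, and it takes a genuinely different route from the paper's, most visibly for conclusion (2). The paper proves (1) first, combining the Codazzi symmetry at $N$ with the Hessian equation $m\nabla\nabla w=w\left(P-(\lambda-\rho)I\right)$ (whose restriction to tangent directions of $N$ vanishes), and then proves (2) in two separate steps: tangential derivatives of $P$ are killed using the total geodesy of $N$ just established, while normal derivatives require the second identity of Proposition \ref{propderivPQ} together with a limiting argument along $\frac{\nabla w}{\left\vert \nabla w\right\vert}\rightarrow Y$ as one approaches $N$. Your key observation --- that covariantly differentiating the \emph{global} identity $P^{2}=(\lambda-\rho)P$ forces $\nabla_{X}P$ to interchange the two eigendistributions everywhere, so that both diagonal blocks of $\nabla_{X}P$ vanish on all of $M$ and the Codazzi symmetry at $N$ then annihilates the remaining mixed components --- replaces both of these steps at once and in particular dispenses with the limit along the gradient flow; it also reverses the logical order, deriving (1) from (2) by a direct second-fundamental-form computation instead of from the Hessian equation. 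Both arguments rely on Proposition \ref{propNequaldim} to identify the tangent and normal spaces of $N$ with the $(\lambda-\rho)$- and $0$-eigendistributions, and on the Codazzi symmetry extracted from (\ref{symderivP}) at points where $\nabla w=0$ and $w>0$. What your approach buys is a cleaner, essentially algebraic proof of (2) that isolates the global off-diagonal structure of $\nabla P$; what the paper's buys is that (1) is obtained independently of (2). The caveats you flag (constancy of $\rho$, smoothness and constant rank of the eigendistributions, interiority of the critical points) are all legitimately available in the constant-scalar-curvature setting in which this proposition is applied, so I see no gap.
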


\begin{proof}
The equation
\begin{equation*}
m\nabla\nabla w=w\left(P-\left(\lambda-\rho\right)I\right)
\end{equation*}
at $N$ reduces to a soliton type equation
\begin{equation*}
m\nabla\nabla w=P-\left(\lambda-\rho\right)I.
\end{equation*}
In addition we also have from Proposition \ref{propderivPQ} that
at $N$
\begin{equation*}
0=-Q(X,Y,Z,\nabla w)=\frac{w}{m}\left((\nabla_{X}P)(Y,Z)-(\nabla_{Y}P)(X,Z)\right).
\end{equation*}
First we show that $N$ is totally geodesic. Let $Y$ be a normal
vector field to $N$ and $X$ be a tangent vector field. Thus
\begin{eqnarray*}
P(Y)=0 & \qquad & \nabla_{Y}\nabla w=\bar{k}Y\\
P(X)=\left(\lambda-\rho\right)X & \qquad & \nabla_{X}\nabla w=0.
\end{eqnarray*}

Using that the only eigenvalues for $P$ are $0$ and $\lambda-\rho$
we can extend $X,Y$ such that they remain eigenfields for $P$. In
particular
\begin{equation*}
-P\left(\nabla_{X}Y\right)=\left(\nabla_{X}P\right)(Y)=\left(\nabla_{Y}P\right)(X)=
\left(\lambda-\rho\right)\nabla_{Y}X-P\left(\nabla_{Y}X\right).
\end{equation*}

Then from the soliton equation we see that
\begin{eqnarray*}
\left(\lambda-\rho\right)g\left(\nabla_{X}Y,X\right) & = &
P\left(\nabla_{X}Y,X\right)-m\mathrm{Hess}w\left(\nabla_{X}Y,X\right)\\
& = & P\left(\nabla_{X}Y,X\right)\\
& = & P\left(\nabla_{Y}X,X\right)-\lambda g\left(\nabla_{Y}X,X\right)+\rho g\left(\nabla_{Y}X,X\right) \\
& = & m\mathrm{Hess}w\left(\nabla_{Y}X,X\right)\\
& = & 0.
\end{eqnarray*}
So the second fundamental form vanishes and $N$ is totally geodesic.

Next we show that $P$ is parallel at $N.$ We show that
\begin{eqnarray*}
\nabla_{X}P & = & 0,\\
\nabla_{Y}P & = & 0,
\end{eqnarray*}
where $X$ is tangent to $N,$ and $Y$ is normal to $N.$

To show the first we evaluate on $X^{\prime}\in TN$ and $Y^{\prime}$
normal to $N.$ Since
\begin{eqnarray*}
P(X^{\prime}) & = & \left(\lambda-\rho\right)X^{\prime}\\
P(Y^{\prime}) & = & 0
\end{eqnarray*}
we obtain
\begin{eqnarray*}
\left(\nabla_{X}P\right)\left(X^{\prime}\right) & = &
\left(\lambda-\rho\right)\nabla_{X}X^{\prime}-P\left(\nabla_{X}X^{\prime}\right),\\
\left(\nabla_{X}P\right)\left(Y^{\prime}\right) & = & -P\left(\nabla_{X}Y^{\prime}\right).
\end{eqnarray*}
Both of these expressions vanish as $N$ is totally geodesic.

For the second case use Proposition \ref{propderivPQ} again to obtain:
\begin{eqnarray*}
\frac{w}{m}(\nabla_{\nabla w}P)(Z) & = &
-\left(\frac{w}{m}\right)^{2}\left(\left((\lambda-\rho)I-P\right)P\right)\left(Z\right)+Q\left(Z,\nabla
w\right)\nabla w\\
& = & Q\left(Z,\nabla w\right)\nabla w.
\end{eqnarray*}
Dividing by $\left\vert \nabla w\right\vert $ then yields
\begin{equation*}
\frac{w}{m}(\nabla_{\frac{\nabla w}{\left\vert \nabla w\right\vert }}P)(Z)=Q\left(Z,\nabla w\right)\frac{\nabla
w}{\left\vert \nabla w\right\vert }
\end{equation*}
and supposing that
\begin{equation*}
\frac{\nabla w}{\left\vert \nabla w\right\vert }\rightarrow Y
\end{equation*}
as we approach $N$ we obtain
\begin{equation*}
-\frac{w}{m}\nabla_{Y}P=0
\end{equation*}
as desired.
\end{proof}

\begin{remark}
If a $(\lambda,n+m)$-Einstein manifold satisfies
the radial $Q$ flatness condition (\ref{eqnradialseccurvature}),
then we will show that the $P$ tensor satisfies the equation in Proposition
\ref{propcriticalpoints}.
\end{remark}

\medskip{}

\section{Proof of Theorem \protect\ref{thmdim3}, \protect\ref{thmuniquenesslambdazero},
and \protect\ref{thmscaln-1}}

In this section we will discuss the proofs of theorems \ref{thmdim3},
\ref{thmuniquenesslambdazero}, and \ref{thmscaln-1}. The easiest
to prove is Theorem \ref{thmuniquenesslambdazero} which is the classification
in the $\lambda=0$ case. It already follows directly from the formulas
in the past section, and is the same argument as in \cite{CaseShuWei}.

\begin{proof}[Proof of Theorem \protect\ref{thmuniquenesslambdazero}]
Since the scalar curvature is constant and $\lambda=0$, from Proposition
\ref{propCSWconstantscal} we have
\begin{eqnarray*}
\rho & = & -\frac{1}{n+m-1}\mathrm{tr}(P),\\
|P|^{2} & = & -\rho\mathrm{tr}(P)=\frac{1}{n+m-1}\left(\mathrm{tr}P\right)^{2}.
\end{eqnarray*}
The second identity above and the Cauchy-Schwarz inequality $n|P|^{2}\geq\left(\mathrm{tr}(P)\right)^{2}$
imply that
\begin{equation*}
\frac{n}{n+m-1}\left(\mathrm{tr}(P)\right)^{2}\geq\left(\mathrm{tr}(P)\right)^{2}.
\end{equation*}
So we have $\mathrm{tr}(P)=0$ as $m>1$. It follows that $\rho=0$
and then $|P|^{2}=0$. Hence $\mathrm{Ric}=\rho g$ and the result
follows from the classification in Proposition \ref{proplambdaEinsteinEinstein}.
\end{proof}
\medskip{}

Next we turn our attention to Theorem \ref{thmscaln-1}. Before the
proof we give two corollaries that follow from combining the theorem
with some of the other results we have already discussed.

\begin{corollary} \label{corrhozero}
Let $(M,g,w)$ be a $(\lambda,n+m)$-Einstein
manifold with constant scalar curvature and $m>1$. If $\lambda>0$,
then
\begin{equation*}
0\leq\rho\leq\lambda,\end{equation*}
and if $\lambda<0$, then
\begin{equation*}
0\geq\rho\geq\lambda.\end{equation*}
Moreover, in either case, $\rho=\lambda$ if and only if $\lambda=0$
and the metric is rigid, and $\rho=0$ if and only if $M=N\times\mathbb{R}$
where $N$ is a $\lambda$-Einstein metric. \end{corollary}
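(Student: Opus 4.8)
The plan is to reduce the entire statement to the single pointwise identity $\mathrm{Ric}(\nabla w,\nabla w)=\rho|\nabla w|^{2}$ and then feed the resulting sign information into Theorem \ref{thmscaln-1}. First I would record this identity: by Proposition \ref{propCSWconstantscal} we have $P(\nabla w)=0$, and since $P=\mathrm{Ric}-\rho g$ this gives $\mathrm{Ric}(\nabla w)=\rho\nabla w$, hence $\mathrm{Ric}(\nabla w,\nabla w)=\rho|\nabla w|^{2}$. Because the scalar curvature is constant, $\rho$ is a genuine constant, so for a non-trivial metric (where $\nabla w\not\equiv 0$, which I take to be the operative hypothesis, as in Theorem \ref{thmscaln-1}) the sign of $\mathrm{Ric}(\nabla w,\nabla w)$ on all of $M$ is governed entirely by the sign of $\rho$.

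For the upper bound I would simply invoke Corollary \ref{cordiffrholambda}, which gives $\lambda-\rho>0$ when $\lambda>0$ and $\lambda-\rho<0$ when $\lambda<0$; in particular $\rho<\lambda$ (respectively $\rho>\lambda$) strictly, so when $\lambda\neq0$ the value $\rho=\lambda$ never occurs. For the lower bound I would argue by contradiction. Suppose $\lambda>0$ but $\rho<0$. Then $\mathrm{Ric}(\nabla w,\nabla w)=\rho|\nabla w|^{2}\leq0$ everywhere, so Theorem \ref{thmscaln-1} applies and $M=\mathbb{R}\times N$ with $N$ a $\lambda$-Einstein manifold. But then $\mathrm{scal}=(n-1)\lambda$, so $\rho=\frac{(n-1)\lambda-\mathrm{scal}}{m-1}=0$, contradicting $\rho<0$; hence $\rho\geq0$. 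The case $\lambda<0$ is symmetric, using the hypothesis $\mathrm{Ric}(\nabla w,\nabla w)\geq0$ in Theorem \ref{thmscaln-1} together with the supposition $\rho>0$. Note that this lower bound genuinely needs Theorem \ref{thmscaln-1}: the bounds from Corollary \ref{cornrhonlambda} alone only confine $\mathrm{scal}$ to an interval and do not pin $\rho$ to the correct side of $0$.

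The equality characterizations then come almost for free. The computation just made shows that the splitting $M=\mathbb{R}\times N$ with $N$ being $\lambda$-Einstein forces $\rho=0$, while conversely $\rho=0$ makes $\mathrm{Ric}(\nabla w,\nabla w)=0$ have exactly the (non-strict) sign needed to trigger Theorem \ref{thmscaln-1} again; this yields the biconditional $\rho=0\Leftrightarrow M=N\times\mathbb{R}$ in the regime $\lambda\neq0$. For the statement about $\rho=\lambda$, Corollary \ref{cordiffrholambda} rules out $\rho=\lambda$ whenever $\lambda\neq0$, so $\rho=\lambda$ forces $\lambda=0$; and in the $\lambda=0$ case the argument proving Theorem \ref{thmuniquenesslambdazero} already gives $\rho=0=\lambda$ together with rigidity, which supplies both directions of that biconditional.

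I do not expect a genuine obstacle here: all the analytic content is packaged inside Theorem \ref{thmscaln-1}, and the role of this corollary is purely to translate its Ricci-curvature hypothesis into the cleaner statement about $\rho$ via $\mathrm{Ric}(\nabla w,\nabla w)=\rho|\nabla w|^{2}$. The one point requiring care is the bookkeeping of the degenerate cases: the trivial metric ($w$ constant) must be excluded, since for it $\rho=-\frac{\lambda}{m-1}$ violates the inequalities, and the $\lambda=0$ case must be handled separately (there $\rho=0$ holds automatically but the product splitting can fail, for instance for the half-line examples in Table \ref{TablelambdaEinsteinEinstein}), so I would state the $\rho=0$ biconditional only for $\lambda\neq0$.
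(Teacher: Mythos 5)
Your proof is correct and follows essentially the same route as the paper's: the upper bound comes from Corollary \ref{cordiffrholambda}, the lower bound and the $\rho=0$ characterization are obtained by translating the hypothesis of Theorem \ref{thmscaln-1} through the identity $\mathrm{Ric}(\nabla w)=\rho\nabla w$ (a consequence of $P(\nabla w)=0$), and the $\rho=\lambda$ case reduces to Theorem \ref{thmuniquenesslambdazero}. Your extra bookkeeping --- excluding the trivial case, where $\rho=-\lambda/(m-1)$ would violate the inequalities, and restricting the $\rho=0$ biconditional to $\lambda\neq 0$ --- makes explicit hypotheses the paper leaves implicit, and is a worthwhile clarification rather than a deviation.
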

\begin{proof}
In Corollary \ref{kbarsign} we already saw that $\lambda\geq\rho$
when $\lambda>0$ and that $\lambda\leq\rho$ when $\lambda<0$. To
see the rigidity statement for this side of inequality, note that
if $\lambda=\rho$, then $\bar{k}=0$ and then we get rigidity by
combining Proposition \ref{propwfunctions} and Theorem \ref{thmuniquenesslambdazero}.

The other inequality is equivalent to Theorem \ref{thmscaln-1}. To
see this note that, since $\mathrm{Ric}(\nabla w)=\rho\nabla w$,
the hypothesis on $\mathrm{Ric}(\nabla w,\nabla w)$ is equivalent
to assuming $\rho$ is zero or has the opposite sign of $\lambda$.
\end{proof}

As mentioned in the introduction, Theorem \ref{thmscaln-1} can also
be interpreted as a gap theorem about the scalar curvature.

\begin{corollary} Let $(M,g,w)$ be a $(\lambda,n+m)$-Einstein manifold
with constant scalar curvature and $m>1$. If the scalar curvature
is between $(n-1)\lambda$ and $n\lambda$ then either the metric
is a $\lambda$-Einstein metric, or it is rigid and splits as $N\times\mathbb{R}$
where $N$ is a $\lambda$-Einstein metric. \end{corollary}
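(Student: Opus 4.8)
The plan is to convert the scalar curvature hypothesis into a sign condition on the constant $\rho$ and then feed it into the results already in hand. Recall from (\ref{eqnrho}) that $\rho=\frac{(n-1)\lambda-\mathrm{scal}}{m-1}$, equivalently $\mathrm{scal}=(n-1)\lambda-(m-1)\rho$; since $m>1$ the endpoint $\mathrm{scal}=(n-1)\lambda$ corresponds exactly to $\rho=0$, while $\mathrm{scal}=n\lambda$ corresponds to $\rho=-\frac{\lambda}{m-1}$. First I would dispose of the degenerate case $\lambda=0$, where the hypothesis forces $\mathrm{scal}=0$ and the conclusion is immediate from Theorem \ref{thmuniquenesslambdazero}, so assume $\lambda\neq0$. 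For $\lambda>0$ the hypothesis reads $(n-1)\lambda\leq\mathrm{scal}\leq n\lambda$, which by the displayed relation is equivalent to $-\frac{\lambda}{m-1}\leq\rho\leq0$, and in particular $\rho\leq0$. For $\lambda<0$ the hypothesis reads $n\lambda\leq\mathrm{scal}\leq(n-1)\lambda$, equivalent to $0\leq\rho\leq-\frac{\lambda}{m-1}$, so that $\rho\geq0$. In either case, using $\mathrm{Ric}(\nabla w)=\rho\nabla w$ from Proposition \ref{propCSWconstantscal}, the binding inequality is precisely the statement that $\mathrm{Ric}(\nabla w,\nabla w)=\rho|\nabla w|^{2}$ has the sign opposite to $\lambda$, i.e. the hypothesis underlying Corollary \ref{corrhozero}.

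The second step is a clean dichotomy according to whether the upper (resp.\ lower) endpoint is attained. If $\mathrm{scal}=n\lambda$, then Corollary \ref{cornrhonlambda} forces the metric to be Einstein, and an Einstein metric with $\mathrm{scal}=n\lambda$ satisfies $\mathrm{Ric}=\frac{\mathrm{scal}}{n}g=\lambda g$, so it is a $\lambda$-Einstein metric; this is the first alternative, and it absorbs every trivial example since those have $\mathrm{Ric}=\lambda g$ and hence $\mathrm{scal}=n\lambda$. Otherwise $\mathrm{scal}\neq n\lambda$, which forces the metric to be non-trivial (trivial metrics always have $\mathrm{scal}=n\lambda$), and here I would invoke Corollary \ref{corrhozero}: for $\lambda>0$ it gives $\rho\geq0$, which combined with $\rho\leq0$ from Step 1 yields $\rho=0$; the same corollary then identifies $M$ as the split product $N\times\mathbb{R}$ with $N$ a $\lambda$-Einstein manifold, which is the second alternative. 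The case $\lambda<0$ is symmetric, with the roles of the two endpoints reversed.

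The main subtlety is the bookkeeping with signs at the two endpoints and, relatedly, making sure the trivial $\lambda$-Einstein metrics are routed into the first alternative rather than the splitting one: for these $\rho=-\frac{\lambda}{m-1}$, which violates the inequality of Corollary \ref{corrhozero}, so they cannot be handled by that corollary and must be isolated first. Separating them by the condition $\mathrm{scal}=n\lambda$ is exactly what makes the dichotomy exhaustive, since $\mathrm{scal}\neq n\lambda$ guarantees non-triviality and thus legitimizes the use of Corollary \ref{corrhozero}. Beyond this, the argument is a direct substitution into (\ref{eqnrho}) together with Corollaries \ref{cornrhonlambda} and \ref{corrhozero}, so there is no genuinely hard analytic step once Theorem \ref{thmscaln-1} (packaged in Corollary \ref{corrhozero}) is available.
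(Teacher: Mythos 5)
Your proof is correct and follows essentially the same route as the paper: the bound $\mathrm{scal}\leq n\lambda$ (with equality forcing the metric to be $\lambda$-Einstein) comes from Corollary \ref{cornrhonlambda}, while the bound by $(n-1)\lambda$ translates into the sign condition $\rho\lambda\leq 0$ and hence into Theorem \ref{thmscaln-1} (packaged as Corollary \ref{corrhozero}), which yields $\rho=0$ and the splitting $N\times\mathbb{R}$. Your extra care in isolating the trivial metrics (which have $\rho=-\lambda/(m-1)$ and must be routed through the $\mathrm{scal}=n\lambda$ branch) and the degenerate case $\lambda=0$ tightens the paper's one-line argument but does not change the approach.
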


\begin{proof}
As we have seen in Proposition \ref{cornrhonlambda}, the scalar curvature
must be between $n\rho$ and $n\lambda$, and can only be equal to
$n\lambda$ if the metric is $\lambda$-Einstein. On the other hand,
by the definition of $\rho$, the scalar curvature being bounded away
from zero by $(n-1)\lambda$ is equivalent to $\rho$ having the opposite
sign as $\lambda$, so the other half of the result is equivalent
to Theorem \ref{thmscaln-1}.
\end{proof}

Now we prove the theorem.

\begin{proof}[Proof of Theorem \protect\ref{thmscaln-1}]
For the discussion above we see that the hypothesis is equivalent to $\rho\bar{k}\leq0$.

By Proposition \ref{propwfunctions} we see that if $w$ is not constant,
then $w=w(r)$ where $r$ is a distance function and $w^{\prime\prime}=-\bar{k}w.$
This implies
\begin{eqnarray*}
\nabla w & = & w^{\prime}\nabla r\\
\Delta w & = & -\bar{k}w+w^{\prime}\Delta r\end{eqnarray*}
and
\begin{equation*}
\Delta w=\frac{w}{m}\left(\mathrm{scal}-n\lambda\right)
\end{equation*}
so
\begin{equation*}
-\bar{k}w+w^{\prime}\Delta r=\frac{w}{m}\left(\mathrm{scal}-n\lambda\right)
\end{equation*}
which implies that
\begin{equation*}
w^{\prime}\Delta r=w\left(\frac{\mathrm{scal}-n\lambda}{m}+\frac{\lambda-\rho}{m}\right)=-\rho w.
\end{equation*}
The Bochner formula for $r$ is
\begin{eqnarray*}
0 & = & \left\vert \mathrm{Hess}r\right\vert ^{2}+g\left(\nabla r,\nabla\Delta r\right)+\mathrm{Ric}\left(\nabla
r,\nabla r\right)\\
& = & \left\vert \mathrm{Hess}r\right\vert ^{2}+g\left(\nabla r,\nabla\Delta r\right)+\rho.
\end{eqnarray*}
The middle term can be calculated by using
\begin{eqnarray*}
-\rho w^{\prime}\nabla r & = & -\bar{k}w\Delta r\nabla r+w^{\prime}\nabla\Delta r\\
& = & \frac{-\bar{k}w}{w^{\prime}}\left(-\rho w\right)\nabla r+w^{\prime}\nabla\Delta r\\
& = & \rho\frac{\bar{k}w^{2}}{w^{\prime}}\nabla r+w^{\prime}\nabla\Delta r
\end{eqnarray*}
showing that
\begin{equation*}
g\left(\nabla r,\nabla\Delta r\right)=-\rho+\rho\bar{k}\frac{w^{2}}{({w^{\prime}})^{2}}.
\end{equation*}
Thus we obtain
\begin{eqnarray*}
0 & = & \left\vert \mathrm{Hess}r\right\vert ^{2}+g\left(\nabla r,\nabla\Delta r\right)+\rho\\
& = & \left\vert \mathrm{Hess}r\right\vert ^{2}-\rho\bar{k}\frac{w^{2}}{\left(w^{\prime}\right)^{2}}
\end{eqnarray*}
showing that $\rho\bar{k}\geq0,$ and can only vanish when $\mathrm{Hess}r=0$,
which implies the splitting along the gradient of $w$.
\end{proof}

\medskip{}

We finish this section by showing Theorem \ref{thmdim3}, i.e., a
three dimensional $(\lambda,3+m)$-Einstein manifold is rigid if it
has constant scalar curvature.

\begin{proof}[Proof of Theorem \protect\ref{thmdim3}]
We start by showing that $P\ $has constant eigenvalues. Proposition \ref{propCSWconstantscal}
says that
\begin{eqnarray*}
P\left(\nabla w\right) & = & 0,\\
\mathrm{tr}P & = & 2\lambda-(m+2)\rho,\\
|P|^{2} & = & (\lambda-\rho)\mathrm{tr}P.
\end{eqnarray*}
Thus one eigenvalue is 0 and if the other two are $p_{1}$ and $p_{2}$ then
\begin{equation*}
p_{1,2}=\frac{1}{2}\left(2\lambda-(m+2)\rho\pm\sqrt{2m\rho\lambda-2m\rho^{2}-m^{2}\rho^{2}}\right) =\frac{1}{2}\left(\mathrm{tr}P\pm\sqrt{m\rho(\mathrm{tr}P)}\right).
\end{equation*}
This shows in particular that $\left(\nabla P\right)\left(E,E\right)=0$
if $E$ is a unit eigenvector for $P.$

The goal now is to prove that either $\lambda=0$ in which case we
can use Theorem \ref{thmuniquenesslambdazero} or $p_{1}=p_{2}$.
In the latter case the metric is either $\rho$-Einstein or $\rho=0$
reducing us respectively to Proposition \ref{proplambdaEinsteinEinstein}
or Corollary \ref{corrhozero}.

Recall
\begin{equation*}
Q=R+\frac{2}{m}P\odot g+\frac{\rho-\lambda}{m}g\odot g.
\end{equation*}
If $n=3$, then we have $R=\frac{\mathrm{scal}}{6}g\odot g+(\mathrm{Ric}-\frac{2\mathrm{scal}}{3}g)\odot g$.
Since $\mathrm{scal}=\mathrm{tr}P+3\rho$, the tensor $Q$ can be
written as
\begin{equation*}
Q=\frac{m+1}{m}\left(2P\odot g-\frac{1}{2}(\mathrm{tr}P)g\odot g\right).
\end{equation*}
If $E_{i}$ is a unit eigenfield for $p_{i},$ then
\begin{eqnarray*}
Q(\nabla w,E_{i},E_{i},\nabla w) & = & \frac{m+1}{m}\left(|\nabla
w|^{2}P(E_{i},E_{i})-\frac{\mathrm{tr}P}{2}|\nabla w|^{2}g(E_{i},E_{i})\right)\\
& = & \frac{m+1}{2m}(p_{i}-p_{j})|\nabla w|^{2}
\end{eqnarray*}
where $j\neq i.$

Using Proposition \ref{propderivPQ} we have
\begin{eqnarray*}
\frac{w}{m}(\nabla_{\nabla w}P)(X,Y) & = &
-\left(\frac{w}{m}\right)^{2}(\lambda-\rho)P(X,Y)+\left(\frac{w}{m}\right)^{2}g(P(X),P(Y))\\
&  & +Q(\nabla w,X,Y,\nabla w).
\end{eqnarray*}
Evaluating on $E_{i}$ yields
\begin{equation*}
0=-\left(\frac{w}{m}\right)^{2}(\lambda-\rho)p_{i}+\left(\frac{w}{m}\right)^{2}p_{i}^{2}
+\frac{m+1}{2m}(p_{i}-p_{j})|\nabla w|^{2}.
\end{equation*}

When $M$ has boundary we know that $w=0$ somewhere so this formula
immediately shows that $p_{1}=p_{2}$. In general we can subtract
the two equations to obtain
\begin{equation*}
0=\left(p_{1}-p_{2}\right)\left(-\frac{w^{2}}{m}(\lambda-\rho)+
\frac{w^{2}}{m}\left(p_{1}+p_{2}\right)+\left(m+1\right)\left\vert \nabla w\right\vert ^{2}\right).
\end{equation*}
This shows that either $p_{1}=p_{2}$ or
\begin{equation*}
2\lambda-(m+2)\rho=\mathrm{tr}P=\left(\lambda-\rho\right)-m\left(m+1\right)\frac{\left\vert \nabla w\right\vert
^{2}}{w^{2}}.
\end{equation*}
As
\begin{equation*}
\frac{\lambda-\rho}{m}w^{2}+\left\vert \nabla w\right\vert ^{2}=\bar{\mu}
\end{equation*}
the latter case can only happen when $\bar{\mu}=0$ and
\begin{equation*}
\frac{-\left(\lambda-\rho\right)+m\rho}{m+1}=m\frac{\left\vert \nabla w\right\vert
^{2}}{w^{2}}=-\left(\lambda-\rho\right).
\end{equation*}
Thus $\lambda=0$.
\end{proof}

\begin{remark}
In the proof above, we showed that $\mathrm{Ric}$
has constant eigenvalues. Such a metric is called \emph{Ricci curvature
homogeneous}. This condition is more general than being \emph{curvature
homogeneous}, i.e., for any two points $p$ and $q$ in $M$, there
exists a linear isometry $\phi:T_{p}M\rightarrow T_{q}M$ such that
$\phi^{\ast}\left(R_{q}\right)=R_{p}$, see \cite{Singer}. Both notions
are equivalent in the two and three-dimensional cases but not for
higher dimensions. In dimension three there are curvature homogeneous
spaces that are not homogeneous, see \cite{Bueken,BuekenVanhecke,Kowalski}
and the references therein.
\end{remark}

\medskip{}

\section{Proof of Theorem \protect\ref{thmradialQ} and Corollary \protect\ref{corharmonic}}

In this section we prove Theorem \ref{thmradialQ}, i.e., that radial flatness of $Q$ implies the rigidity. There are a number of steps and the proof breaks down into different cases. The proof comes from studying the eigenvalues and eigen-distributions of $P$. First we show that $0$ and $\lambda-\rho$ are the only possible eigenvalues.

\begin{lemma} \label{lemradialQtwoeigenvalues}
Suppose a $(\lambda,n+m)$-Einstein manifold $(M,g,w)$ has constant scalar curvature and
\begin{equation*}
Q(\nabla w,\cdot,\cdot,\nabla w)=0.
\end{equation*}
Then the eigenvalues of $P$ are either $0$ or $\lambda-\rho$, i.e.,
on $M$ we have
\begin{equation*}
P\circ\left(P-(\lambda-\rho)I\right)=0.
\end{equation*}
\end{lemma}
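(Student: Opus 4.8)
The plan is to treat $P$ as a self-adjoint endomorphism and to exploit that, under the radial flatness hypothesis, its derivative in the direction $\nabla w$ is algebraically determined by $P$ itself. Concretely, substituting $Q(\nabla w,X,Y,\nabla w)=0$ into the second identity of Proposition \ref{propderivPQ} and using $g(P(X),P(Y))=(P^2)(X,Y)$ collapses that equation to
\begin{equation*}
\nabla_{\nabla w}P=\frac{w}{m}\left(P^2-(\lambda-\rho)P\right)=\frac{w}{m}S,\qquad S:=P\circ\left(P-(\lambda-\rho)I\right).
\end{equation*}
Since $S$ is self-adjoint, the desired conclusion $S=0$ is equivalent to $\mathrm{tr}(S^2)=|S|^2=0$, so the whole problem reduces to computing a single trace.

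The key is that the identities of Proposition \ref{propCSWconstantscal} let me bootstrap the power sums $s_k:=\mathrm{tr}(P^k)$. Writing $a=\lambda-\rho$, Proposition \ref{propCSWconstantscal} already gives $s_2=|P|^2=a\,\mathrm{tr}P=as_1$ and that $s_1,s_2$ are constant. Differentiating $s_2$ along $\nabla w$ via $\nabla_{\nabla w}s_2=2\,\mathrm{tr}(P\,\nabla_{\nabla w}P)=\frac{2w}{m}\mathrm{tr}(PS)=\frac{2w}{m}(s_3-as_2)$, and using that $s_2$ is constant, I conclude $s_3=as_2$ at every point where $w\ne0$; since $a,s_2$ are constant this shows $s_3$ is itself constant. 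Repeating the argument with the now-constant $s_3$, i.e.\ differentiating it via $\nabla_{\nabla w}s_3=3\,\mathrm{tr}(P^2\nabla_{\nabla w}P)=\frac{3w}{m}(s_4-as_3)$, yields $s_4=as_3$. On the interior $w>0$, so dividing by $\frac{w}{m}$ is legitimate, and the resulting polynomial identities in the eigenvalues extend to all of $M$ by continuity of the smooth functions $s_k$.

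Assembling these, $\mathrm{tr}\bigl((P^2-aP)^2\bigr)=s_4-2as_3+a^2s_2$. Substituting $s_4=as_3$ and $s_3=as_2$ turns the right-hand side into $a^2s_2-2a^2s_2+a^2s_2=0$. Thus $|S|^2=0$, so $S=0$, and the eigenvalues of $P$ satisfy $p_i(p_i-(\lambda-\rho))=0$ pointwise, which is exactly $P\circ(P-(\lambda-\rho)I)=0$. Equivalently, one reads off directly that $\sum_i[p_i(p_i-a)]^2=s_4-2as_3+a^2s_2=0$ is a sum of squares, forcing each $p_i\in\{0,\lambda-\rho\}$.

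I expect the only real subtlety — rather than a genuine obstacle — to be bookkeeping: verifying that $P$ and $S$ commute (automatic, since $S$ is a polynomial in $P$) so that traces of products behave exactly as in the scalar eigenvalue computation, and justifying the passage from ``$w\ne0$'' to all of $M$ by continuity. The conceptual step that makes everything work is recognizing that the evolution equation $\nabla_{\nabla w}P=\frac{w}{m}S$ together with the constancy of $|P|^2$ generates the chain $s_{k+1}=as_k$, which is precisely what pins the eigenvalues to $\{0,\lambda-\rho\}$.
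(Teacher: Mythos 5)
Your argument is correct, and it takes a genuinely different route from the paper's. The paper also starts from Proposition \ref{propderivPQ} to get $\nabla_{\nabla w}P=\frac{w}{m}P\circ(P-(\lambda-\rho)I)$, but then works pointwise along integral curves of $\nabla r$: it reduces this to scalar Riccati equations $y^{\prime}=\frac{w}{mw^{\prime}}y(y-m\bar{k})$ for the entries $y_{i}=P(Y_{i},Y_{i})$ in a parallel frame, writes the general non-constant solution $y=m\bar{k}(1+Az)^{-1}$ with $z=(w^{\prime})^{-1}$, and uses the constancy of $\mathrm{tr}P$ together with the linear independence of $\left\{1,z,z^{2},\dots\right\}$ to force every $y_{i}$ to be constant; this invokes $\bar{k}\neq0$ (Corollary \ref{cordiffrholambda}) and the explicit forms of $w$ from Proposition \ref{propwfunctions}. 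You instead bootstrap the power sums $s_{k}=\mathrm{tr}(P^{k})$: since $\nabla_{\nabla w}s_{k}=\frac{kw}{m}(s_{k+1}-as_{k})$ and $s_{1},s_{2}$ are constant by Proposition \ref{propCSWconstantscal}, you get $s_{3}=as_{2}$ and then $s_{4}=as_{3}$ on the interior, hence $|P^{2}-aP|^{2}=s_{4}-2as_{3}+a^{2}s_{2}=0$, with the boundary handled by continuity. Your version is shorter, requires no ODE solution formula, no case analysis on the form of $w$, and no hypothesis $\bar{k}\neq0$; it also sidesteps a small imprecision in the paper's reduction, namely that $(\nabla_{\nabla r}P)(Y,Y)=\frac{w}{mw^{\prime}}(P^{2}-m\bar{k}P)(Y,Y)$ only becomes the scalar equation $y^{\prime}=\frac{w}{mw^{\prime}}y(y-m\bar{k})$ when $Y$ is an eigenfield of $P$, since $(P^{2})(Y,Y)\neq(P(Y,Y))^{2}$ in general. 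What the paper's computation buys in exchange is an explicit description of how a hypothetical non-constant eigenvalue would depend on $r$, but that extra information is not used later, so your trace argument is a clean substitute.
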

\begin{proof}
Since the scalar curvature is constant, from Proposition \ref{propderivPQ} we have
\begin{eqnarray*}
(\nabla_{\nabla w}P) & = & \frac{w}{m}P\circ(P-(\lambda-\rho)I)\\
& = & \frac{w}{m}P\circ(P-m\bar{k}I),
\end{eqnarray*}
where $\bar{k}=\frac{\lambda-\rho}{m}\ne0$ by Corollary \ref{cordiffrholambda}.

In addition from Proposition \ref{propwfunctions} if $\bar{k}>0$ then
\begin{equation*}
w=\cos\left(\sqrt{\bar{k}}r\right),
\end{equation*}
and if $\bar{k}<0$
\begin{eqnarray*}
w & = & \exp\left({\sqrt{-\bar{k}}r}\right),\\
w & = & \cosh\left(\sqrt{-\bar{k}}r\right),\quad\text{ or}\\
w & = & \sinh\left(\sqrt{-\bar{k}}r\right).
\end{eqnarray*}
At points where $\nabla w\neq0$ the radial curvature equation can be rewritten as
\begin{equation*}
(\nabla_{\nabla r}P)=\frac{w}{mw^{\prime}}\left(P\circ(P-m\bar{k}I)\right).
\end{equation*}
Let $Y$ be a unit parallel vector field along an integral curve of $r$ and let $y(r)=P(Y,Y)$. So the above equation becomes
\begin{equation*}
y^{\prime}(r)=\frac{w}{mw^{\prime}}y(y-m\bar{k}).
\end{equation*}
Using that $w^{\prime\prime}=-\bar{k}w$ we see that this equation has the general nonzero solution
\begin{eqnarray*}
y\left(r\right) & = & m\bar{k}\frac{w^{\prime}}{w^{\prime}+A}\\
& = & \frac{m\bar{k}}{1+A\left(w^{\prime}\right)^{-1}}.
\end{eqnarray*}
Choose an orthonormal frame $\left\{ Y_{i}\right\} _{1}^{n-1}$ in the normal space of $\nabla r$ and assume that they are parallel vector fields along $\nabla r$. Suppose $y_{i}=P(Y_{i},Y_{i})$,
$i=1,2,\dots,p$, are the non-constant values, i.e.,
\begin{equation*}
y_{i}=\frac{m\bar{k}}{1+A_{i}z(r)},
\end{equation*}
where $z(r)=\left(w^{\prime}\right)^{-1}$ and $A_{i}$'s are nonzero constants. As $\mathrm{tr}(P)$ is constant, it follows that
\begin{equation*}
\sum_{i=1}^{p}y_{i}=0.
\end{equation*}
To see the constant on the right hand side is zero, let $r\rightarrow0$. Now plugging in the specific form for $y$ we obtain
\begin{equation*}
\frac{p+b_{1}z+b_{2}z^{2}+\dots+b_{p}z^{p}}{\Pi_{i=1}^{p}(1+A_{i}z)}=0,
\end{equation*}
where the coefficients $b_{i}$ can be derived from the binomial formula. However, unless $p=0$ this is a contradiction since $z(r)$ has the property that $\left\{ 1,z,z^{2},z^{3},\dots\right\} $ is a linearly independent set of functions on their intervals of definition.

Therefore, in all cases, $P(Y_{i},Y_{i})$ is constant and thus either $0$ or $\lambda-\rho$. It follows that the eigenvalues of $P$ are bounded by $0$ and $\lambda-\rho$. Using the identity $\vert P\vert^{2}=(\lambda-\rho)\mathrm{tr}P$ in Proposition \ref{propCSWconstantscal}, it is either $0$ or $\lambda-\rho$.
\end{proof}

From Proposition \ref{propwfunctions}, there are four different cases:
\begin{enumerate}
\item $w=\cos\left(\sqrt{\bar{k}}r\right)$, $M$ has boundary and $\bar{\mu}>0$,
\label{casewcos}
\item $w=\cosh\left(\sqrt{-\bar{k}}r\right)$, $M$ has no boundary and
$\bar{\mu}<0$, \label{casewcosh}
\item $w=\sinh\left(\sqrt{-\bar{k}}r\right)$, $M$ has boundary and $\bar{\mu}>0$,
\label{casewsinh}
\item $w=\exp\left(\sqrt{-\bar{k}}r\right)$, $M$ has no boundary and $\bar{\mu}=0$.
\label{casewexp}
\end{enumerate}
In the first two cases, the critical point set of $w$ is non-empty and the proof follows from a similar argument as in Ricci soliton case, see \cite{PWclassification}.

\begin{proof}[Proof of Theorem \ref{thmradialQ} in Case (\ref{casewcos}) and (\ref{casewcosh})]
This comes from considering the critical point set of $w$, $N=\set{x:\nabla w(x)=0}$. From Proposition \ref{propcriticalpoints} we see that $N$ is a totally geodesic $\lambda$-Einstein manifold.

The normal exponential map $\nu\left(N\right)\rightarrow M$ follows the integral curves for $\nabla w$(or $\nabla r$) and therefore is a diffeomorphism. Using the fundamental equations, see for example \cite[Chapter 2.5]{PetersenGTM}, the metric on $M$ is completely determined by the radial sectional curvatures and the metric on $N$ as $N \subset M$ is totally geodesic. Since these match exactly with the values in the corresponding rigid cases, the metric must be rigid.
\end{proof}

Now we are left with Case (\ref{casewsinh}) and (\ref{casewexp}). The proof of the theorem in these cases is much more involved since $N=\emptyset$.

From the explicit formula of the function $w$ the maximal interval $I$ of $r$ is either $(-\infty,\infty)$ or $[0,\infty)$. On this interval we can write the metric as
\begin{eqnarray*}
M & = & I\times\Sigma\\
g & = & \mathrm{d}r^{2}+g_{r}.
\end{eqnarray*}
From Lemma \ref{lemradialQtwoeigenvalues}, the tangent space of $\Sigma$ has an orthogonal splitting of eigen-distributions of $P$:
\begin{eqnarray*}
T\Sigma & = & \mathcal{N}\oplus\mathcal{P}\\
P|_{\mathcal{N}} & = & (\lambda-\rho)\mathrm{id}_{\mathcal{N}}\\
P|_{\mathcal{P}} & = & 0,
\end{eqnarray*}
and they are parallel along $\nabla r$.  

Theorem \ref{thmradialQ} will follow by showing that the distributions
are parallel, see Lemma \ref{lemparallelmupositive} for Case (\ref{casewsinh})
and Lemma \ref{lemparallelmuzero} for Case (\ref{casewexp}). In
Lemma \ref{lemparallelmuzero} we assume that the curvature does not
grow as fast as exponentially. In fact we can show that the eigen-distribution
of $P$ is integrable without this curvature growth assumption, see
Proposition \ref{propriccihypersurface} and Theorem \ref{thmRiccidistributionintegrable}
in Appendix A.

Before proceeding to the general case we note that these steps are considerably simpler in the harmonic curvature case. In fact, the argument in \cite[Proposition 16.11]{BesseEinstein} shows that the eigen-distributions of $P$ are always integrable in this case. Moreover, if the curvature is harmonic, then
\begin{equation*}
\left(\nabla_{E_{1}}\mathrm{Ric}\right)(E_{2},E_{3})-\left(\nabla_{E_{2}}\mathrm{Ric}\right)(E_{1},E_{3})= \mathrm{div}R(E_{1},E_{2},E_{3})=0
\end{equation*}
for any vector fields $E_{1}$, $E_{2}$ and $E_{3}$. The scalar curvature is also constant and thus Proposition \ref{propderivPQ} shows that $Q(E_{1},E_{2},E_{3},\nabla w)=0$. This explains why harmonic curvature is a stronger assumption than $Q(\nabla w,E,F,\nabla w)=0$. For any two eigenvector fields $X$ with $P(X)=(\lambda-\rho)(X)$ and $U$ with $P(U)=0$ the vanishing of $Q$ also implies that
\begin{equation*}
R(X,U,X,\nabla r)=R(U,X,U,\nabla r)=0.
\end{equation*}

On the other hand, we also have the following calculation. We will only use the first and third equations for the harmonic curvature case, but will find the other two equations useful later.

\begin{proposition} \label{propRadialCurvatureOperator} 
Suppose $X,Y$ and $U,V$ are $\lambda-\rho$ and 0 eigenvector fields of $P$ respectively. Then we have 
\begin{eqnarray}
R(X,U,Y,\nabla r) & = & -\bar{k}\frac{w}{w^{\prime}}g\left(\nabla_{X}Y,U\right)\label{eqnRXUX}\\
\left(\nabla_{\nabla r}R\right)(X,U,Y,\nabla r) & = & \frac{w^{\prime}}{w}R(X,U,Y,\nabla r)\label{eqnevolutionRXUX}\\
&  & +\bar{k}\frac{w}{w^{\prime}}\left(R(X,U,Y,\nabla r)+R(Y,U,X,\nabla r)\right)\notag\\
R(U,X,V,\nabla r) & = & \bar{k}\frac{w}{w^{\prime}}g\left(\nabla_{U}V,X\right)\label{eqnRUXU}\\
\left(\nabla_{\nabla r}R\right)(U,X,V,\nabla r) & = & \left(\frac{w^{\prime}}{w}+\bar{k}\frac{w}{w^{\prime}}\right)R(U,X,V,\nabla r)\label{eqnevolutionRUXU}\\
&  & +\bar{k}\frac{w}{w'}R(U,V,X,\nabla r).\notag
\end{eqnarray}
\end{proposition}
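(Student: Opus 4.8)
The plan is to derive all four formulas from the single Hessian identity $\nabla_E\nabla w=\frac{w}{m}(\mathrm{Ric}-\lambda I)(E)=\frac{w}{m}(P-m\bar{k}I)(E)$ together with the eigenvalue dichotomy of Lemma~\ref{lemradialQtwoeigenvalues}. For $X,Y\in\mathcal{N}$ and $U,V\in\mathcal{P}$ this immediately gives $\mathrm{Hess}\,w(X)=0$ and $\mathrm{Hess}\,w(U)=-\bar{k}wU$, while $P(\nabla w)=0$ makes $\nabla r$ a $0$-eigenfield. Comparing with $\nabla_E\nabla w=w''g(\nabla r,E)\nabla r+w'S(E)$, where $S=\mathrm{Hess}\,r$ is the shape operator of the level sets and $w''=-\bar{k}w$ by Proposition~\ref{propwfunctions}, yields the two structural facts $S(X)=0$ and $S(U)=-\bar{k}\frac{w}{w'}U$. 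These, plus the symmetries of $R$, are the only inputs.

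For the algebraic identities \eqref{eqnRXUX} and \eqref{eqnRUXU} I would feed the Hessian values into $R(A,B)\nabla w=\nabla_A\nabla_B\nabla w-\nabla_B\nabla_A\nabla w-\nabla_{[A,B]}\nabla w$. With $(A,B)=(X,U)$, using $\mathrm{Hess}\,w(X)=0$, $\mathrm{Hess}\,w(U)=-\bar{k}wU$, and $Xw=Uw=0$, the commutator term collapses and one gets $R(X,U)\nabla w=-\bar{k}w(\nabla_U X+\Pi_{\mathcal{N}}[X,U])$, where $\Pi_{\mathcal{N}}$ is orthogonal projection onto $\mathcal{N}$. Pairing with $Y\in\mathcal{N}$ via the antisymmetry $R(X,U,Y,\nabla w)=-g(R(X,U)\nabla w,Y)$, using $g(U,Y)=0$, and dividing by $w'$ produces exactly \eqref{eqnRXUX}. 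The identical computation with $(A,B)=(U,X)$ paired against $V\in\mathcal{P}$ gives \eqref{eqnRUXU}, the $\Pi_{\mathcal{N}}$-term now dropping out since it is orthogonal to $V$.

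For the evolution equations \eqref{eqnevolutionRXUX} and \eqref{eqnevolutionRUXU} I would exploit that both sides are tensorial, so it suffices to check them on eigenfields $X,Y,U,V$ extended to be parallel along the $\nabla r$-flow; this is legitimate because the eigendistributions are parallel along $\nabla r$ and the eigenvalues are constant. For such fields $(\nabla_{\nabla r}R)(\cdot)=\nabla r\big(R(\cdot)\big)$, so I simply differentiate \eqref{eqnRXUX} and \eqref{eqnRUXU} along $r$. The scalar factor uses $\frac{d}{dr}\!\left(\frac{w}{w'}\right)=1+\bar{k}\frac{w^2}{(w')^2}$, and the $g(\nabla_\bullet\bullet,\bullet)$ factor is differentiated through $\nabla_{\nabla r}\nabla_A B=R(\nabla r,A)B+\nabla_{[\nabla r,A]}B$ with $[\nabla r,A]=-S(A)$. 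In \eqref{eqnevolutionRXUX} the term $\nabla_{S(X)}Y$ vanishes since $S(X)=0$, leaving $R(\nabla r,X,Y,U)=-R(Y,U,X,\nabla r)$ by symmetry; in \eqref{eqnevolutionRUXU} the term $\nabla_{S(U)}V$ survives and supplies the extra $\bar{k}\frac{w}{w'}$ factor, while $R(\nabla r,U,V,X)=-R(V,X,U,\nabla r)$.

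Re-expressing the resulting $g(\nabla_\bullet\bullet,\bullet)$ terms back as radial curvatures via \eqref{eqnRXUX}--\eqref{eqnRUXU} then collects everything into the stated form. I expect the main obstacle to be the final bookkeeping in \eqref{eqnevolutionRUXU}, whose nontrivial cancellation closes only after invoking the first Bianchi relation $R(U,V,X,\nabla r)=R(U,X,V,\nabla r)-R(V,X,U,\nabla r)$; keeping the various $R(\nabla r,\cdot,\cdot,\cdot)$ components correctly routed through the Riemann symmetries is the delicate part rather than any conceptual difficulty. A secondary technical point is arranging the parallel extension off a single integral curve so that $\nabla_{\nabla r}B\equiv0$ holds in a full neighborhood, which is what makes the interchange of covariant differentiation with the radial derivative valid.
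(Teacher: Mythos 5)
Your proposal is correct, and for the two derivative identities it takes a genuinely different route from the paper. For the algebraic identities \eqref{eqnRXUX} and \eqref{eqnRUXU} the two arguments coincide in substance: the paper reads them off the Codazzi equation $R(X,U,Y,\nabla r)=\left(\nabla_{U}\mathrm{II}\right)(X,Y)-\left(\nabla_{X}\mathrm{II}\right)(U,Y)$ for the level hypersurfaces, which is exactly what your direct expansion of $R(A,B)\nabla w$ from the Hessian equation reproduces; the structural input ($\mathrm{II}$ vanishes on $\mathcal{N}$ and equals $-\bar{k}\frac{w}{w^{\prime}}$ times the metric on $\mathcal{P}$) is identical. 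For \eqref{eqnevolutionRXUX} and \eqref{eqnevolutionRUXU} the paper invokes the second Bianchi identity to trade $\left(\nabla_{\nabla r}R\right)(\cdot,\cdot,\cdot,\nabla r)$ for tangential derivatives of $R(\nabla r,\cdot,\cdot,\nabla r)$, which it then expands using the known radial curvatures $R(X,\nabla r)\nabla r=0$ and $R(U,\nabla r)\nabla r=\bar{k}U$; you instead differentiate the closed-form right-hand sides of \eqref{eqnRXUX} and \eqref{eqnRUXU} along the geodesic integral curves of $\nabla r$, commuting $\nabla_{\nabla r}$ past $\nabla_{X}$ and $\nabla_{U}$ via the Ricci identity and absorbing the bracket through $[\nabla r,A]=-\mathrm{Hess}\,r(A)$. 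I checked the bookkeeping: with $\frac{d}{dr}\left(\frac{w}{w^{\prime}}\right)=1+\bar{k}\frac{w^{2}}{(w^{\prime})^{2}}$ your route yields $\left(\nabla_{\nabla r}R\right)(U,X,V,\nabla r)=\left(\frac{w^{\prime}}{w}+2\bar{k}\frac{w}{w^{\prime}}\right)R(U,X,V,\nabla r)-\bar{k}\frac{w}{w^{\prime}}R(V,X,U,\nabla r)$, which agrees with \eqref{eqnevolutionRUXU} precisely by the first Bianchi identity you cite (the paper's own proof records this same equivalent form in its final line), and likewise \eqref{eqnevolutionRXUX} drops out after $R(\nabla r,X,Y,U)=-R(Y,U,X,\nabla r)$. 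What your approach buys is a more mechanical derivation that never leaves the radial direction and makes visible that only the shape operator and the components $R(\cdot,\cdot,\cdot,\nabla r)$ enter; what the paper's Bianchi route buys is that it works pointwise without extending the eigenframe parallelly off a single level set. Your closing remark about arranging $\nabla_{\nabla r}B\equiv0$ on a full neighborhood addresses the only genuine subtlety in your version, and it is legitimate since $\nabla_{\nabla w}P=0$ under the hypotheses, so parallel transport along $\nabla r$ preserves $\mathcal{N}$ and $\mathcal{P}$.
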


\begin{proof}
First note that 
\begin{equation*}
\mathrm{Hess}w=-\bar{k}w\mathrm{d}r\otimes\mathrm{d}r+w^{\prime}\mathrm{Hess}r.
\end{equation*}
Therefore the second fundamental form for the hypersurface $w^{-1}\left(r\right)$ is 
\begin{equation*}
\mathrm{II}=\mathrm{Hess}r=(w^{\prime})^{-1}\mathrm{Hess}w=\frac{w}{mw^{\prime}}\left(\mathrm{Ric}-\lambda g\right),
\end{equation*}
i.e., 
\begin{eqnarray*}
\mathrm{II}(X,X) & = & 0\\
\mathrm{II}(X,U) & = & 0\\
\mathrm{II}(U,U) & = & -\bar{k}\frac{w}{w^{\prime}}|U|^{2}.
\end{eqnarray*}
This implies 
\begin{eqnarray*}
R(X,\nabla r)\nabla r & = & 0\\
R(U,\nabla r)\nabla r & = & \bar{k}U,
\end{eqnarray*}
and
\begin{eqnarray*}
R(X,U,Y,\nabla r) & = & \left(\nabla_{U}\mathrm{II}\right)(X,Y)-\left(\nabla_{X}\mathrm{II}\right)(U,Y)\\
& = & \mathrm{II}(U,\nabla_{X}Y)\\
& = & -\bar{k}\frac{w}{w^{\prime}}g(\nabla_{X}Y,U).
\end{eqnarray*}
From the second Bianchi identity 
\begin{equation*}
\left(\nabla_{\nabla r}R\right)(X,U,Y,\nabla r)+\left(\nabla_{X}R\right)(U,\nabla r,Y,\nabla r)+\left(\nabla_{U}R\right)(\nabla r,X,Y,\nabla r)=0,
\end{equation*}
it follows then that
\begin{eqnarray*}
\left(\nabla_{\nabla r}R\right)(X,U,Y,\nabla r) & = & \left(\nabla_{X}R\right)(\nabla r,U,Y,\nabla r)-\left(\nabla_{U}R\right)(\nabla r,X,Y,\nabla r)\\
& = & -R\left(\nabla r,U,\nabla_{X}Y,\nabla r\right)+R\left(\nabla_{U}\nabla r,X,Y,\nabla r\right)+R\left(\nabla r,X,Y,\nabla_{U}\nabla r\right)\\
& = & \frac{1}{m}\left(\mathrm{Ric}(U,\nabla_{X}Y)-\lambda g(U,\nabla_{X}Y)\right)-\bar{k}\frac{w}{w^{\prime}}R(U,X,Y,\nabla r)-\bar{k}\frac{w}{w^{\prime}}R(\nabla r,X,Y,U)\\
& = & -\bar{k}g(U,\nabla_{X}Y)+\bar{k}\frac{w}{w^{\prime}}\left(R(X,U,Y,\nabla r)+R\left(Y,U,X,\nabla r\right)\right)\\
& = & \frac{w^{\prime}}{w}R(X,U,Y,\nabla r)+\bar{k}\frac{w}{w^{\prime}}\left(R(X,U,Y,\nabla r)+R(Y,U,X,\nabla r)\right).
\end{eqnarray*}
An analogous argument gives us the other equations:
\begin{eqnarray*}
R\left(U,X,V,\nabla r\right) & = & \left(\nabla_{X}\mathrm{II}\right)\left(U,V\right)-\left(\nabla_{U}\mathrm{II}\right)\left(X,V\right)\\
& = & \mathrm{II}\left(\nabla_{U}X,V\right)\\
& = & -\bar{k}\frac{w}{w^{\prime}}g\left(\nabla_{U}X,V\right)\\
& = & \bar{k}\frac{w}{w^{\prime}}g\left(X,\nabla_{U}V\right)
\end{eqnarray*}
and 
\begin{eqnarray*}
\left(\nabla_{\nabla r}R\right)(U,X,V,\nabla r) & = & \left(\nabla_{U}R\right)(\nabla r,X,V,\nabla r)-\left(\nabla_{X}R\right)(\nabla r,U,V,\nabla r)\\
& = & -R(\nabla r,\nabla_{U}X,V,\nabla r)-R(\nabla_{U}\nabla r,X,V,\nabla r)-R(\nabla r,X,V,\nabla_{U}\nabla r)\\
& = & \frac{1}{m}\left(\mathrm{Ric}(V,\nabla_{U}X)-\lambda g(V,\nabla_{U}X)\right)+\bar{k}\frac{w}{w^{\prime}}R(U,X,V,\nabla r)+\bar{k}\frac{w}{w'}R(\nabla r,X,V,U)\\
& = & -\bar{k}g\left(V,\nabla_{U}X\right)+\bar{k}\frac{w}{w^{\prime}}R(U,X,V,\nabla r)+\bar{k}\frac{w}{w'}R(U,V,X,\nabla r)\\
& = & \bar{k}g\left(\nabla_{U}V,X\right)+\bar{k}\frac{w}{w^{\prime}}R(U,X,V,\nabla r)+\bar{k}\frac{w}{w'}R(U,V,X,\nabla r)\\
& = & \left(\frac{w'}{w}+\bar{k}\frac{w}{w^{\prime}}\right)R(U,X,V,\nabla r)+\bar{k}\frac{w}{w'}R(U,V,X,\nabla r)\\
& = & \frac{w'}{w}R(U,X,V,\nabla r)+\bar{k}\frac{w}{w'}\left(R(U,X,V,\nabla r)-R(V,X,U,\nabla r)\right).
\end{eqnarray*}
These give us the desired equations.
\end{proof}

We can now finish the proof when the curvature is harmonic.

\begin{proof}[Proof of Corollary \protect\ref{corharmonic}] 
As the curvature is harmonic we have seen above that $R(X,U,X,\nabla r)=0$ and $R(U,X,U,\nabla r)=0$. When combined with the previous proposition, this tells us that the eigen-distributions are totally geodesic. This gives a splitting of the universal cover along the eigen-distributions of $P$. The results from section 2 tell us the metric is rigid.
\end{proof}

\smallskip{}

\begin{lemma} \label{lemparallelmupositive} 
If $M$ has non-empty boundary then the two eigen-distributions of $P$ are parallel. 
\end{lemma}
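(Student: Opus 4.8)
The plan is to prove that the two eigen-distributions are \emph{parallel}; once that is known, the de Rham decomposition theorem splits the universal cover and the results of Section~2 (Proposition~\ref{proprigidnontrivial}) identify the factors as a rigid example, exactly as in the harmonic case. Write $\mathcal{N}$ for the $(\lambda-\rho)$-eigendistribution and $\mathcal{P}\oplus\mathbb{R}\nabla r$ for the $0$-eigendistribution (recall $P(\nabla r)=0$). The second fundamental form computed in Proposition~\ref{propRadialCurvatureOperator} gives $\mathrm{II}(X,Y)=\mathrm{II}(X,U)=0$ and $\mathrm{II}(U,U)=-\bar{k}\frac{w}{w^{\prime}}|U|^{2}$ for $X,Y\in\mathcal{N}$ and $U,V\in\mathcal{P}$, so all radial components $g(\nabla_{Z}Y,\nabla r)$ with $Y\in\mathcal{N}$ vanish. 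Hence parallelism of the distributions reduces, via (\ref{eqnRXUX}) and (\ref{eqnRUXU}), to the vanishing of the two mixed families $R(X,U,Y,\nabla r)$ and $R(U,X,V,\nabla r)$, since these control precisely the off-diagonal pieces $g(\nabla_{X}Y,U)$ and $g(\nabla_{U}V,X)$.

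First I would record the boundary condition. In this case $w=\sinh(\sqrt{-\bar{k}}\,r)$, so $w=0$ while $w^{\prime}\neq0$ on $\partial M=\{r=0\}$; the factor $\frac{w}{w^{\prime}}$ in (\ref{eqnRXUX}) and (\ref{eqnRUXU}) therefore vanishes there, so both mixed families vanish on $\partial M$. Next I would treat (\ref{eqnevolutionRXUX}) and (\ref{eqnevolutionRUXU}) as a linear system of ODEs for these components along the unit-speed radial geodesics issuing orthogonally from $\partial M$, along which the eigen-distributions are preserved by parallel transport. Splitting each family into its symmetric and antisymmetric parts in the two eigenvector slots decouples the system: the coupling in (\ref{eqnevolutionRXUX}) is only between $R(X,U,Y,\nabla r)$ and $R(Y,U,X,\nabla r)$, and the scalar equations integrate with factors built from $w$ and $w^{\prime}$.

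The antisymmetric part is exactly the integrability defect of the eigen-distributions: by the first Bianchi identity it is a multiple of $g([X,Y],U)$. This is where Appendix~A enters, as Theorem~\ref{thmRiccidistributionintegrable} shows the eigen-distributions of $P$ are integrable with no curvature-growth hypothesis; hence the antisymmetric part vanishes identically, and the symmetric part $f=R(X,U,X,\nabla r)$ is left solving $f^{\prime}=\bigl(\frac{w^{\prime}}{w}+2\bar{k}\frac{w}{w^{\prime}}\bigr)f$, whose nonzero solutions are $f=c\,w/(w^{\prime})^{2}$.

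The main obstacle is excluding this symmetric mode. The zero boundary value alone is not enough, since every solution $c\,w/(w^{\prime})^{2}$ already vanishes at $\{w=0\}$ (the point $r=0$ is a regular singular point of the ODE, so the condition at the boundary is degenerate), and the mode is bounded and in fact decays as $r\to\infty$; thus neither the boundary nor infinity forces $c=0$ on its own. To close this I would bring in the global structure of $P$. Proposition~\ref{propCSWconstantscal} gives $\mathrm{div}\,P=0$, and tracing the relevant components shows that the symmetric tensor $S(X,Y)=\mathrm{proj}_{\mathcal{P}}\nabla_{X}Y$ is trace-free, i.e.\ the $\mathcal{N}$-leaves are minimal. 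Combining trace-freeness with the explicit profile $|S|^{2}\sim c/(w^{\prime})^{2}$ and a Bochner/integration argument for the divergence-free tensor $P$ — using that $|P|^{2}$ is constant, so that $\int_{M}|\nabla P|^{2}$ is governed only by curvature terms together with a boundary integral over the totally geodesic $\partial M$, where $w=0$ — should force $\nabla P=0$, hence $S\equiv0$, and symmetrically for the complementary distribution. I expect the delicate step to be making the Weitzenb\"ock boundary term vanish and controlling the curvature terms; once $\nabla P=0$ the eigen-distributions are parallel and the rigidity conclusion follows as above.
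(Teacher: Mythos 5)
Your reduction coincides with the paper's: everything hinges on showing that the mixed components $R(X,U,Y,\nabla r)$ and $R(U,X,V,\nabla r)$ vanish, using the evolution equations (\ref{eqnevolutionRXUX}) and (\ref{eqnevolutionRUXU}) along the radial geodesics together with the fact that $\tfrac{w}{w'}=0$ on $\partial M$ forces these components to vanish at $r=0$. The paper's proof is the direct version of this: set $X=Y$ (resp.\ $U=V$) to decouple a scalar first-order linear ODE for the diagonal component, conclude it vanishes from the boundary data, deduce skew-symmetry in the two like-type slots, and then run a second ODE for the skew part. Your observation that $r=0$ is a singular point of these ODEs --- the nonzero solutions $c\,w/(w')^{2}$, $c\,w$, $c\,w/w'$ all vanish where $w=0$, so the boundary value alone does not determine the constant $c$ --- is a substantive point about that argument, and your use of Theorem \ref{thmRiccidistributionintegrable} to dispose of the antisymmetric (integrability) part is legitimate and consistent with how Appendix A is advertised.

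The gap is precisely at the step you label the main obstacle: you do not actually exclude the symmetric mode. The Bochner/integration plan for the divergence-free tensor $P$ is only a sketch. On $M=[0,\infty)\times\Sigma$ the manifold is non-compact (and $\Sigma$ need not be compact), so neither the convergence of $\int_{M}|\nabla P|^{2}$ nor the vanishing of the terms at infinity is under control; the curvature terms in the Weitzenb\"ock formula for a divergence-free symmetric $2$-tensor carry no sign here (the whole point of the lemma is that we do not yet know the curvature splits); and minimality of the $\mathcal{N}$-leaves ($\mathrm{tr}\,S=0$, which does follow from $\mathrm{div}P=0$) combined with the profile $|S|^{2}\sim c/(w')^{2}$ does not force $S=0$ --- a trace-free second fundamental form can be nonzero. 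You explicitly defer ``making the Weitzenb\"ock boundary term vanish and controlling the curvature terms,'' but that deferred step \emph{is} the content of the lemma, so the argument is incomplete. A workable route closer to the paper's framework would be to exploit, as in Lemma \ref{lemparallelmuzero}, the explicit $r$-dependence of the connection coefficients coming from the metric form $g=\mathrm{d}r^{2}+h_{0}+(w')^{2}h_{1}$ (Koszul's formula), which pins down the admissible growth of $g(\nabla_{X}Y,U)$ and $g(\nabla_{U}V,X)$ and can be played against the ODE solutions; as it stands, your proposal does not close the lemma.
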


\begin{proof}
As $\nabla_{\nabla r}P=0$, we can assume that the eigenvector fields $X,Y,\ldots\in\mathcal{N}$ and $U,V,\ldots\in\mathcal{P}$ are parallel along $\nabla r$. The fact that the boundary is nonempty is used as an initial value for the curvatures. We assume that the boundary corresponds to the level set $r=0$. Specifically we see that Proposition \ref{propRadialCurvatureOperator} implies that $R(E_{1},E_{2},E_{3},\nabla r)=0$ on the boundary. If we set $X=Y$ in (\ref{eqnRXUX}) and (\ref{eqnevolutionRXUX}), then we obtain
\begin{eqnarray*}
\left(\nabla_{\nabla r}R\right)(X,U,X,\nabla r) & = & \left(\frac{w^{\prime}}{w}+2\bar{k}\frac{w}{w^{\prime}}\right)R(X,U,X,\nabla r)\\
R(X,U,X,\nabla r) & = & 0\quad\mathrm{at\:}r=0.
\end{eqnarray*}
So it follows that $R(X,U,X,\nabla r)=0$ on $M$. This shows that
$R\left(X,U,Y,\nabla r\right)$ is skew-symmetric in $X,Y$ and hence
that
\begin{equation*}
\left(\nabla_{\nabla r}R\right)(X,U,Y,\nabla r)=\frac{w^{\prime}}{w}R(X,U,Y,\nabla r).
\end{equation*}
We can then similarly conclude that $R(X,U,Y,\nabla r)$ vanishes as long as it vanishes on the boundary. Using (\ref{eqnRXUX}) again this shows that $\mathcal{N}$ is integrable as well as totally geodesic.

A similar argument works for $\mathcal{P}$. Setting $U=V$ in (\ref{eqnevolutionRUXU}) we have 
\begin{equation*}
\left(\nabla_{\nabla r}R\right)(U,X,U,\nabla r)=\left(\frac{w'}{w}+\bar{k}\frac{w}{w^{\prime}}\right)R(U,X,U,\nabla r).
\end{equation*}
So $R(U,X,U,\nabla r)$ vanishes as it vanishes on the boundary. This implies that $R(U,X,V,\nabla r)$ is skew-symmetric in $U$ and $V$, which in turn shows that 
\begin{eqnarray*}
R\left(U,V,X,\nabla r\right) & = & R(U,X,V,\nabla r)-R(V,X,U,\nabla r)\\
& = & 2R(U,X,V,\nabla r)
\end{eqnarray*}
and consequently 
\begin{equation*}
\left(\nabla_{\nabla r}R\right)(U,X,V,\nabla r)=\left(\frac{w'}{w}+3\bar{k}\frac{w}{w'}\right)R(U,X,V,\nabla r).
\end{equation*}
As $R(U,X,V,\nabla r)$ vanishes on the boundary, it vanishes everywhere showing that $\mathcal{P}$ is totally geodesic.
\end{proof}

\smallskip{}

\begin{lemma} \label{lemparallelmuzero} 
If $M$ has no boundary, i.e., $\bar{\mu}=0$, and we further assume that $\left|R\right|=o\left(\exp\left(dist(x,p)\sqrt{-\bar{k}}\right)\right)$, then the two eigenvalue distributions for $P$ are parallel.
\end{lemma}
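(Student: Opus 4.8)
The plan is to run the curvature-evolution scheme of Lemma \ref{lemparallelmupositive}, but to replace the boundary initial condition (now unavailable) by the growth hypothesis. The key simplification in Case (\ref{casewexp}) is that $w=\exp\left(\sqrt{-\bar{k}}\,r\right)$ makes the coefficients in Proposition \ref{propRadialCurvatureOperator} constant: writing $a=\sqrt{-\bar{k}}$ we have $\frac{w'}{w}=a$ and $\bar{k}\frac{w}{w'}=-a$. As in Lemma \ref{lemparallelmupositive} I choose eigenframes $X,Y,\ldots\in\mathcal{N}$ and $U,V,\ldots\in\mathcal{P}$ parallel along $\nabla r$, so that each mixed curvature component satisfies a constant-coefficient linear ODE in $r$ along every radial geodesic, and I note that the interval of definition is all of $(-\infty,\infty)$.

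First I dispose of $\mathcal{N}$. Setting $X=Y$ in (\ref{eqnevolutionRXUX}) gives $\partial_{r}R(X,U,X,\nabla r)=-a\,R(X,U,X,\nabla r)$, whose solutions are multiples of $e^{-ar}$ and hence grow like $e^{a|r|}$ as $r\to-\infty$; the hypothesis $\left|R\right|=o\left(e^{a\,\mathrm{dist}}\right)$ forces the multiple to vanish, so $R(X,U,X,\nabla r)\equiv0$. This makes $R(X,U,Y,\nabla r)$ skew in $X,Y$, whence (\ref{eqnevolutionRXUX}) collapses to $\partial_{r}R(X,U,Y,\nabla r)=a\,R(X,U,Y,\nabla r)$, with solutions $\propto e^{ar}$, ruled out as $r\to+\infty$. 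Thus $R(X,U,Y,\nabla r)\equiv0$, and by (\ref{eqnRXUX}) this says $g(\nabla_{X}Y,U)=0$; since $\mathrm{II}$ vanishes on $\mathcal{N}$ we also get $g(\nabla_{X}Y,\nabla r)=0$, so $\mathcal{N}$ is totally geodesic and radially parallel.

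The real difficulty is $\mathcal{P}$. Setting $V=U$ in (\ref{eqnevolutionRUXU}) and using $R(U,U,X,\nabla r)=0$ gives $\partial_{r}R(U,X,U,\nabla r)=\left(\frac{w'}{w}+\bar{k}\frac{w}{w'}\right)R(U,X,U,\nabla r)=0$: in this case the coefficient is \emph{exactly} zero, so $R(U,X,U,\nabla r)$ is constant along each radial geodesic and the growth hypothesis says nothing about it. Decomposing $S(U,V):=R(U,X,V,\nabla r)$ into symmetric and skew parts in $U,V$, and using the first Bianchi identity to identify $R(U,V,X,\nabla r)$ with the skew part, (\ref{eqnevolutionRUXU}) shows the skew part satisfies $\partial_{r}(\cdot)=-2a(\cdot)$ and is killed by growth exactly as before (this is also the unconditional integrability statement of Theorem \ref{thmRiccidistributionintegrable} and Proposition \ref{propriccihypersurface} in Appendix A). What survives is the symmetric, radially-constant tensor $S$, and by (\ref{eqnRUXU}) its vanishing is equivalent to $g(\nabla_{U}V,X)=0$, i.e.\ to $\mathcal{P}\oplus\mathbb{R}\nabla r$ being totally geodesic. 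Once that is known, both eigen-distributions of $P$ are totally geodesic, hence parallel, and the splitting together with the results of Section 2 gives rigidity.

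I expect the vanishing of this symmetric constant to be the main obstacle, precisely because the radial ODE system decouples on $S$ and leaves its symmetric part untouched, so growth cannot act on it directly. The plan is to exploit growth \emph{indirectly}: feed the constancy of $R(U,X,U,\nabla r)$ through the second Bianchi identity into the radial evolution of a tangential curvature such as the mixed sectional curvature $R(U,X,U,X)$, where a nonnegative quadratic expression in $S$ appears as an inhomogeneous source. The growth bound eliminates the exponentially growing homogeneous modes of this coupled system, which should pin the geometry to an equilibrium (twisted-product) configuration; the remaining structure equations then force that configuration to be the split one, i.e.\ $S\equiv0$. Carrying out this last step carefully---controlling the tangential covariant-derivative terms that accompany the quadratic source---is the technically delicate heart of the argument, and is the only place where the exponential growth hypothesis is genuinely used.
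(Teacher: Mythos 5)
Your treatment of $\mathcal{N}$ is exactly the paper's: for $w=\exp\left(\sqrt{-\bar{k}}\,r\right)$ the ODEs from Proposition \ref{propRadialCurvatureOperator} have constant coefficients, the growth bound kills $R(X,U,X,\nabla r)$ and then $R(X,U,Y,\nabla r)$, and $\mathcal{N}$ is totally geodesic. You have also correctly located the crux: the coefficient $\frac{w'}{w}+\bar{k}\frac{w}{w'}$ in (\ref{eqnevolutionRUXU}) vanishes identically in this case, so the symmetric part of $S(U,V):=R(U,X,V,\nabla r)$ is radially constant and the growth hypothesis cannot act on it directly, while the skew part (equivalently, by (\ref{eqnRUXU}) and the first Bianchi identity, the integrability of $\mathcal{P}$) behaves like $e^{-2\sqrt{-\bar{k}}\,r}$ and is forced to vanish. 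Up to that point your argument is sound and matches the paper, which reaches the same integrability conclusion by expanding $g(\nabla_{U}V,X)$ with Koszul's formula in the warped metric and isolating a $(w')^{-2}$ term.

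The gap is that you never prove the symmetric part of $S$ vanishes; you only announce a plan and explicitly defer ``the technically delicate heart of the argument.'' That deferred step \emph{is} the lemma. Your proposed route --- a radial ODE for $R(U,X,U,X)$ with a quadratic source --- would, via the second Bianchi identity, bring in tangential covariant derivatives of the curvature that nothing in the hypotheses controls, and it is not clear the system closes. The paper closes the gap by a different mechanism: with only integrability of $\mathcal{P}$ and total geodesy of $\mathcal{N}$ in hand, the foliation with vertical space $\left\{\nabla r\right\}\oplus\mathcal{P}$ is Riemannian, so Blumenthal--Hebda gives a global splitting $F:B\times H\rightarrow M$; then $D_{\nabla r}R(V,U,U,V)$ is computed for $U,V\in\mathcal{P}$ and the growth bound is used once more to force $R(V,U,U,V)$ to equal the radially constant expression (\ref{eqnKUVsecondff}), which identifies every fiber with the simply connected hyperbolic space of curvature $\bar{k}$. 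Total geodesy of the fibers (i.e.\ $S\equiv0$) then follows from a symmetry argument --- isometries of $(H,h)$ extend to $M$ preserving fibers, the isotropy action is transitive on unit tangent vectors, and the radial geodesics already lie in the fibers --- not from an ODE on $S$. To complete your proof you would need either to carry out your tangential-evolution scheme in full, with genuine control of the derivative terms, or to adopt something like this global argument.
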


\begin{proof}
We now turn to Case (\ref{casewexp}) where $w=\exp\left(\sqrt{-\bar{k}}r\right)$.
This means that equation (\ref{eqnevolutionRXUX}) reduces to 
\begin{eqnarray*}
D_{\nabla r}R(X,U,Y,\nabla r) & = & \sqrt{-\bar{k}}R(X,U,Y,\nabla r)-\sqrt{-\bar{k}}\left(R(X,U,Y,\nabla r)+R(Y,U,X,\nabla r)\right),
\end{eqnarray*}
i.e., 
\begin{equation*}
R(X,U,X,\nabla r)=R(X,U,X,\nabla r)|_{r=0}\exp\left(-\sqrt{-\bar{k}}r\right).
\end{equation*}
In particular we see again that $R(X,U,X,\nabla r)$ must vanish if we assume that it cannot grow as fast as $\exp\left(\sqrt{-\bar{k}}r\right).$ This again shows that $R\left(X,U,Y,\nabla r\right)$ is skew-symmetric in $X,Y$ and thus
\begin{eqnarray*}
D_{\nabla r}R(X,U,Y,\nabla r) & = & \sqrt{-\bar{k}}R(X,U,Y,\nabla r)-\sqrt{-\bar{k}}\left(R(X,U,Y,\nabla r)+R(Y,U,X,\nabla r)\right)\\
& = & \sqrt{-\bar{k}}R(X,U,Y,\nabla r).
\end{eqnarray*}
Using the growth assumption for $R(X,U,Y,\nabla r)$ this in turn shows that $R(X,U,Y,\nabla r)$ vanishes and then that $\mathcal{N}$ is totally geodesic.

Next we note that $R(U,X,V,\nabla r)=\bar{k}\frac{w}{w^{\prime}}g\left(\nabla_{U}V,X\right)$.
The right hand side can be calculated using Koszul's formula together with the metric decomposition \begin{eqnarray*}
g & = & \mathrm{d}r^{2}+g_{r}\\
& = & \mathrm{d}r^{2}+h_{0}+\frac{\left(w'\left(r\right)\right)^{2}}{\left(w'\left(0\right)\right)^{2}}h_{1},
\end{eqnarray*}
where $h_{0}$ and $h_{1}$ are the restrictions of $g_{0}$ to $\mathcal{N}$ and $\mathcal{P}$ at $r=0$ respectively.

We wish to calculate $g\left(\nabla_{U}V,X\right)$ and relate it to what happens at $r=0$. This requires that the fields we use commute with $\nabla r$. As our fields are chosen to be parallel along $\nabla r$ we simply have to switch to $X,w'U,w'V$ instead. After eliminating $\left(w'\right)^{2}$ on both sides this yields the formula 
\begin{eqnarray*}
g(\nabla_{U}V,X) & = & g_{r}(\nabla_{U}V,X)\\
& = & g_{0}(\nabla_{U}V,X)+\left(\frac{1}{2}-\frac{1}{2\left(w'\left(r\right)\right)^{2}}\right)g_{0}([V,U],X).
\end{eqnarray*}
Thus 
\begin{eqnarray*}
R(U,X,V,\nabla r) & = & \bar{k}\frac{w}{w'}\left(g_{0}(\nabla_{U}V,X)+\left(\frac{1}{2}- \frac{1}{2\left(w'\left(r\right)\right)^{2}}\right)g_{0}([V,U],X)\right)
\end{eqnarray*}
is forced to grow like $\left(w'\right)^{-2}=\left|\bar{k}\right|\exp\left(-2r\sqrt{-\bar{k}}\right)$ unless $\left[U,V\right]$ is perpendicular to $X$. As we have assumed that the curvature grows slower than $\exp\left(\sqrt{-\bar{k}}r\right)$ this shows that $\mathcal{P}$ is an integrable distribution.

Having shown that $\mathcal{N}$ is totally geodesic and its orthogonal distribution $\left\{ \nabla r\right\} \oplus\mathcal{P}$ is integrable it follows that the foliation with vertical space given by $\left\{ \nabla r\right\} \oplus\mathcal{P}$ is Riemannian. This means that we can use \cite{BlumenthalHebda} to conclude that there is a map $F:B\times H\rightarrow M$ such that
$F\left(B\times\left\{ q\right\} \right)$ is an integral manifold for $\mathcal{N}$ for all $q\in H$ and similarly $F\left(\left\{ p\right\} \times H\right)$ is an integral manifold for $\left\{ \nabla r\right\} \oplus\mathcal{P}$ for all $p\in B$. Below we shall show that the fibers are all isometric to each other and are in fact a simply connected hyperbolic space of constant curvature $\bar{k}.$

Note that if $X$ is chosen to be basic along this Riemannian foliation then $[X,U]\in\mathcal{P}$ for any $U\in\mathcal{P}$. As $\mathcal{N}$ is totally geodesic we have $\nabla_{X}U\in\mathcal{P}$, consequently also $\nabla_{U}X\in\mathcal{P}$. Let $U,V\in\mathcal{P}$ with $g(U,V)=0$, then
\begin{eqnarray*}
D_{\nabla r}R(V,U,U,V) & = & \left(\nabla_{V}R\right)(V,U,U,\nabla r)-\left(\nabla_{U}R\right)(V,U,V,\nabla
r)\\
& = & -R\left(\nabla_{V}V,U,U,\nabla r\right)-R\left(V,\nabla_{V}U,U,\nabla r\right)\\
&  & -R\left(V,U,\nabla_{V}U,\nabla r\right)-R\left(V,U,U,\nabla_{V}\nabla r\right)\\
&  & +R\left(\nabla_{U}V,U,V,\nabla r\right)+R\left(V,\nabla_{U}U,V,\nabla r\right)\\
&  & +R\left(V,U,\nabla_{U}V,\nabla r\right)+R\left(V,U,V,\nabla_{U}\nabla r\right)\\
& = & -\sqrt{-\bar{k}}g\left(\mathcal{N}\left(\nabla_{V}V\right),\nabla_{U}U\right)+R(\nabla r,U,U,\nabla
r)\mathrm{II}(V,V) \\
& & +\sqrt{-\bar{k}}g\left(\mathcal{N}\left(\nabla_{V}U\right),\nabla_{V}U\right) +\sqrt{-\bar{k}}g\left(\mathcal{N}\left(\nabla_{U}V\right),\nabla_{U}V\right) \\
& & - \sqrt{-\bar{k}}g\left(\mathcal{N}\left(\nabla_{U}U\right),\nabla_{V}V\right)-R(V,\nabla
r,V,\nabla r)\mathrm{II}(U,U)\\
&  & -2\sqrt{-\bar{k}}R\left(V,U,U,V\right)\\
& = &
-2\sqrt{-\bar{k}}g\left(\mathcal{N}\left(\nabla_{V}V\right),\mathcal{N}\left(\nabla_{U}U\right)\right)+ 2\sqrt{-\bar{k}}\left\vert
\mathcal{N}\left(\nabla_{V}U\right)\right\vert ^{2}\\
&  & -2\sqrt{-\bar{k}}R\left(V,U,U,V\right)+2\bar{k}\sqrt{-\bar{k}}g(U,U)g(V,V).
\end{eqnarray*}
Here the term from the second fundamental form
\begin{equation}
g\left(\mathcal{N}\left(\nabla_{V}V\right),\mathcal{N}\left(\nabla_{U}U\right)\right)-\left\vert \mathcal{N}\left(\nabla_{V}U\right)\right\vert ^{2}-\bar{k}g(U,U)g(V,V)\label{eqnKUVsecondff}
\end{equation}
is constant in the $\nabla r$ direction. Thus $R\left(V,U,U,V\right)$ will grow exponentially unless it is constant and precisely cancels the term in (\ref{eqnKUVsecondff}), i.e.,
\begin{equation*}
R\left(V,U,U,V\right)=\left\vert \mathcal{N}\left(\nabla_{V}U\right)\right\vert
^{2}-g\left(\mathcal{N}\left(\nabla_{V}V\right),\mathcal{N}\left(\nabla_{U}U\right)\right)+\bar{k}g(U,U)g(V,V).
\end{equation*}
This indicates that $\mathcal{P}$ is flat in $M$ and more importantly that the fibers are isometric to the simply-connected hyperbolic space with curvature $\bar{k}$ which we denote by $(H,h)$.

We then conclude that the structure $F:B\times H\rightarrow M$ gives us a metric decomposition of the form 
\begin{equation*}
F^{\ast}\left(g\right)=h_{0}+f_{b}^{\ast}\left(h\right)
\end{equation*}
where $f_{b}:H\rightarrow H$ is a family of isometries parametrized by the base space $B.$ From this it follows that any isometry of $\left(H,h\right)$ extends to an isometry of $M$ that fixes the horizontal space $B$ and maps a fiber to itself. Recall that we have a distance function $r$ such that $\nabla r$ is tangent to the fibers. This means that a geodesic $\gamma$ with velocity vector $\nabla r$ stays in the fiber. Using an isometry $f\in\mathrm{Iso}\left(H,h\right)$
then yields a new geodesic $f\circ\gamma$ with velocity $Df\left(\nabla r\right)$ that also stays in the fiber. As the isometry group of hyperbolic space is isotropic, i.e., the isotropy action is transitive on the unit sphere in the tangent space, it follows that any geodesic tangent
to a fiber will stay in that fiber, i.e., the fiber is totally geodesic.
\end{proof}

Now we finish the proof of Theorem \ref{thmradialQ}.

\begin{proof}[Proof of Theorem \protect\ref{thmradialQ}]
Suppose $(M,g,w)$ is a non-trivial $(\lambda,n+m)$-Einstein manifold. By the definition
of $Q$, the radial section curvature condition (\ref{eqnradialseccurvature}) is equivalent to $Q(\nabla w,\cdot,\cdot,\nabla w)=0$. From Lemma \ref{lemradialQtwoeigenvalues}, the eigenvalues of $P$ are either $0$ or $\lambda-\rho$. From Lemma \ref{lemparallelmupositive} and \ref{lemparallelmuzero}, the distribution for each eigenvalue is parallel. Hence the manifold splits as $(M,g)=(M_{1}\times M_{2},g_{1}+g_{2})$ such that $\mathrm{Ric}_{g_{1}}=\lambda g_{1}$ and $\mathrm{Ric}_{g_{2}}=\rho g_{2}$. So the rigidity of $M$ follows from Proposition \ref{proprigidnontrivial}.
\end{proof}

\begin{remark}
We have only used the assumption about exponential growth of curvature in the special case when $\bar{\mu}=0$ and the potential function is $w=\exp\left(\sqrt{-\bar{k}}r\right)$. Moreover,
from the proof of Lemma \ref{lemparallelmuzero}, we can see that in this case we get a stronger conclusion than rigidity. Namely that the manifold $M$ splits isometrically as $L\times H$ where $L$ is $\lambda$-Einstein and $H$ is the simply-connected hyperbolic space with Ricci curvature
$\rho$.
\end{remark}

\medskip{}

\section{Non-rigid $(\lambda,n+m)$-Einstein metrics on solvable Lie groups}

In this section, we consider $(\lambda,n+m)$-Einstein metrics on homogeneous spaces. The general case of the $(\lambda,n+m)$-Einstein equation is studied in \cite{HPWVirtual} where the function $w$ is also allowed to take negative values. Using this generalized equation, we obtain

\begin{theorem} \label{thmhomogeneousmu0}
If $(M,g)$ be a simply connected homogeneous non-trivial $(\lambda,m+n)$-Einstein manifold
with $m>1$ and $\lambda<0$, then one of the following cases holds.
\begin{enumerate}
\item $\mu<0$ and $M=S\times L$ as a Riemannian product where $S$ is a space form that is $\rho$-Einstein and $L$ is $\lambda$-Einstein. 
\item $\mu=0$ and there are no other solutions to the $\lambda$-Einstein equations with $\mu<0,$ then the isometry group $\mathrm{Iso}(M)$ contains a codimension one normal subgroup $H$ that acts transitively on the connected components of the level set of $w$. Moreover if $F$ is an isometry of $M$, then $w\circ F=Cw$ for some constant $C$.
\end{enumerate}
\end{theorem}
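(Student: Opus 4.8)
The plan is to reduce to the constant-scalar-curvature theory of Section 3 and then exploit the two features homogeneity adds: the eigenvalues of $\Ric$ are constant, and the isometry group acts on the \emph{entire} space of solutions. Since a homogeneous metric has constant scalar curvature, Corollary \ref{cordiffrholambda} gives $\bar{k}=\frac{\lambda-\rho}{m}<0$, and $P=\Ric-\rho g$ has constant eigenvalues. Working with the generalized equation of \cite{HPWVirtual}, where $w$ may change sign, the equation $m\Hess w=w(\Ric-\lambda g)$ is linear in $w$, so the set $V$ of solutions is a finite-dimensional real vector space. By (\ref{eqnmubar}) the quantity $\bar{\mu}(u)=\bar{k}u^{2}+|\nabla u|^{2}$ is constant on $M$ for each $u\in V$ (the same $\bar{k}$ serves all solutions), so $u\mapsto\bar{\mu}(u)$ is a quadratic form on $V$ with polarization $B(u_{1},u_{2})=\bar{k}u_{1}u_{2}+g(\nabla u_{1},\nabla u_{2})$. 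For $F\in\mathrm{Iso}(M)$ the function $u\circ F$ is again a solution and $\bar{\mu}(u\circ F)=\bar{\mu}(u)$, so $\mathrm{Iso}(M)$ acts linearly on $V$ preserving $\bar{\mu}$. I would organize the whole proof around the signature of this form.

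For alternative (1), suppose some $u\in V$ has $\bar{\mu}(u)<0$. By Proposition \ref{propwfunctions}, $u=C\cosh(\sqrt{-\bar{k}}\,r)$, and since $|\nabla u|^{2}=\bar{\mu}(u)+|\bar{k}|u^{2}$ vanishes where $u^{2}=\bar{\mu}(u)/\bar{k}$, the critical set of $u$ is non-empty. At a critical point Corollary \ref{corPidentitynablawzero} gives $P\circ(P-(\lambda-\rho)I)=0$, and because the eigenvalues of $P$ are constant this identity holds on all of $M$; thus $P$ has only the eigenvalues $0$ and $\lambda-\rho$, with orthogonal eigendistributions $\mathcal{P}$ and $\mathcal{N}$. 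The key point is that $P$ depends only on $g$, so Proposition \ref{propcriticalpoints} applies to every solution and shows $\nabla P=0$ along its critical set; translating $u$ by isometries yields $\cosh$-type solutions whose critical sets are the images $F(N)$, and since $\mathrm{Iso}(M)$ is transitive these sets cover $M$. Hence $\nabla P\equiv0$, the eigendistributions are parallel, and the simply connected $M$ splits as $M_{1}\times M_{2}$ with $\Ric=\lambda g$ on $M_{1}$ and $\Ric=\rho g$ on $M_{2}$. Since $\nabla w\in\mathcal{P}$ by Proposition \ref{propCSWconstantscal}, the non-constant potential lives on $M_{2}$, making $(M_{2},g_{2},w|_{M_{2}})$ a $\rho$-Einstein $(\lambda,\dim M_{2}+m)$-Einstein manifold with $\cosh$ potential; by Proposition \ref{proplambdaEinsteinEinstein} it must be a space form, so setting $S=M_{2}$ and $L=M_{1}$ gives alternative (1).

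For alternative (2), assume no solution has $\mu<0$, so $\bar{\mu}$ is positive semidefinite, and that the given $w$ has $\mu=0$, placing $w$ in the radical. First I would show the radical is the single line $\Real w$: if $u_{1},u_{2}$ are null then both lie in the radical, so $B(u_{1},u_{2})=0$; writing $u_{i}=\exp(\sqrt{-\bar{k}}\,r_{i})$ and using $\nabla u_{i}=\sqrt{-\bar{k}}\,u_{i}\nabla r_{i}$, the relation $B(u_{1},u_{2})=\bar{k}u_{1}u_{2}\bigl(1-g(\nabla r_{1},\nabla r_{2})\bigr)=0$ forces $g(\nabla r_{1},\nabla r_{2})=1$, whence $\nabla r_{1}=\nabla r_{2}$ and $u_{1},u_{2}$ are proportional. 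Since $\mathrm{Iso}(M)$ preserves $\bar{\mu}$ it preserves its one-dimensional radical, giving $w\circ F=C(F)\,w$ for every isometry $F$, with $C(F)>0$. The map $F\mapsto C(F)$ is a continuous homomorphism $\mathrm{Iso}(M)\to\Real_{>0}$, nontrivial because $w$ is non-constant and $M$ is homogeneous, so its kernel $H=\set{F:w\circ F=w}$ is a normal subgroup of codimension one. Finally, if $p,q$ lie on the same level set of $w$, transitivity gives $F$ with $F(p)=q$, and then $w(q)=(w\circ F)(p)=C(F)w(p)=C(F)w(q)$ forces $C(F)=1$, so $F\in H$; thus $H$ acts transitively on the connected components of each level set.

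The main obstacle is showing the dichotomy is exhaustive, i.e. that the given $w$ cannot sit in a positive-\emph{definite} form $\bar{\mu}$ (the ``$\mu>0$'' possibility with no accompanying negative or null solution). Here homogeneity must be used essentially: translating a $\sinh$-type solution $w=\sinh(\sqrt{-\bar{k}}\,r)$ by an isometry that moves its zero set should produce a combination of $\sinh(\sqrt{-\bar{k}}\,r)$ and $\cosh(\sqrt{-\bar{k}}\,r)$, hence a $\cosh$-type solution with $\mu<0$, landing us in alternative (1); making this step precise—and verifying that such isometries exist on a general homogeneous $M$—is the delicate part. A secondary technical point is the rigorous propagation of $\nabla P\equiv0$ from the swept critical sets in alternative (1), and the identification of $S$ via the classification in Table \ref{TablelambdaEinsteinEinstein}.
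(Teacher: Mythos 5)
Your overall strategy --- working in the finite-dimensional vector space $V$ of solutions of the linear (virtual) equation, observing that $\bar{\mu}$ polarizes to an $\mathrm{Iso}(M)$-invariant quadratic form on $V$, and organizing the cases by its signature --- is sound, and it is essentially the approach of the companion paper \cite{HPWVirtual} to which the present paper defers for the proof of Theorem \ref{thmhomogeneousmu0}. The paper itself only records, in the remark following the theorem, the argument for case (1), and it is the same as yours: $\mu<0$ forces critical points, Corollary \ref{corPidentitynablawzero} and Proposition \ref{propcriticalpoints} give the eigenvalue identity and $\nabla\mathrm{Ric}=0$ on the critical set, and homogeneity propagates both to all of $M$. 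For that propagation you do not need to sweep critical sets around by isometries: $\left\vert \nabla\mathrm{Ric}\right\vert$ and the eigenvalues of $\mathrm{Ric}$ are scalar isometry invariants, hence constant on a homogeneous space, so vanishing at one point gives vanishing everywhere. Your case (2) argument is also correct once you add the observation that a nonzero null solution cannot change sign ($\bar{\mu}(u)=0$ forces $\nabla u=0$ wherever $u=0$, and then the second-order linear ODE along geodesics forces $u\equiv0$), so up to sign every null solution is a positive multiple of $\exp\left(\sqrt{-\bar{k}}\,r\right)$ and your computation of $B(u_{1},u_{2})$ applies; the rest of the normal-subgroup argument goes through, using that the identity component of $\mathrm{Iso}(M)$ already acts transitively so that $C$ restricted to it is nontrivial.

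The gap you flag at the end --- ruling out $\mu>0$ for the given potential --- is genuine in your write-up but closes in one line, and not by the route you propose. By Corollary \ref{cordiffrholambda}, $\lambda<0$ gives $\bar{k}<0$, and by Proposition \ref{propwfunctions} a positive potential with $\bar{k}<0$ and $\bar{\mu}>0$ must be $w=\sqrt{-\bar{\mu}/\bar{k}}\sinh\left(\sqrt{-\bar{k}}\,r\right)$ for a globally defined distance function $r$. Following an integral curve of $-\nabla r$ (a unit-speed geodesic, defined for all time by completeness) drives $r$ down to $0$, where $w$ vanishes; hence $\partial M$ is nonempty, which is impossible on a homogeneous manifold. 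So the $\sinh$ case simply cannot occur, and there is no need to manufacture a $\cosh$-type companion by translating a $\sinh$-type solution --- an argument that would in any case be inconclusive, since the span of two $\bar{\mu}$-positive solutions need not contain a negative or null direction. With this observation the dichotomy $\mu<0$ or $\mu=0$ is exhaustive and your proof is complete.
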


\begin{remark}
In Case (1) above when $\mu<0$, one can show that
$M$ is rigid by Proposition \ref{propwfunctions}, Proposition \ref{propNequaldim}
and Proposition \ref{propcriticalpoints}. In fact, $\mu<0$ implies
that $w$ has critical points. At a critical point $\nabla\mathrm{Ric}=0$
and the eigenvalue of $\mathrm{Ric}$ is either $\lambda$ or $\rho$.
Since the metric is homogeneous, this holds everywhere on $M$. So
the metric splits along the eigen-distributions of $\mathrm{Ric}$,
i.e., it is rigid.
\end{remark}

Theorem \ref{thmhomogeneousmu0} suggests that we are most likely
to find non-rigid examples on solvable Lie groups. Using left invariant
vector fields, we can rewrite the $(\lambda,n+m)$-Einstein equation.

\begin{lemma} Let $(G,g)$ be a simply-connected Lie group with left
invariant metric. Suppose it admits a non-trivial $(\lambda,n+m)$-Einstein
structure with $m>1$ and $\mu=0$. Then there exists a codimension
one normal subgroup $H\subset G$ and left invariant vector field
$X_{0}=\nabla r$, where $r$ is the signed distance function from
$H$. Furthermore the $(\lambda,n+m)$-Einstein equation can be written
as
\begin{equation}
\mathrm{Ric}(X,Y)-\frac{1}{2}\sqrt{m(\rho-\lambda)}\left(g([X_{0},X],Y)+ g([X_{0},Y],X)\right)-(\rho-\lambda)g(X_{0},X)g(X_{0},Y)=\lambda
g(X,Y),\label{eqnlambdan+mleftinv}
\end{equation}
for any two left invariant vector fields $X$ and $Y$.
\end{lemma}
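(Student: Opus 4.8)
The plan is to pin down the shape of $w$ first, then use homogeneity to promote the radial gradient field to a left-invariant one and to extract the normal subgroup, and finally to convert the Einstein equation into bracket form via Koszul's formula.

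First I would record the consequences of $\mu=0$. This gives $\bar{\mu}=\mu/(m-1)=0$, so equation \eqref{eqnmubar} reads $\bar{k}w^{2}+|\nabla w|^{2}=0$. Since $w>0$ and the structure is non-trivial, this forces $\bar{k}<0$, and Proposition \ref{propwfunctions} then yields $w=\exp(\sqrt{-\bar{k}}\,r)$ for a distance function $r$. Because $\bar{k}$ and $\lambda$ share a sign (Corollary \ref{cordiffrholambda}), we get $\lambda<0$ and $\rho-\lambda=-m\bar{k}>0$, so $\sqrt{m(\rho-\lambda)}=m\sqrt{-\bar{k}}$ is a well-defined positive constant. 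Thus $X_{0}:=\nabla r=(\sqrt{-\bar{k}}\,w)^{-1}\nabla w$ is a globally defined unit vector field and $r=(\sqrt{-\bar{k}})^{-1}\log w$.

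Next I would exploit homogeneity, and this is the heart of the matter. Every left translation $L_{g}$ is an isometry of $(G,g)$, so the $\mu=0$ conclusion of Theorem \ref{thmhomogeneousmu0} applies and gives $w\circ L_{g}=C(g)\,w$ for a positive constant $C(g)$. Consequently $r\circ L_{g}=r+(\sqrt{-\bar{k}})^{-1}\log C(g)$ differs from $r$ by an additive constant, which forces $(L_{g})_{*}\nabla r=\nabla r$; that is, $X_{0}=\nabla r$ is left-invariant. The map $g\mapsto C(g)$ is a homomorphism $C\colon G\to\mathbb{R}_{>0}$ (from $w\circ L_{gh}=C(g)C(h)w$), and it is non-trivial, since otherwise $w$ would be left-invariant, hence constant. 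Therefore $H:=\ker C$ is a closed normal subgroup of codimension one; after normalizing $w(e)=1$ we have $H=\{w=1\}=\{r=0\}$, so $X_{0}$ is its unit normal and $r$ is the signed distance from $H$. I expect the genuine obstacle to be establishing the transformation rule $w\circ L_{g}=C(g)w$ (the input from Theorem \ref{thmhomogeneousmu0}); once it is in hand, left-invariance of $X_{0}$ together with the normality and codimension of $H$ follow formally.

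Finally, the rewriting is a direct computation. From $w=\exp(\sqrt{-\bar{k}}\,r)$ one gets $\nabla w=\sqrt{-\bar{k}}\,w\,X_{0}$ and
\[
\mathrm{Hess}\,w(X,Y)=\sqrt{-\bar{k}}\,w\Big(\sqrt{-\bar{k}}\,g(X_{0},X)g(X_{0},Y)+g(\nabla_{X}X_{0},Y)\Big).
\]
Since all inner products of left-invariant fields are constant, Koszul's formula collapses to
\[
2g(\nabla_{A}B,C)=g([A,B],C)-g([A,C],B)-g([B,C],A).
\]
Applying this to $\nabla_{X}X_{0}$ and to $\nabla_{Y}X_{0}$ and using that $X_{0}$ is a gradient field (so the Hessian of $r$ is symmetric) isolates $g(\nabla_{X}X_{0},Y)=\pm\tfrac{1}{2}\big(g([X_{0},X],Y)+g([X_{0},Y],X)\big)$, the sign being fixed once the orientation of $r$ is chosen. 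Substituting into $\mathrm{Ric}-\lambda g=(m/w)\,\mathrm{Hess}\,w$ and using $m(-\bar{k})=\rho-\lambda$ together with $m\sqrt{-\bar{k}}=\sqrt{m(\rho-\lambda)}$ produces exactly \eqref{eqnlambdan+mleftinv}. As a check on the constants and the sign, I would test the identity on the Einstein model $\mathrm{d}r^{2}+e^{2r}g_{F}$ from the ninth row of Table \ref{TablelambdaEinsteinEinstein}, where $\mathrm{Ric}=\rho g$, $\mathrm{ad}_{X_{0}}=-\mathrm{id}$ on $X_{0}^{\perp}$, and $\sqrt{m(\rho-\lambda)}=m$, and verify that both sides agree.
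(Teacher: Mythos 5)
Your proposal is correct and follows essentially the same route as the paper: both arguments rest on Proposition \ref{propwfunctions} together with Theorem \ref{thmhomogeneousmu0} to obtain $w=e^{\sqrt{-\bar{k}}r}$ and the codimension-one normal subgroup, then identify $X_{0}=\nabla r$ as a left-invariant field and convert $\mathrm{Hess}\,w=\frac{w}{m}(\mathrm{Ric}-\lambda g)$ into bracket form via Koszul's formula. The only cosmetic differences are that your derivation of the left-invariance of $X_{0}$ from $w\circ L_{g}=C(g)w$ and $H=\ker C$ makes explicit what the paper gets from the quotient homomorphism $G\rightarrow G/H\cong\mathbb{R}$, and the sign you leave open in $g(\nabla_{X}X_{0},Y)=\pm\frac{1}{2}\left(g([X_{0},X],Y)+g([X_{0},Y],X)\right)$ is indeed pinned down exactly by the consistency check you propose against the model $\mathrm{d}r^{2}+e^{2r}g_{F}$ in Table \ref{TablelambdaEinsteinEinstein}.
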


\begin{proof}
From Proposition \ref{propwfunctions} and Theorem \ref{thmhomogeneousmu0}
we may assume that $w=e^{\sqrt{-\bar{k}}r}$ and the level hypersurface
of $w=1$ is a codimension one normal subgroup $H$ in $G$. Thus
we obtain a Riemannian submersion $G\rightarrow G/H$ which is also
a Lie algebra homomorphism. This shows that left invariant vector
fields on $G/H$ lift to left invariant vector fields on $G$ that
are perpendicular to $H.$ As $G/H=\mathbb{R}$ it follows that $\nabla r$
is a left invariant vector field on $G.$

Using Koszul's formula we note that
\begin{equation*}
\mathrm{Hess}r\left(X,Y\right)=g\left(\nabla_{X}X_{0},Y\right)= \frac{1}{2}\left(g([X_{0},X],Y)+g([X_{0},Y],X)\right)
\end{equation*}
when $X,Y$ are left invariant. As
\begin{equation*}
\mathrm{Hess}w=\sqrt{-\bar{k}}w\mathrm{Hess}r-\bar{k}w\mathrm{d}r^{2}
\end{equation*}
we obtain the desired form for the $(\lambda,n+m)$-Einstein equation
(\ref{eqnlambdan+m}).
\end{proof}

\begin{proposition}
Let $G$ be a unimodular Lie group with left
invariant metric $g$. If $(G,g)$ is a nontrivial $(\lambda,n+m)$-Einstein
manifold with $m>1$, $\lambda<0$ and $\mu=0$, then $G=H\times\mathbb{R}$
where $H$ is $\lambda$-Einstein.
\end{proposition}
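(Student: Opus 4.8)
The plan is to read off the consequences of unimodularity through the mean curvature of the level sets of $w$, reduce everything to the case $\rho=0$, and then invoke the splitting already established in Corollary \ref{corrhozero}. Since the metric is left invariant it has constant scalar curvature, and the hypotheses $\lambda<0$, $\mu=0$, $m>1$ place us exactly in the situation of the preceding lemma: Corollary \ref{cordiffrholambda} gives $\bar{k}=\frac{\lambda-\rho}{m}<0$, and by Proposition \ref{propwfunctions} (with $\bar{\mu}=0$) together with Theorem \ref{thmhomogeneousmu0} we may take $w=\exp\left(\sqrt{-\bar{k}}\,r\right)$, where $X_{0}=\nabla r$ is left invariant and $\lieh=X_{0}^{\perp}$ is the (ideal) Lie algebra of the codimension one normal subgroup $H\subset G$.

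First I would compute the mean curvature of the level hypersurfaces, namely $\Delta r=\mathrm{tr}\,\mathrm{Hess}\,r$. Using the Hessian formula from the preceding lemma,
\[
\mathrm{Hess}\,r(X,Y)=\tfrac{1}{2}\left(g([X_{0},X],Y)+g([X_{0},Y],X)\right),
\]
and tracing over a left invariant orthonormal frame $\{E_{i}\}$ yields
\[
\Delta r=\sum_{i}\mathrm{Hess}\,r(E_{i},E_{i})=\sum_{i}g([X_{0},E_{i}],E_{i})=\mathrm{tr}\left(\mathrm{ad}_{X_{0}}\right),
\]
the last expression being precisely the trace of the linear map $Y\mapsto[X_{0},Y]$. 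Here the antisymmetric part of $\mathrm{ad}_{X_{0}}$ (with respect to $g$) is trace-free, so only its symmetric part, recorded by $\mathrm{Hess}\,r$, contributes; since $\lieh$ is an ideal, $\mathrm{ad}_{X_{0}}$ preserves the splitting $\lieg=\lieh\oplus\Real X_{0}$ and the computation is consistent.

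Now unimodularity means exactly that $\mathrm{tr}(\mathrm{ad}_{X})=0$ for every $X\in\lieg$, in particular $\mathrm{tr}(\mathrm{ad}_{X_{0}})=0$, so $\Delta r=0$. On the other hand, the constant scalar curvature identity derived in the proof of Theorem \ref{thmscaln-1} reads $w^{\prime}\Delta r=-\rho w$, and since $w^{\prime}=\sqrt{-\bar{k}}\,w>0$ this forces $\rho=0$. Finally, $\rho=0$ with constant scalar curvature and $m>1$ is exactly the hypothesis of the $\rho=0$ clause of Corollary \ref{corrhozero}, which gives a Riemannian product splitting $M=N\times\Real$ with $N$ a $\lambda$-Einstein manifold. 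As the $\Real$ factor is the integral curve of $X_{0}=\nabla r$ and the slices are the level sets $\{r=\mathrm{const}\}$, left invariance identifies $N$ with $H$, so $G=H\times\Real$ with $H$ being $\lambda$-Einstein.

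The main obstacle is the middle step: recognizing that the purely algebraic unimodularity hypothesis is equivalent to the geometric statement that the level hypersurfaces of $w$ are minimal (that is, $\Delta r=0$), and that this in turn pins down $\rho=0$ via the scalar curvature identity. Everything after that is assembly of results already proved, and the only point requiring mild care is the correct evaluation of $\mathrm{tr}(\mathrm{ad}_{X_{0}})$ via $\mathrm{Hess}\,r$, as noted above.
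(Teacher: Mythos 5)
Your proof is correct and follows essentially the same route as the paper: unimodularity forces $\mathrm{tr}(\mathrm{ad}_{X_{0}})=0$, which pins down $\rho=0$, and the splitting then comes from Theorem \ref{thmscaln-1} (via the $\rho=0$ clause of Corollary \ref{corrhozero}). The paper packages the trace computation as the identity $\mathrm{scal}=\sqrt{m(\rho-\lambda)}\,\mathrm{tr}(\mathrm{ad}_{X_{0}})+(n-1)\lambda+\rho$ compared against $\mathrm{scal}=(n-1)\lambda-(m-1)\rho$, whereas you phrase it as $\Delta r=\mathrm{tr}(\mathrm{ad}_{X_{0}})=0$ combined with $w^{\prime}\Delta r=-\rho w$; these are the same trace of the Einstein equation in two guises.
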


\begin{proof}
Choose an orthonormal basis $\left\{ X_{i}\right\} _{i=0}^{n-1}$
of left invariant vector fields. Then from the equation (\ref{eqnlambdan+mleftinv})
we have
\begin{equation*}
\mathrm{Ric}(X_{i},X_{i})=\sqrt{m(\rho-\lambda)}g([X_{0},X_{i}],X_{i})+(\rho-\lambda)\delta_{0i}+\lambda
\end{equation*}
which implies that
\begin{equation*}
\mathrm{scal}=\sqrt{m(\rho-\lambda)}\mathrm{tr}(\mathrm{ad}_{X_{0}})+(n-1)\lambda+\rho
\end{equation*}
The definition of being unimodular is that $\mathrm{tr}(\mathrm{ad}_{Z})=0$
for all $Z,$ so it follows that
\begin{equation*}
(n-1)\lambda-(m-1)\rho=(n-1)\lambda+\rho.
\end{equation*}
In particular $\rho=0$ and $\mathrm{scal}=(n-1)\lambda$. The manifold
splits by Theorem \ref{thmscaln-1}.
\end{proof}

In the following we give an explicit construction of non-rigid $(\lambda,4+m)$-Einstein
metrics on solvable Lie groups with $\lambda<0$ for all $m>0$.

Let $\mathfrak{g}$ be a 4-dimensional non-unimodular Lie algebra
of the type $\mathfrak{g}=\mathfrak{a}+\left[\mathfrak{g},\mathfrak{g}\right],$
where $\mathfrak{a}$ is a 2-dimensional abelian subalgebra and $\left[\mathfrak{g},\mathfrak{g}\right]$
the derived subalgebra, which we also assume to be abelian. Let $g$
denote a left invariant metric on the corresponding simply connected
Lie group $G$ with $X_{0},X_{1},X_{2},X_{3}$ an orthonormal frame
of left invariant vector fields such that $\mathfrak{a}=\mathrm{span}\left\{ X_{0},X_{1}\right\} $
and $\left[\mathfrak{g},\mathfrak{g}\right]=\mathrm{span}\left\{ X_{2},X_{3}\right\} .$
The distribution $\mathfrak{a}$ defines a totally geodesic foliation
whose leaves are $\mathbb{R}^{2}.$ The distribution $\left[\mathfrak{g},\mathfrak{g}\right]$
is Riemannian with vanishing $A$ tensor and the intrinsic geometry
of the leaves are also $\mathbb{R}^{2}.$ In other words we have a
Riemannian submersion
\begin{equation*}
\begin{array}{ccc}
\mathbb{R}^{2} & \longrightarrow & G\\
&  & \downarrow\\
&  & \mathbb{R}^{2}
\end{array}
\end{equation*}
where the base and intrinsic geometry of the fibers are flat and
the $A$ tensor for the submersion vanishes. We shall make further
simplifying assumptions about the Lie algebra, but in general we see
that the $T$ tensor controls everything. The Ricci curvatures are
computed using the follows properties.

\begin{proposition}
The Ricci tensor preserves the distributions
$\mathfrak{g}=\mathfrak{a}+\left[\mathfrak{g},\mathfrak{g}\right]$.
And for $i,j=0,1$
\begin{equation*}
\mathrm{Ric}\left(X_{i},X_{j}\right)= \sum_{k=2}^{3}2g\left(\left[X_{i},X_{k}\right],\nabla_{X_{k}}X_{j}\right)+ g\left(\nabla_{X_{k}}X_{i},\nabla_{X_{k}}X_{j}\right)
\end{equation*}
and $k,l=2,3$
\begin{equation*}
\mathrm{Ric}\left(X_{k},X_{l}\right)=\sum_{i\neq
k,l}-g\left(\nabla_{X_{i}}X_{i},\nabla_{X_{k}}X_{l}\right)+ g\left(\nabla_{X_{k}}X_{i},\left[X_{i},X_{l}\right]\right)+ g\left(\nabla_{X_{l}}X_{i},\left[X_{i},X_{k}\right]\right)+g\left(\nabla_{X_{k}}X_{i},\nabla_{X_{l}}X_{i}\right)
\end{equation*}
\end{proposition}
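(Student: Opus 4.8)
The plan is to make the whole computation algebraic by exploiting left invariance: both the structure constants defined by $[X_a,X_b]=\sum_c C^c_{ab}X_c$ and the connection coefficients $\Gamma^c_{ab}=g(\nabla_{X_a}X_b,X_c)$ are then constant on $G$. First I would record the brackets. Since $\mathfrak{a}=\mathrm{span}\{X_0,X_1\}$ is an abelian subalgebra, $[X_0,X_1]=0$; since $[\lieg,\lieg]=\mathrm{span}\{X_2,X_3\}$ is abelian, $[X_2,X_3]=0$; and since $[\lieg,\lieg]$ is an ideal, the only possibly nonzero brackets are $[X_i,X_k]$ with $i\in\{0,1\}$ and $k\in\{2,3\}$, all landing in $\mathrm{span}\{X_2,X_3\}$. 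Feeding this into Koszul's formula $2g(\nabla_XY,Z)=g([X,Y],Z)-g([Y,Z],X)+g([Z,X],Y)$ pins down the placement of the connection: $\nabla_{X_i}X_j=0$ for $i,j\in\{0,1\}$ (so the horizontal leaves are totally geodesic and $A=0$); $\nabla_{X_i}X_k$ and $\nabla_{X_k}X_i$ are vertical for $i\in\{0,1\}$, $k\in\{2,3\}$; and $\nabla_{X_k}X_l$ is horizontal for $k,l\in\{2,3\}$, which is exactly the fiber second fundamental form carrying all the geometry. These vanishing patterns, together with the torsion identity $\nabla_{X_i}X_k=\nabla_{X_k}X_i+[X_i,X_k]$, drive everything.

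For the claim that $\mathrm{Ric}$ preserves the splitting, the cleanest argument is a symmetry: the linear map $\sigma$ fixing $X_0,X_1$ and negating $X_2,X_3$ is an orthogonal Lie algebra automorphism, since every listed bracket is preserved (the mixed brackets change sign on both sides). As $G$ is simply connected, $\sigma$ integrates to a group automorphism that is orthogonal on $\lieg$, hence an isometry of $G$; therefore $\mathrm{Ric}(\sigma\,\cdot\,,\sigma\,\cdot\,)=\mathrm{Ric}$, and for $i\in\{0,1\}$, $k\in\{2,3\}$ this forces $\mathrm{Ric}(X_i,X_k)=-\mathrm{Ric}(X_i,X_k)=0$. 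The same conclusion also falls out of the direct trace below.

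For the two explicit formulas I would trace the curvature as $\mathrm{Ric}(X,Y)=\sum_{a=0}^{3}g\big(R(X,X_a)X_a,Y\big)$ with $R(X,X_a)X_a=\nabla_X\nabla_{X_a}X_a-\nabla_{X_a}\nabla_X X_a-\nabla_{[X,X_a]}X_a$. The crucial simplification is that, because the $\Gamma^c_{ab}$ are constant, every second covariant derivative collapses into first-order data; for example $g(\nabla_{X_k}\nabla_{X_i}X_k,X_j)=-g(\nabla_{X_i}X_k,\nabla_{X_k}X_j)$, since $g(\nabla_{X_i}X_k,X_j)$ is a constant that $X_k$ annihilates. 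Taking $X=X_i$, $Y=X_j$ horizontal, the $a\in\{0,1\}$ terms vanish identically and only $a=k\in\{2,3\}$ survive; applying the torsion identity to split $\nabla_{X_i}X_k$ then recovers the stated $\sum_{k=2}^{3}\big(2g([X_i,X_k],\nabla_{X_k}X_j)+g(\nabla_{X_k}X_i,\nabla_{X_k}X_j)\big)$. Taking $X=X_k$, $Y=X_l$ vertical, the same expansion produces the four-term expression summed over $i\neq k,l$.

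The main obstacle is purely organizational rather than conceptual: one has to symmetrize the curvature terms correctly, keep straight the signs coming from the $\nabla_{[X,X_a]}$ pieces and from the torsion identity, and---most easily overlooked---get the index range in the vertical formula right, remembering that in the diagonal case $k=l$ the remaining vertical index also contributes, which is precisely the source of the intrinsic fiber terms $-g(\nabla_{X_i}X_i,\nabla_{X_k}X_l)$ and $g(\nabla_{X_k}X_i,\nabla_{X_l}X_i)$. Keeping the orthonormality of the frame and the horizontal/vertical placement of each $\nabla_{X_a}X_b$ in hand makes all the cancellations transparent.
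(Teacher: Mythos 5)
Your proposal is correct, and for the two explicit Ricci formulas it follows essentially the same route as the paper: Koszul's formula on left invariant fields pins down which connection components can be nonzero (only $g(\nabla_{X_k}X_l,X_i)$ with $k,l$ vertical and $i$ horizontal, up to permutation), and then the curvature trace collapses because all $\Gamma$'s are constant, with the torsion identity $\nabla_{X_i}X_k=\nabla_{X_k}X_i+[X_i,X_k]$ producing the factor of $2$ and the correct index range $i\neq k,l$ handling the diagonal case. Where you genuinely depart from the paper is the first assertion: the paper shows directly that each operator $R(X_i,X_k)X_k$ lands in $\mathrm{span}\{X_0,X_1\}$ (writing it as $-2\nabla_{X_k}[X_i,X_k]-\nabla_{X_k}\nabla_{X_k}X_i$), which both proves that $\mathrm{Ric}$ preserves the splitting and simultaneously sets up the horizontal Ricci formula; you instead invoke the orthogonal involution $\sigma$ fixing $\mathfrak{a}$ and negating $[\mathfrak{g},\mathfrak{g}]$, which is indeed a Lie algebra automorphism (it needs only that $\mathfrak{a}$ and $[\mathfrak{g},\mathfrak{g}]$ are abelian and that $[\mathfrak{g},\mathfrak{g}]$ is an ideal), integrates to an isometry of the simply connected $G$, and forces $\mathrm{Ric}(X_i,X_k)=-\mathrm{Ric}(X_i,X_k)=0$. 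Your symmetry argument is cleaner and more structural for the vanishing of the mixed components, but it yields only that vanishing, so you still need the paper's explicit reduction of $R(X_i,X_k)X_k$ (or the equivalent term-by-term trace you sketch) to obtain the displayed formulas; the paper's single computation does double duty.
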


\begin{proof}
This relies on our knowledge of the covariant derivatives, specifically
that the foliation defined by $\mathfrak{a}$ is totally geodesic.
The only possible nonvanishing terms up to permutations of indices
are terms of the form
\begin{equation*}
g\left(\nabla_{X_{i}}X_{j},X_{k}\right),\text{ where }k=0,1\text{ and }i,j=2,3.
\end{equation*}
These can be computed by observing
\begin{eqnarray*}
g\left(\nabla_{X_{i}}X_{j},X_{k}\right) & = &
\frac{1}{2}\left(g\left(\left[X_{k},X_{i}\right],X_{j}\right)+ g\left(\left[X_{k},X_{j}\right],X_{i}\right)\right)\\
& = & -g\left(X_{j},\nabla_{X_{i}}X_{k}\right).
\end{eqnarray*}
Note also that these quantities are all constant since the metric
is left invariant.

With this in mind we can do the calculations. For $i=0,1$
\begin{eqnarray*}
\mathrm{Ric}\left(X_{i}\right) & = & \sum_{k=1}^{4}R\left(X_{i},X_{k}\right)X_{k}\\
& = & \sum_{k=3}^{4}R\left(X_{i},X_{k}\right)X_{k}
\end{eqnarray*}
and
\begin{eqnarray*}
R\left(X_{i},X_{k}\right)X_{k} & = &
\nabla_{X_{i}}\nabla_{X_{k}}X_{k}-\nabla_{X_{k}}\nabla_{X_{i}}X_{k}-\nabla_{\left[X_{i},X_{k}\right]}X_{k}\\
& = &
-\nabla_{X_{k}}\left[X_{i},X_{k}\right]-\nabla_{X_{k}}\nabla_{X_{k}}X_{i}- \nabla_{\left[X_{i},X_{k}\right]}X_{k}\\
& = & -2\nabla_{X_{k}}\left[X_{i},X_{k}\right]-\nabla_{X_{k}}\nabla_{X_{k}}X_{i}\in\mathrm{span}\left\{
X_{0},X_{1}\right\} .
\end{eqnarray*}
It shows that the Ricci tensor preserves the distributions.

This also helps us calculate the Ricci curvatures when $i,j=0,1$
\begin{eqnarray*}
\mathrm{Ric}\left(X_{i},X_{j}\right) & = & \sum_{k=2}^{3}R\left(X_{i},X_{k},X_{k},X_{j}\right) \\
& = &
\sum_{k=2}^{3}g\left(-2\nabla_{X_{k}}\left[X_{i},X_{k}\right]-\nabla_{X_{k}}\nabla_{X_{k}}X_{i},X_{j}\right)\\
& = &
\sum_{k=2}^{3}2g\left(\left[X_{i},X_{k}\right],\nabla_{X_{k}}X_{j}\right)+ g\left(\nabla_{X_{k}}X_{i},\nabla_{X_{k}}X_{j}\right).
\end{eqnarray*}

When $k,l=2,3$ we obtain similar formulas
\begin{eqnarray*}
\mathrm{Ric}\left(X_{k},X_{l}\right) & = & \sum_{i\neq k,l}R\left(X_{k},X_{i},X_{i},X_{l}\right)\\
& = & \sum_{i\neq k,l}g\left(\nabla_{X_{k}}\nabla_{X_{i}}X_{i},X_{l}\right)- g\left(\nabla_{X_{i}}\nabla_{X_{k}}X_{i},X_{l}\right)- g\left(\nabla_{\left[X_{k},X_{i}\right]}X_{i},X_{l}\right)\\
& = & \sum_{i\neq k,l}-g\left(\nabla_{X_{i}}X_{i},\nabla_{X_{k}}X_{l}\right)- g\left(\nabla_{X_{i}}\nabla_{X_{k}}X_{i},X_{l}\right)- g\left(\nabla_{\left[X_{k},X_{i}\right]}X_{i},X_{l}\right)\\
& = & \sum_{i\neq k,l}-g\left(\nabla_{X_{i}}X_{i},\nabla_{X_{k}}X_{l}\right)+ g\left(\nabla_{X_{k}}X_{i},\nabla_{X_{i}}X_{l}\right)- g\left(\nabla_{X_{l}}X_{i},\left[X_{k},X_{i}\right]\right)\\
& = & \sum_{i\neq k,l}-g\left(\nabla_{X_{i}}X_{i},\nabla_{X_{k}}X_{l}\right)+ g\left(\nabla_{X_{k}}X_{i},\left[X_{i},X_{l}\right]\right)+ g\left(\nabla_{X_{l}}X_{i},\left[X_{i},X_{k}\right]\right)\\
&  & +g\left(\nabla_{X_{k}}X_{i},\nabla_{X_{l}}X_{i}\right)
\end{eqnarray*}
that finishes the proof.
\end{proof}

Next we need to determine the structure constants. We proceed as in
the excellent reference \cite{Jensen} also using \cite{Milnor} to
find an appropriate 3 dimensional non-unimodular solvable Lie algebra
satisfying
\begin{eqnarray*}
[X_{1}, X_{2}] & = & \alpha X_{2}+\beta X_{3}\\
{}[X_{1},X_{3}] & = & \gamma X_{2}+(2-\alpha)X_{3},
\end{eqnarray*}
where $\alpha$, $\beta$ and $\gamma$ are constants. We will make
the further simplifying assumption that $\gamma=\beta\neq0.$ Thus
\begin{eqnarray*}
\nabla_{X_{2}}X_{1} & = & -\alpha X_{2}-\frac{1}{2}\beta X_{3}\\
\nabla_{X_{3}}X_{1} & = & -\beta X_{2}-\left(2-\alpha\right)X_{3}.
\end{eqnarray*}

There are similar constraints for the structure constants
\begin{equation*}
[X_{0},X_{i}]=F_{ij}X_{j},\quad\mbox{ for }i,j=2,3,
\end{equation*}
that yield the other relevant covariant derivatives:
\begin{eqnarray*}
\nabla_{X_{2}}X_{0} & = & -F_{22}X_{2}-\frac{1}{2}\left(F_{23}+F_{32}\right)X_{3}\\
\nabla_{X_{3}}X_{0} & = & -\frac{1}{2}\left(F_{23}+F_{32}\right)X_{2}-F_{33}X_{3}.
\end{eqnarray*}
However, we also need to see how they interact with the other structure
constants. Checking the Jacobi identity shows

\begin{proposition}
The constants $\alpha$, $\beta$ and $F_{ij}$ satisfy the following identities:
\begin{eqnarray*}
\beta F_{22}+2(1-\alpha)F_{23}-\beta F_{33} & = & 0\\
F_{23} & = & F_{32}.
\end{eqnarray*}
\end{proposition}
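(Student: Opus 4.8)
The plan is to read these two relations directly off the Jacobi identity, since all the metric data has already been packaged into the bracket relations and the only undetermined structure is the interaction of $\mathrm{ad}_{X_0}$ with the derived algebra. First I would record every bracket explicitly. Because $\mathfrak{a}=\mathrm{span}\{X_0,X_1\}$ is abelian we have $[X_0,X_1]=0$, and because $[\mathfrak{g},\mathfrak{g}]=\mathrm{span}\{X_2,X_3\}$ is abelian we have $[X_2,X_3]=0$; the nonzero brackets are $[X_1,X_2]=\alpha X_2+\beta X_3$, $[X_1,X_3]=\gamma X_2+(2-\alpha)X_3$, $[X_0,X_2]=F_{22}X_2+F_{23}X_3$ and $[X_0,X_3]=F_{32}X_2+F_{33}X_3$. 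Note that $\mathrm{ad}_{X_0}$ and $\mathrm{ad}_{X_1}$ both map $[\mathfrak{g},\mathfrak{g}]$ into itself, so every nested bracket below closes up inside $\mathrm{span}\{X_2,X_3\}$.

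The key step is to impose the Jacobi identity on the triple $(X_0,X_1,X_2)$:
\[
[[X_0,X_1],X_2]+[[X_1,X_2],X_0]+[[X_2,X_0],X_1]=0.
\]
The first summand vanishes since $[X_0,X_1]=0$. Expanding the second gives $-(\alpha F_{22}+\beta F_{32})X_2-(\alpha F_{23}+\beta F_{33})X_3$, and expanding the third gives $(\alpha F_{22}+\gamma F_{23})X_2+(\beta F_{22}+(2-\alpha)F_{23})X_3$. Comparing the coefficient of $X_2$ and using the simplifying hypothesis $\gamma=\beta$ leaves $\beta(F_{23}-F_{32})=0$, so $F_{23}=F_{32}$ because $\beta\neq0$. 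Comparing the coefficient of $X_3$ leaves $\beta F_{22}+2(1-\alpha)F_{23}-\beta F_{33}=0$, which is exactly the first asserted identity.

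This is essentially mechanical, so the only real points to watch are the sign conventions coming from antisymmetry of the bracket and confirming that these are the complete set of constraints. To check completeness I would run the Jacobi identity on the three remaining basis triples. The triple $(X_0,X_1,X_3)$ reproduces the same two relations once $\gamma=\beta$ and $F_{23}=F_{32}$ are used, while the triples $(X_0,X_2,X_3)$ and $(X_1,X_2,X_3)$ collapse identically to $0=0$ because they each contain the commuting pair $X_2,X_3$ and $\mathrm{ad}$ preserves the abelian derived algebra. Hence no further relations arise, and the main (mild) obstacle is simply keeping the index and sign bookkeeping straight through the nested bracket expansions.
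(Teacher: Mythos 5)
Your proposal is correct and follows essentially the same route as the paper: both impose the Jacobi identity on the basis triples (the paper writes out the triples containing $X_0$, reduces them using $[X_0,X_1]=0$ and antisymmetry, and reads off the same four scalar relations, two of which are redundant). Your coefficient computations for the triple $(X_0,X_1,X_2)$ and your completeness check on the remaining triples all agree with the paper's calculation.
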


\begin{proof}
To verify the Jacobi identity we only have to consider three vectors
$X_{0}$, $X_{i}$ and $X_{j}$ for $1\leq i<j\leq3,$ i.e., the equations
\begin{eqnarray*}
\left[\left[X_{0},X_{1}\right],X_{2}\right]+\left[\left[X_{2},X_{0}\right],X_{1}\right]+ \left[\left[X_{1},X_{2}\right],X_{0}\right] & = & 0\\
\left[\left[X_{0},X_{1}\right],X_{3}\right]+\left[\left[X_{3},X_{0}\right],X_{1}\right]+ \left[\left[X_{1},X_{3}\right],X_{0}\right] & = & 0\\
\left[\left[X_{0},X_{2}\right],X_{3}\right]+\left[\left[X_{3},X_{0}\right],X_{2}\right]+ \left[\left[X_{2},X_{3}\right],X_{0}\right] & = & 0
\end{eqnarray*}
which can be reduced to
\begin{eqnarray*}
-\left[F_{2j}X_{j},X_{1}\right]+\left[\alpha X_{2}+\beta X_{3},X_{0}\right] & = & 0\\
-\left[F_{3j}X_{j},X_{1}\right]+\left[\beta X_{2}+(2-\alpha)X_{3},X_{0}\right] & = & 0
\end{eqnarray*}
or more explicitly
\begin{equation*}
F_{22}\left(\alpha X_{2}+\beta X_{3}\right)+F_{23}\left(\beta
X_{2}+(2-\alpha)X_{3}\right)-\alpha\left(F_{22}X_{2}+F_{23}X_{3}\right)- \beta\left(F_{32}X_{2}+F_{33}X_{3}\right)=0
\end{equation*}
\begin{equation*}
F_{32}\left(\alpha X_{2}+\beta X_{3}\right)+F_{33}\left(\beta
X_{2}+(2-\alpha)X_{3}\right)-\beta\left(F_{22}X_{2}+F_{23}X_{3}\right)- \left(2-\alpha\right)\left(F_{32}X_{2}+F_{33}X_{3}\right)=0.
\end{equation*}
This finally yields
\begin{eqnarray*}
\beta F_{23}-\beta F_{32} & = & 0\\
\beta F_{22}+(2-2\alpha)F_{23}-\beta F_{33} & = & 0\\
-\beta F_{22}-(2-2\alpha)F_{32}+\beta F_{33} & = & 0\\
-\beta F_{23}+\beta F_{32} & = & 0.
\end{eqnarray*}
This immediately gives the relationships.
\end{proof}

We now wish to solve the $\left(\lambda,4+m\right)$-Einstein equations
on the Lie group $G$. Let $H$ be the Lie subgroup corresponding
to the Lie subalgebra spanned by $\left\{ X_{1},X_{2},X_{3}\right\} $
and $r$ is the distance function from $H$. Since $\nabla_{X_{0}}X_{0}=0$
and $X_{0}$ is the unit normal vector field of $H$, we have $X_{0}=\nabla r$.
From the $(\lambda,n+m)$-Einstein equation (\ref{eqnlambdan+mleftinv})
in terms of left invariant vector fields, we have

\begin{lemma}
The $\left(\lambda,4+m\right)$-Einstein equations
on $G$ can be written as
\begin{eqnarray*}
\mathrm{Ric}(X_{0},X_{0}) & = & \rho\\
\mathrm{Ric}(X_{1},X_{1}) & = & \lambda\\
\mathrm{Ric}(X_{1},X_{0}) & = & 0\\
\mathrm{Ric}(X_{2},X_{2})-\sqrt{m\left(\rho-\lambda\right)}F_{22} & = & \lambda\\
\mathrm{Ric}(X_{3},X_{3})-\sqrt{m\left(\rho-\lambda\right)}F_{33} & = & \lambda\\
\mathrm{Ric}(X_{2},X_{3})-\sqrt{m\left(\rho-\lambda\right)}F_{32} & = & 0.
\end{eqnarray*}
\end{lemma}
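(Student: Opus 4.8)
The plan is to substitute the orthonormal left-invariant frame $X_0,X_1,X_2,X_3$ directly into the left-invariant form (\ref{eqnlambdan+mleftinv}) of the $(\lambda,n+m)$-Einstein equation and read off the six component equations, since that identity holds for every pair of left invariant vector fields. Before doing the substitutions I would collect the brackets forced by the structure of $\mathfrak{g}$: because $\mathfrak{a}=\mathrm{span}\{X_0,X_1\}$ is abelian we have $[X_0,X_1]=0$; by definition $[X_0,X_2]=F_{22}X_2+F_{23}X_3$ and $[X_0,X_3]=F_{32}X_2+F_{33}X_3$; and $F_{23}=F_{32}$ by the Jacobi identity verified above. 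Combined with orthonormality $g(X_i,X_j)=\delta_{ij}$ and the fact that $X_0=\nabla r$, this makes every term in (\ref{eqnlambdan+mleftinv}) computable by inspection.

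I would then evaluate (\ref{eqnlambdan+mleftinv}) pair by pair. For $X=Y=X_0$ the two bracket terms vanish since $[X_0,X_0]=0$, while $g(X_0,X_0)=1$ makes the quadratic term contribute $-(\rho-\lambda)$, so the equation collapses to $\mathrm{Ric}(X_0,X_0)-(\rho-\lambda)=\lambda$, i.e. $\mathrm{Ric}(X_0,X_0)=\rho$. For $X=Y=X_1$, and likewise for the pair $X=X_1,Y=X_0$, all correction terms vanish because $[X_0,X_1]=0$ and $g(X_0,X_1)=0$, yielding $\mathrm{Ric}(X_1,X_1)=\lambda$ and $\mathrm{Ric}(X_1,X_0)=0$. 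For $X=Y=X_2$ the surviving bracket term is $g([X_0,X_2],X_2)=F_{22}$ and $g(X_0,X_2)=0$, giving $\mathrm{Ric}(X_2,X_2)-\sqrt{m(\rho-\lambda)}F_{22}=\lambda$, with the $X_3$ case identical. Finally for $X=X_2,Y=X_3$ the two bracket contributions $g([X_0,X_2],X_3)=F_{23}$ and $g([X_0,X_3],X_2)=F_{32}$ merge, via $F_{23}=F_{32}$, into the single coefficient $\sqrt{m(\rho-\lambda)}F_{32}$, producing the last equation.

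I do not expect a genuine obstacle: the whole content is careful bookkeeping of which bracket and metric terms survive. The two places deserving attention are the $(X_0,X_0)$ component, where the quadratic term $(\rho-\lambda)g(X_0,X_0)^2$ is exactly what turns the raw identity into the clean relation $\mathrm{Ric}(X_0,X_0)=\rho$, and the $(X_2,X_3)$ component, where the symmetry $F_{23}=F_{32}$ is precisely what is needed to fold the two bracket terms into one. I would also remark that the mixed components such as $\mathrm{Ric}(X_0,X_2)$ or $\mathrm{Ric}(X_1,X_2)$ need not be listed, since the Ricci tensor preserves the splitting $\mathfrak{g}=\mathfrak{a}+[\mathfrak{g},\mathfrak{g}]$ and the corresponding bracket terms also vanish, so those instances of (\ref{eqnlambdan+mleftinv}) reduce to $0=0$.
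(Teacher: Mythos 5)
Your proof is correct and is exactly the routine verification the paper leaves implicit (the lemma is stated there without proof): substituting the orthonormal frame into equation (\ref{eqnlambdan+mleftinv}), using $[X_0,X_1]=0$, $[X_0,X_i]=F_{ij}X_j$, and $F_{23}=F_{32}$ from the Jacobi identity. Your attention to the quadratic term in the $(X_0,X_0)$ component and to the symmetrization $\tfrac12(F_{23}+F_{32})=F_{32}$ in the $(X_2,X_3)$ component is precisely the right bookkeeping, and your remark that the remaining mixed components reduce to $0=0$ via the Ricci tensor preserving the splitting is a correct and worthwhile addition.
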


These equations impose further constraints on the structure constants
and as we shall see the last three of them are equivalent when we
know that the first three hold.

\begin{theorem} \label{thmnonrigidexample}
Given $m>0$ and $\alpha,\beta\in\mathbb{R}$
with $\beta\ne0$ and $\left(\alpha-1\right)^{2}+\beta^{2}\neq1$,
we obtain a left invariant $\left(\lambda,4+m\right)$-Einstein metric
on $G$ with $\mu=0.$ Moreover, this structure is not rigid.
\end{theorem}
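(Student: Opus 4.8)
The plan is to convert the six scalar equations of the preceding lemma into an algebraic system for the structure constants $F_{22},F_{23},F_{33}$ and the constants $\lambda,\rho$, solve it explicitly in terms of the free data $\alpha,\beta,m$, and then read off non-rigidity from the spectrum of $P=\Ric-\rho g$. First I would substitute the Ricci formulas established above into the six equations. The equation $\Ric(X_0,X_0)=\rho$ gives $\rho=-(F_{22}^2+2F_{23}^2+F_{33}^2)<0$, while $\Ric(X_1,X_1)=\lambda$ yields the clean expression
\[
\lambda=-2\big((\alpha-1)^2+\beta^2+1\big),
\]
depending only on $\alpha,\beta$; note $\lambda<-2$ precisely because $\beta\neq0$. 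The equation $\Ric(X_1,X_0)=0$ is a single linear relation in the $F_{ij}$, which together with the Jacobi relation $\beta F_{22}+2(1-\alpha)F_{23}-\beta F_{33}=0$ and $F_{23}=F_{32}$ determines $F_{22}$ and $F_{33}$ as fixed multiples of $F_{23}$, leaving only an overall scale free.

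Next I would handle the three remaining equations $\Ric(X_k,X_l)-\sqrt{m(\rho-\lambda)}\,F_{kl}=\lambda\delta_{kl}$. Writing $c=\sqrt{m(\rho-\lambda)}$ and $\kappa=-(F_{22}+F_{33})-c$, the computation of $\Ric$ on $\mathrm{span}\{X_2,X_3\}$ collapses each of them to the form $\kappa F_{22}=\lambda+2\alpha$, $\kappa F_{33}=\lambda+4-2\alpha$, $\kappa F_{23}=2\beta$, so all three prescribe exactly the ratios already forced by the linear constraints; this is the precise sense in which the last three equations are equivalent once the first three hold. What remains is to fix the scale, i.e. to make $\kappa$ consistent with its own definition. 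Feeding the displayed relations into $\rho=-(F_{22}^2+2F_{23}^2+F_{33}^2)$ and into $c^2=m(\rho-\lambda)$ reduces everything to one quadratic in $y=\kappa^2$; using the identity $(\lambda+2\alpha)^2+(\lambda+4-2\alpha)^2+8\beta^2=2\lambda(\lambda+2)$ this quadratic has discriminant $m^2\lambda^2>0$ and the two roots $y=-2(\lambda+2)$ and $y=-(m+2)\lambda-4$, both positive since $\lambda<-2$. I expect this scale-fixing step to be the main obstacle: the first root returns $\rho=\lambda$, the degenerate Einstein case with $\bar{k}=0$ that must be discarded by Corollary \ref{cordiffrholambda}, whereas the second gives
\[
\rho-\lambda=\frac{-m\lambda^2}{(m+2)\lambda+4}>0,
\]
so that $\bar{k}=(\lambda-\rho)/m<0$ and, by Proposition \ref{propwfunctions}, we are in the $\mu=0$ case with $w=\exp(\sqrt{-\bar{k}}\,r)$. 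Taking $\kappa=-\sqrt{y}$ makes $c>0$, and the resulting $F_{ij},\lambda,\rho$ genuinely solve (\ref{eqnlambdan+mleftinv}).

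Finally I would establish non-rigidity through $P=\Ric-\rho g$. On $\mathrm{span}\{X_0\}$ and $\mathrm{span}\{X_1\}$ the eigenvalues of $P$ are $0$ and $\lambda-\rho$, while the Einstein equations give $P|_{\mathrm{span}\{X_2,X_3\}}=(\lambda-\rho)I+cF$ with $F=(F_{ij})$ symmetric, so its eigenvalues are $(\lambda-\rho)+c f_i$ where $f_1,f_2$ are the eigenvalues of $F$. By Proposition \ref{proprigidnontrivial} together with the remark following Theorem \ref{thmradialQ}, a rigid example in the $\mu=0$ case splits as $\mathbb{R}\times H$ with $\Ric$-eigenvalues only $\lambda$ and $\rho$, so $P$ would have eigenvalues only $0$ and $\lambda-\rho$; this forces $f_i\in\{0,\sqrt{-\bar{k}}\}$. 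A direct computation gives $f_{1,2}=\big(-(\lambda+2)\pm2\sqrt{D}\big)/|\kappa|$ with $D=(\alpha-1)^2+\beta^2$, and since $\lambda+2=-2D$ these equal $2\sqrt{D}(\sqrt{D}\pm1)/|\kappa|$; hence an eigenvalue of $F$ vanishes if and only if $D=1$. The hypothesis $(\alpha-1)^2+\beta^2\neq1$ thus makes both $f_i$ nonzero and distinct, so $\{f_1,f_2\}\not\subseteq\{0,\sqrt{-\bar{k}}\}$, $P$ has an eigenvalue outside $\{0,\lambda-\rho\}$, and the metric is not rigid; it is not Einstein either, since $\rho\neq\lambda$. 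This simultaneously shows that $(\alpha-1)^2+\beta^2\neq1$ is exactly the condition separating these non-rigid metrics from the rigid product $\mathbb{R}\times H$.
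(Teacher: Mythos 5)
Your proposal is correct, and its first two thirds follow the same computational skeleton as the paper's proof: substitute the Ricci formulas into the six equations, use $\mathrm{Ric}(X_1,X_0)=0$ together with the Jacobi relations to pin down $F_{22},F_{33}$ as fixed multiples of $F_{23}$, observe that the three remaining equations are mutually proportional, and solve a quadratic whose roots are the degenerate branch $\rho=\lambda$ and the genuine solution. Your variable $y=\kappa^{2}$ is $4/z^{2}$ in the paper's notation, and your roots $y=-2(\lambda+2)$ and $y=-(m+2)\lambda-4$ match the paper's $z^{2}=-2/(\lambda+2)$ and $z^{2}=-4/(\lambda(m+2)+4)$ exactly; I also checked the identity $(\lambda+2\alpha)^{2}+(\lambda+4-2\alpha)^{2}+8\beta^{2}=2\lambda(\lambda+2)$ and the collapse of the three block equations to $\kappa F_{kl}$ form, which is a genuinely cleaner bookkeeping than the paper's explicit rational functions of $\alpha,\beta,z$. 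Where you truly diverge is the non-rigidity step: the paper only computes the trace, $\mathrm{scal}=3\lambda-(m-1)\rho$, notes that rigidity would force $\mathrm{scal}=2\lambda+2\rho$, hence $\lambda=(m+1)\rho$, hence $\lambda=-4$, hence $(\alpha-1)^{2}+\beta^{2}=1$; you instead diagonalize the $2\times2$ block $F$ and show its eigenvalues $2\sqrt{D}\left(\sqrt{D}\pm1\right)/|\kappa|$, $D=(\alpha-1)^{2}+\beta^{2}$, are distinct and both nonzero when $D\neq0,1$, so they cannot both lie in the two-element set $\left\{0,\sqrt{-\bar{k}}\right\}$ required by Proposition \ref{proprigidnontrivial}. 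Your route is slightly longer but exhibits the full Ricci spectrum and identifies which eigenvalue degenerates at $D=1$, whereas the paper's trace argument is shorter but needs the extra computation at $\lambda=-4$ to confirm that the boundary case really is rigid. One small imprecision: $\mu=0$ does not follow from $\bar{k}<0$ via Proposition \ref{propwfunctions} (that sign is compatible with $\bar\mu$ of any sign); rather $\mu=0$ is built into the ansatz, since equation (\ref{eqnlambdan+mleftinv}) was derived assuming $w=e^{\sqrt{-\bar{k}}r}$. This does not affect the validity of your argument.
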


\begin{proof}
We start by observing that if we have such a solution then
\begin{eqnarray*}
\mathrm{Ric}(X_{1},X_{0}) & = & -\frac{2(-\alpha+\alpha^{2}+ \beta^{2})F_{32}+2\beta F_{33}}{\beta}=0,\\
\mathrm{Ric}(X_{0},X_{0}) & = &
-\frac{2\left(F_{32}\right)^{2}(2-4\alpha^{3}+\alpha^{4}+3\beta^{2}+ \beta^{4}-2\alpha(3+2\beta^{2})+\alpha^{2}(7+2\beta^{2}))}{\beta^{2}}=\rho,\\
\mathrm{Ric}(X_{1},X_{1}) & = & -2(2-2\alpha+\alpha^{2}+\beta^{2})=\lambda,\\
\mathrm{Ric}(X_{2},X_{2}) & = &
-\frac{2(\alpha\beta^{2}+\left(F_{32}\right)^{2}(2-5\alpha^{3}+\alpha^{4}+ 3\beta^{2}+\beta^{4}+\alpha^{2}(9+2\beta^{2})-\alpha(7+5\beta^{2})))}{\beta^{2}}\\
& = & \lambda-\frac{\left(F_{32}\right)(2-3\alpha+\alpha^{2}+\beta^{2})}{\beta}\sqrt{m(\rho-\lambda)},\\
\mathrm{Ric}(X_{2},X_{3}) & = &
\frac{2(-\beta^{2}+\left(F_{32}\right)^{2}(1-2\alpha+\alpha^{2}+\beta^{2}))}{\beta} =\left(F_{32}\right)\sqrt{m(\rho-\lambda)},\\
\mathrm{Ric}(X_{3},X_{3}) & = &
-\frac{2((2-\alpha)\beta^{2}+\left(F_{32}\right)^{2}(-3\alpha^{3}+ \alpha^{4}+\beta^{2}+\beta^{4}+\alpha^{2}(3+2\beta^{2})-\alpha(1+3\beta^{2})))}{\beta^{2}}\\
& = & \lambda-\frac{\left(F_{32}\right)(-\alpha+\alpha^{2}+\beta^{2})}{\beta}\sqrt{m(\rho-\lambda)}.
\end{eqnarray*}
The first equation forces the relationship
\begin{equation*}
F_{33}=-\frac{-\alpha+\alpha^{2}+\beta^{2}}{\beta}F_{32}.
\end{equation*}
If we introduce the notation
\begin{equation*}
z=\frac{F_{32}}{\beta}
\end{equation*}
then we have to solve the equations
\begin{eqnarray*}
\mathrm{Ric}(X_{0},X_{0}) & = &
-2z^{2}(2-4\alpha^{3}+\alpha^{4}+3\beta^{2}+\beta^{4}-2\alpha(3+2\beta^{2})+\alpha^{2}(7+2\beta^{2}))=\rho,\\
\mathrm{Ric}(X_{1},X_{1}) & = & -2(2-2\alpha+\alpha^{2}+\beta^{2})=\lambda,\\
\mathrm{Ric}(X_{2},X_{2}) & = &
-2\alpha-2z^{2}(2-5\alpha^{3}+\alpha^{4}+3\beta^{2}+\beta^{4}+\alpha^{2}(9+2\beta^{2})-\alpha(7+5\beta^{2}))\\
& = & \lambda-z(2-3\alpha+\alpha^{2}+\beta^{2})\sqrt{m(\rho-\lambda)},\\
\frac{1}{\beta}\mathrm{Ric}(X_{2},X_{3}) & = &
-2+2z^{2}(1-2\alpha+\alpha^{2}+\beta^{2})=z\sqrt{m(\rho-\lambda)},\\
\mathrm{Ric}(X_{3},X_{3}) & = &
-2(2-\alpha)\beta^{2}-2z^{2}(-3\alpha^{3}+\alpha^{4}+\beta^{2}+\beta^{4}+\alpha^{2}(3+2\beta^{2})- \alpha(1+3\beta^{2}))\\
& = & \lambda-z(-\alpha+\alpha^{2}+\beta^{2})\sqrt{m(\rho-\lambda)}.
\end{eqnarray*}
Note that the first equation combined with the second yields
\begin{eqnarray*}
\rho & = &
-2z^{2}(2-4\alpha^{3}+\alpha^{4}+3\beta^{2}+\beta^{4}-2\alpha(3+2\beta^{2})+\alpha^{2}(7+2\beta^{2}))\\
& = & -2z^{2}\left(2-2\alpha+\alpha^{2}+\beta^{2}\right)\left(1-2\alpha+\alpha^{2}+\beta^{2}\right)\\
& = & -z^{2}\frac{\lambda\left(\lambda+2\right)}{2}
\end{eqnarray*}
showing
\begin{eqnarray*}
\rho-\lambda & = & -z^{2}\frac{\lambda\left(\lambda+2\right)}{2}-\lambda\\
& = & -\lambda\left(\frac{\lambda+2}{2}z^{2}+1\right).
\end{eqnarray*}
The last three equations turn out to be identical. Specifically,
the fourth equation is the same as the third if we factor by $(2-3\alpha+\alpha^{2}+\beta^{2})$
and the same as the fifth if we factor by $(-\alpha+\alpha^{2}+\beta^{2})$.
Thus these three equations reduce to
\begin{equation*}
-2+2z^{2}(1-2\alpha+\alpha^{2}+\beta^{2})=z\sqrt{m(\rho-\lambda)}.
\end{equation*}
or
\begin{equation*}
-2-\left(\lambda+2\right)z^{2}=z\sqrt{-\lambda m\left(\frac{\lambda+2}{2}z^{2}+1\right)}.
\end{equation*}
This calculates $z$ as a function of $\lambda.$ Either
\begin{equation*}
z^{2}=\frac{-2}{\lambda+2}
\end{equation*}
forcing $\rho=\lambda,$ or
\begin{equation}
z^{2}=\frac{2}{-\frac{\lambda}{2}m-\left(\lambda+2\right)}=\frac{-4}{\lambda\left(m+2\right)+4}\label{eqnzsquare}
\end{equation}
which is equivalent to the first when $m=0$. Only the latter case
with $m>0$ gives us a nontrivial solution. If we substitute it into
the equation for $\rho$ we are left with the two equations
\begin{eqnarray}
-2(2-2\alpha+\alpha^{2}+\beta^{2}) & = & \lambda,\label{eqnlambdaalphabeta}\\
\frac{2\lambda\left(\lambda+2\right)}{\lambda\left(m+2\right)+4} & = & \rho.\notag
\end{eqnarray}
Note that we always have
\begin{equation*}
2-2\alpha+\alpha^{2}+\beta^{2}\geq1
\end{equation*}
so $\lambda<-2$.

Finally we do a few calculations to show that this is not rigid. Since
$\lambda\neq\rho$ it can only be rigid when the other two eigenvalues
are also $\lambda,\rho.$

We have
\begin{eqnarray}
z\sqrt{m(\rho-\lambda)} & = & -2-\left(\lambda+2\right)\frac{-4}{\lambda\left(m+2\right)+4}\notag\\
& = &
\frac{-2\left(\lambda\left(m+2\right)+4\right)+4\left(\lambda+2\right)}{\lambda\left(m+2\right)+4}= \frac{-2\lambda
m}{\lambda\left(m+2\right)+4}\notag\label{eqnzradical}\\
& = & -\frac{m\rho}{\lambda+2}.\notag
\end{eqnarray}
Thus
\begin{eqnarray*}
\mathrm{scal} & = & \lambda+\rho+\mathrm{Ric}(X_{2},X_{2})+\mathrm{Ric}(X_{3},X_{3})\\
& = & \lambda+\rho+2\lambda+\left(\lambda+2\right)z\sqrt{m(\rho-\lambda)}\\
& = & 3\lambda-\left(m-1\right)\rho.
\end{eqnarray*}
So the only possibility for having a rigid solution is when
\begin{equation*}
3\lambda-\left(m-1\right)\rho=2\rho+2\lambda
\end{equation*}
which implies
\begin{equation*}
\lambda=\left(m+1\right)\rho=\frac{2\lambda\left(\lambda+2\right)\left(m+1\right)}{\lambda\left(m+2\right)+4}
\end{equation*}
or
\begin{equation*}
\frac{2\left(\lambda+2\right)\left(m+1\right)}{\lambda\left(m+2\right)+4}=1,\quad\mbox{i.e.,}\quad\lambda=-4.
\end{equation*}
It follows that
\begin{equation*}
\rho=-\frac{4}{m+1}
\end{equation*}
and
\begin{equation*}
\mathrm{Ric}(X_{2},X_{2})=-4+\frac{2m}{m+1}\left(2-\alpha\right),\quad\mathrm{Ric}(X_{2},X_{3})= -\frac{2m}{m+1}\beta,\quad\mathrm{Ric}(X_{3},X_{3})=-4+\frac{2m}{m+1}\alpha.
\end{equation*}
The principal Ricci curvatures are $\lambda$ and $\rho$ if $\left(\alpha-1\right)^{2}+\beta^{2}=1$.
In this case, the metric splits along the eigendistributions of $\mathrm{Ric}$.
\end{proof}

Finally we consider the convergence of the metrics in Theorem \ref{thmnonrigidexample}
as $m\rightarrow\infty$. We will see that the Ricci soliton structure
naturally appears in the limiting metric. First we recall

\begin{definition}
A left invariant metric $g$ on a simply-connected
nilpotent(or solvable) Lie group is called a \emph{nilsoliton}(or
\emph{solvsoliton}) if the Ricci tensor satisfies the following equation
\begin{equation}
\mathrm{Ric}(g)=cI+D\label{eqnalgebrasoliton}
\end{equation}
where $c$ is a constant and $D\in\mathrm{Der}(\mathfrak{g})$ is
a derivation.
\end{definition}

These manifolds are closely related to Ricci flows and Einstein metrics
on noncompact homogeneous spaces, see \cite{LauretNil,LauretSol,LauretRFNil}
and references therein.

\begin{theorem} \label{thmnonrigidconvergence}
Let $\left(G,g_{m}\right)$
be the $(\lambda,4+m)$-Einstein metrics in Theorem \ref{thmnonrigidexample}.
As $m\rightarrow\infty$, they converge in $C^{\infty}$ to the Riemannian
product $\left(\mathbb{R}\times H,\mathrm{d}t^{2}+g_{0}\right)$ where
$(H,g_{0})$ is a three dimensional solvsoliton.
\end{theorem}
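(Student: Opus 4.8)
The plan is to track the $m$–dependence of the structure constants produced in Theorem \ref{thmnonrigidexample} and to show that only the brackets $[X_0,X_i]$ vary with $m$, and that these degenerate to zero as $m\to\infty$. Recall that $\lambda=-2(2-2\alpha+\alpha^{2}+\beta^{2})$ is fixed, while all the coefficients $F_{ij}$ are determined by the single quantity $z=F_{32}/\beta$ through the relations established before and during the proof of Theorem \ref{thmnonrigidexample}, namely $F_{32}=z\beta$, $F_{33}=-(-\alpha+\alpha^{2}+\beta^{2})z$, $F_{22}=F_{33}-\tfrac{2(1-\alpha)}{\beta}F_{32}$, together with
\[
z^{2}=\frac{-4}{\lambda(m+2)+4},\qquad \rho=\frac{2\lambda(\lambda+2)}{\lambda(m+2)+4}.
\]
Since $\lambda<-2$ is independent of $m$, both $z\to0$ and $\rho\to0$, hence $F_{22},F_{32},F_{33}\to0$. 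The brackets $[X_1,X_2]$ and $[X_1,X_3]$ do not depend on $m$, so in the limit the only surviving nonzero brackets are those of $\lieh=\mathrm{span}\{X_1,X_2,X_3\}$ and $X_0$ becomes central. Thus the limiting Lie algebra is the orthogonal direct sum of ideals $\Real X_0\oplus\lieh$, which integrates to the Riemannian product $\Real\times H$.

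For the convergence I would argue as follows. Using exponential coordinates we identify each group with $\Real^{4}$, and in the fixed orthonormal left invariant frame $\{X_0,X_1,X_2,X_3\}$ the Levi-Civita connection, the curvature, and the metric components are all polynomial (hence smooth) functions of the structure constants. Since every $m$–dependent constant $F_{ij}(m)$ tends to $0$ while the remaining constants are fixed, the metrics $g_m$ converge in $C^{\infty}$ on compact sets to the left invariant metric associated to $\Real X_0\oplus\lieh$, which is exactly $\df t^{2}+g_{0}$.

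It remains to identify the limiting Ricci tensor of the factor $(H,g_{0})$ and to verify the solvsoliton equation. Here the bookkeeping is the delicate point, because $z$, $\rho$, and $\sqrt{m(\rho-\lambda)}$ individually degenerate (to $0$, $0$, and $\infty$), so one must extract the finite limits of the products that actually enter the Ricci formulas. From the identity $z\sqrt{m(\rho-\lambda)}=-\tfrac{m\rho}{\lambda+2}$ obtained in the proof of Theorem \ref{thmnonrigidexample}, together with $m\rho\to2(\lambda+2)$, one gets $z\sqrt{m(\rho-\lambda)}\to-2$. Feeding this into the formulas for $\Ric(X_i,X_j)$ and using that $\Ric$ preserves the splitting, the Ricci operator of $(H,g_{0})$ in the basis $\{X_1,X_2,X_3\}$ becomes
\[
\Ric_H=\begin{pmatrix}-2s&0&0\\0&-2\alpha&-2\beta\\0&-2\beta&-2(2-\alpha)\end{pmatrix},\qquad s=2-2\alpha+\alpha^{2}+\beta^{2}.
\]

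Finally I would set $c=\lambda=-2s$ and $D=\Ric_H-cI$ and check that $D\in\mathrm{Der}(\lieh)$. Since $DX_1=(-2s-c)X_1=0$ and $D$ preserves the abelian ideal $\mathrm{span}\{X_2,X_3\}$, the bracket $[X_2,X_3]=0$ gives no condition, and the only nontrivial derivation identities, coming from $[X_1,X_2]$ and $[X_1,X_3]$, reduce to the single matrix equation $[D|_{23},M]=0$, where $M=\left(\begin{smallmatrix}\alpha&\beta\\\beta&2-\alpha\end{smallmatrix}\right)$ is the matrix of $\mathrm{ad}_{X_1}$ on $\mathrm{span}\{X_2,X_3\}$. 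A direct computation shows that this commutator vanishes (its off–diagonal entry is $\beta(4-4\alpha)-4\beta(1-\alpha)=0$), so $D$ is a derivation and $\Ric_H=cI+D$; hence $(H,g_{0})$ is a solvsoliton. The main obstacle is thus not the final linear-algebra verification but the careful extraction of the limiting Ricci operator, where one must pair the vanishing factors against the blowing-up factors to obtain the finite matrix displayed above.
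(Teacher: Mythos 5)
Your proposal is correct and follows essentially the same route as the paper: convergence of the structure constants (with $z\to 0$ collapsing the brackets $[X_0,X_i]$), the key limit $z\sqrt{m(\rho-\lambda)}=-\tfrac{m\rho}{\lambda+2}\to-2$ to extract the finite Ricci operator of $(H,g_0)$, and the derivation $D=\mathrm{Ric}_H-\lambda I$ (which is the paper's $D(X_i)=-\lambda X_i-2\,\mathrm{ad}_{X_1}(X_i)$, commuting with $\mathrm{ad}_{X_1}$ for the trivial reason you note). The only cosmetic difference is that you justify the $C^\infty$ convergence via exponential coordinates rather than citing the references the paper uses.
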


\begin{proof}
Recall that $\left\{ X_{0},X_{1},X_{2},X_{3}\right\} $ is an orthonormal
frame of left invariant vector fields on $G$ and then any left invariant
metric $g$ is determined by the structure constants $C_{ijk}=g([X_{i},X_{j}],X_{k})$
for $i,j,k=0,\ldots,3$. That the metrics $g_{m}$ converge to $g_{\infty}$
in $C^{\infty}$ is equivalent to that the structure constants $C_{ijk}^{m}$
of $g_{m}$ converge to those of $g_{\infty}$, see for example \cite[Proposition 2.8]{LauretRFNil}
when $G$ is a nilpotent group. Alternatively, the $C^{\infty}$ convergence
of the structure constants implies $C^{\infty}$ convergence of the
Levi-Civita connections and then the convergence of the metrics follows,
see for example \cite[Section 6]{GlickensteinPayne}.

From the equation (\ref{eqnlambdaalphabeta}) of $\lambda$ in terms
of $\alpha$ and $\beta$, since $\lambda$ is unchanged for the metrics
$g_{m}$, $\alpha$ and $\beta$ are also unchanged. From the equation
(\ref{eqnzsquare})
\begin{equation*}
\lim_{m\rightarrow\infty}z^{2}=\lim_{m\rightarrow\infty}\frac{-4}{\lambda\left(m+2\right)+4}=0
\end{equation*}
It follows that $F_{32}$ and $F_{33}$ converge to zero, i.e., $[X_{0},X_{i}]=0$
for all $i=0,\ldots,3$ as $m\rightarrow\infty$. So the Lie algebra
$\mathfrak{g}$ corresponding to $g_{\infty}$ on $G$ splits as direct
sum of $\mathbb{R}X_{0}\oplus\mathfrak{h}$ where $\mathfrak{h}$
is spanned by $X_{1},X_{2}$ and $X_{3}$, and the limiting metric
$g_{\infty}=\mathrm{d}t^{2}+g_{0}$ where $g_{0}$ is the restriction
on the Lie group $H$ whose Lie algebra is $\mathfrak{h}$. On $H$,
the nonzero Lie brackets are given by
\begin{equation*}
\mathrm{ad}_{X_{1}}
\begin{pmatrix}X_{2}\\
X_{3}
\end{pmatrix}=
\begin{pmatrix}\alpha & \beta\\
\beta & 2-\alpha
\end{pmatrix}
\begin{pmatrix}X_{2}\\
X_{3}
\end{pmatrix}.
\end{equation*}
A computation shows that the nonzero Ricci curvatures are
\begin{eqnarray*}
\mathrm{Ric}(X_{1},X_{1}) & = & \lambda\\
\mathrm{Ric}(X_{2},X_{2}) & = & -2\alpha\\
\mathrm{Ric}(X_{2},X_{3}) & = & -2\beta\\
\mathrm{Ric}(X_{3},X_{3}) & = & -2(2-\alpha).
\end{eqnarray*}
Let $D$ be the derivation on $\mathfrak{h}$ as $D(X_{1})=0$ and
$D(X_{i})=-\lambda X_{i}-2\mathrm{ad}_{X_{1}}(X_{i})$ for $i=2,3$,
then we have $\mathrm{Ric}=\lambda I+D$ which shows $(H,g_{0})$
is a solvsoliton. The solvsoliton structure can also be obtained by
viewing $\mathfrak{h}$ as an extension of the abelian Lie algebra
$\mathrm{span}_{\mathbb{R}}\left\{ X_{2},X_{3}\right\} $ by $\mathbb{R}X_{1}$,
see the construction in \cite[Proposition 4.3]{LauretSol}. Note the
solvsoliton metric is not Einstein since $\beta\neq0$.

This derivation can be constructed directly as a limit in the following
way: we know that
\begin{equation*}
\frac{m}{w}\nabla\nabla w=\sqrt{m\left(\rho-\lambda\right)}\nabla X_{0}+(\rho-\lambda)X_{0}^{\flat}\otimes
X_{0}^{\flat},
\end{equation*}
where $X_{0}^{\flat}$ is a $1$-form dual to $X_{0}$. And the only
nontrivial covariant derivatives for $X_{0}$ are
\begin{eqnarray*}
\nabla_{X_{2}}X_{0} & = & \left(2-3\alpha+\alpha^{2}+\beta^{2}\right)-\beta zX_{3}\\
& = & \left(-\frac{\lambda}{2}-\alpha\right)zX_{2}-\beta zX_{3}\\
\nabla_{X_{3}}X_{0} & = & -\beta zX_{2}+\left(-\alpha+\alpha^{2}+\beta^{2}\right)zX_{3}\\
& = & -\beta zX_{2}+\left(-\frac{\lambda}{2}+2-\alpha\right)zX_{3}.
\end{eqnarray*}
Next note that
\begin{equation*}
\lim_{m\rightarrow\infty}z\sqrt{m\left(\rho-\lambda\right)}= \lim_{m\rightarrow\infty}z\sqrt{\frac{-4m\left(\rho-\lambda\right)}{\lambda\left(m+2\right)+4}}=-2
\end{equation*}
so it follows that $\frac{m}{w}\nabla\nabla w$ converges to the derivation $D$ as $m\rightarrow\infty.$

A similar limit argument also shows that the Ricci curvatures of $g_{m}$ converge to those of the limiting metric $g_{\infty}$.
\end{proof}

\medskip{}

\appendix

\section{An integrability criterion of eigen-distribution of $\mathrm{Ric}$ }

In this appendix we will show the eigen-distributions of $P$(or $\mathrm{Ric}$) in Theorem \ref{thmradialQ} are integrable without using the curvature growth assumption. It follows from the following general integrability criterion of eigen-distribution of Ricci curvature.

\begin{theorem}\label{thmRiccidistributionintegrable} 
Suppose $g_{t}=h_{0}+th_{1}(t>0)$ is a family of Riemannian metric on $\Sigma$ such that \begin{eqnarray*}
T\Sigma & = & \mathcal{N}\oplus\mathcal{P},\\
g_{1}|_{\mathcal{N}} & = & h_{0},\\
g_{1}|_{\mathcal{P}} & = & h_{1},
\end{eqnarray*}
where $\mathcal{N}$ and $\mathcal{P}$ are two orthogonal distributions. If the Ricci curvature of $g_{t}$ satisfies the following condition
\begin{equation*}
\mathrm{Ric}^{t}=\lambda h_{0}+\sigma h_{1}
\end{equation*}
where $\lambda\ne\sigma$ are two constants, then the two distributions $\mathcal{N}$ and $\mathcal{P}$ are integrable. 
\end{theorem}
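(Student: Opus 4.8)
The plan is to exploit that the whole one-parameter family $g_t = h_0 + th_1$ is available, and to read off the integrability of $\mathcal{N}$ and $\mathcal{P}$ from the way the curvature of $g_t$ depends on $t$. First I would fix a $g_1$-orthonormal local frame $\{X_i\}$ spanning $\mathcal{N}$ and $\{U_\alpha\}$ spanning $\mathcal{P}$, so that $g_t(X_i,X_j) = \delta_{ij}$, $g_t(U_\alpha,U_\beta) = t\,\delta_{\alpha\beta}$ and $g_t(X_i,U_\alpha) = 0$. In this fixed frame the bracket structure functions are independent of $t$, and the two integrability tensors are encoded by the mixed components $g([X_i,X_j],U_\gamma)$ (the obstruction $A_{\mathcal{N}}$ to integrating $\mathcal{N}$) and $g([U_\alpha,U_\beta],X_k)$ (the obstruction $A_{\mathcal{P}}$ to integrating $\mathcal{P}$); the conclusion we want is exactly $A_{\mathcal{N}}\equiv 0$ and $A_{\mathcal{P}}\equiv 0$.

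The hypothesis $\mathrm{Ric}^{t} = \lambda h_0 + \sigma h_1$ becomes, in this frame, $\mathrm{Ric}^{t}(X_i,X_j)=\lambda\delta_{ij}$, $\mathrm{Ric}^{t}(U_\alpha,U_\beta)=\sigma\delta_{\alpha\beta}$ and $\mathrm{Ric}^{t}(X_i,U_\alpha)=0$, all independent of $t$; in particular the scalar curvature must equal $\mathrm{scal}^{t} = a\lambda + b\sigma t^{-1}$, where $a=\dim\mathcal{N}$ and $b=\dim\mathcal{P}$. On the other hand, applying Koszul's formula in the fixed frame shows that every Christoffel symbol, and hence every component of the curvature tensor and of $\mathrm{scal}^{t}$, is a Laurent polynomial in $t$ with bounded integer exponents whose coefficients are pointwise expressions in the ($t$-independent) structure functions. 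Comparing with $a\lambda + b\sigma t^{-1}$ forces every coefficient of $\mathrm{scal}^{t}$ other than those of $t^{0}$ and $t^{-1}$ to vanish identically on $\Sigma$.

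The heart of the argument is then to compute the two extreme coefficients and to recognize the integrability tensors in them. Since scaling $\mathcal{P}$ by $t$ makes $|A_{\mathcal{N}}|^{2}_{g_t}$ scale like $t$ and $|A_{\mathcal{P}}|^{2}_{g_t}$ like $t^{0}$, while the second fundamental forms and the intrinsic curvatures of the two distributions sit at other powers, a Gauss--O'Neill type bookkeeping should show that the coefficient of $t^{+1}$ in $\mathrm{scal}^{t}$ is a fixed sign-definite multiple of $|A_{\mathcal{N}}|^{2}$ and the coefficient of $t^{-2}$ is a fixed sign-definite multiple of $|A_{\mathcal{P}}|^{2}$. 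As neither $t^{+1}$ nor $t^{-2}$ occurs in $a\lambda + b\sigma t^{-1}$, both coefficients vanish pointwise, giving $A_{\mathcal{N}}=A_{\mathcal{P}}=0$ and hence the integrability of both distributions. The main obstacle I expect is precisely this last step: one must check that at these two extreme orders the only surviving contributions are squared norms of the integrability tensors (the O'Neill mixing terms and the second-fundamental-form terms must be verified to land at strictly different powers of $t$), and that the several such contributions combine with one definite sign so that no cancellation can occur. Working at the extreme exponents is what makes this tractable, since the fewest terms contribute there; if the isolation at $\mathrm{scal}^{t}$ proved delicate, I would instead match the individual components $\mathrm{Ric}^{t}(X_i,X_j)$ and $\mathrm{Ric}^{t}(U_\alpha,U_\beta)$, which furnish more relations at the cost of a longer computation.
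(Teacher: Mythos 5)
Your strategy is sound, and the step you flag as the main obstacle does work out, but it is worth comparing your route with the paper's, since they differ in which curvature identity gets expanded in $t$. You expand the scalar curvature directly: with $\{X_i\}$, $\{U_\alpha\}$ a $g_1$-orthonormal frame adapted to the splitting, Koszul's formula shows each block of $\mathrm{scal}^t$ is a Laurent polynomial in $t$, and at order $t^{+1}$ there are in fact two competing contributions --- the $\mathcal{N}$-$\mathcal{N}$ curvatures give $-\frac{3}{4}\sum_{i,j,\alpha}g_1([X_i,X_j],U_\alpha)^2$ (the O'Neill $-3|A|^2$ term) while the mixed curvatures $\frac{2}{t}\sum R^t(X_i,U_\alpha,U_\alpha,X_i)$ give $+\frac{1}{2}\sum_{i,j,\alpha}g_1([X_i,X_j],U_\alpha)^2$ --- so your worry about cancellation is legitimate, but the net coefficient is $-\frac{1}{4}\sum_{i,j,\alpha}g_1([X_i,X_j],U_\alpha)^2$, which is sign-definite, and the $\mathcal{P}$-$\mathcal{P}$ block only enters at order $t^0$ and below; the coefficient of $t^{-2}$ then comes for free from the symmetry $t^{-1}g_t=t^{-1}h_0+h_1$, which swaps the two distributions under $t\mapsto 1/t$ and rescales $\mathrm{scal}$ by $t$. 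The paper instead expands a tensorial identity: from $\mathrm{div}_{g_t}h_1=0$ and $\mathrm{tr}_{g_t}h_1=\mathrm{const}$ (consequences of the hypothesis and the contracted Bianchi identity), the first variation formula for the Ricci tensor along the linear path gives $\Delta_L^t h_1=0$, and the coefficients of $t^{2}$ and $t^{-1}$ in $(\Delta_L^t h_1)(U,U)$ are again sign-definite squares of the two integrability tensors. Your version is more elementary --- it needs only the single scalar identity $\mathrm{scal}^t=\lambda\dim\mathcal{N}+\sigma t^{-1}\dim\mathcal{P}$ and avoids the Lichnerowicz Laplacian and the divergence lemma --- at the cost of expanding the full scalar curvature rather than one component of a Laplacian and of verifying sign-definiteness at both extremes (or invoking the $t\mapsto 1/t$ symmetry). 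Both proofs ultimately rest on the same mechanism: every curvature quantity of $g_t$ is a Laurent polynomial in $t$ with $t$-independent coefficients, and the extreme-order coefficients isolate $|A_{\mathcal{N}}|^2$ and $|A_{\mathcal{P}}|^2$.
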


Before proving Theorem \ref{thmRiccidistributionintegrable}, we shows that in Theorem \ref{thmradialQ} the restriction of the metric $g$ on the level sets of $\nabla r$ satisfies the assumptions in Theorem \ref{thmRiccidistributionintegrable}. Then the integrability of the eigen-distributions of $P$ follows directly.

We denote orthonormal frames of $\mathcal{N}$ and $\mathcal{P}$ by $\left\{ E_{i}\right\} $ and $\left\{ E_{a}\right\}$ respectively and, use $i,j,k,\ldots$ for the indices of $\mathcal{N}$, $a,b,c,\ldots$ for $\mathcal{P}$ and $\alpha,\beta,\gamma,\ldots$. The vector fields in $\mathcal{N}$ and $\mathcal{P}$ are denoted by $X,Y,Z,\ldots$ and $U,V,W,\ldots$ respectively. If $A$ is a vector field in $TM$, then $\mathcal{N}A$ and $\mathcal{P}A$ denote its projections.

Recall from Lemma \ref{lemradialQtwoeigenvalues} the eigenvalue of $P$ is either $0$ or $\lambda-\rho$. Thus we have the orthogonal decomposition of the tangent space of the level sets $\Sigma$ of $r$ as 
\begin{eqnarray*}
M & = & I\times\Sigma,\\
g & = & \mathrm{d}r^{2}+g_{r},\\
T\Sigma & = & \mathcal{N}\oplus\mathcal{P}.
\end{eqnarray*}
Instead of parametrizing by $r$, it will turn out to be more convenient to parametrize the metrics on the level sets by $t=\left(w^{\prime}\right)^{2}(r)$. We consider the one parameter family of metrics $g_{t}$ on $\Sigma$. In the next proposition we compute the Ricci curvature of this family.

\begin{proposition}\label{propriccihypersurface}
The metrics $(\Sigma, g_{t})$ satisfy
\begin{eqnarray*}
g_{t} & = & h_{0}+th_{1}\\
\mathrm{Ric}^{t} & = & \lambda h_{0}+\sigma h_{1}\\
\mathrm{scal}^{t} & = & \lambda\dim\mathcal{N}+\frac{\sigma}{t}\dim\mathcal{P},
\end{eqnarray*}
where $\mathrm{Ric}^{t}$ and $\mathrm{scal}^{t}$ denote the curvatures of $g_{t}$, $h_{0},h_{1}$ are metrics on $\mathcal{N}$ and $\mathcal{P}$ respectively, and $\sigma=\left(\dim\mathcal{P}-1\right)\bar{k}\bar{\mu}$.
\end{proposition}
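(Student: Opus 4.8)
The plan is to read off the intrinsic geometry of the level sets $(\Sigma,g_r)$ from the ambient data already computed in Proposition \ref{propRadialCurvatureOperator}, converting everything to the parameter $t=(w')^2$. First I would pin down the form $g_t=h_0+th_1$. Since the eigen-distributions $\mathcal{N}$ and $\mathcal{P}$ are preserved by the flow of $\nabla r$ (they are parallel along $\nabla r$), the splitting $T\Sigma=\mathcal{N}\oplus\mathcal{P}$ is independent of the level, and the induced metric evolves by the normal flow as $\partial_r g_r=2\,\mathrm{II}$, where $\mathrm{II}=\mathrm{Hess}\,r=\frac{w}{mw'}(\mathrm{Ric}-\lambda g)$. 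Because $\mathcal{N}$ and $\mathcal{P}$ are the $(\lambda-\rho)$- and $0$-eigendistributions of $P$, Proposition \ref{propRadialCurvatureOperator} gives $\mathrm{II}|_{\mathcal{N}}=0$ and $\mathrm{II}|_{\mathcal{P}}=-\bar{k}\frac{w}{w'}\,\mathrm{id}$. Hence $g_r|_{\mathcal{N}}$ is constant in $r$, yielding $h_0$, while $g_r|_{\mathcal{P}}$ satisfies the conformal evolution $\partial_r g_r|_{\mathcal{P}}=-2\bar{k}\frac{w}{w'}\,g_r|_{\mathcal{P}}$; using $w''=-\bar{k}w$ this integrates to $g_r|_{\mathcal{P}}\propto(w')^2$, which is exactly $t\,h_1$ once $h_1$ is normalized at $w'=1$.

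Next I would substitute the known ambient curvatures into the contracted Gauss equation $\mathrm{Ric}^{\Sigma}(A,C)=\big(\mathrm{Ric}^{M}(A,C)-R(A,\nabla r,\nabla r,C)\big)+H\,\mathrm{II}(A,C)-\sum_j\mathrm{II}(A,E_j)\mathrm{II}(E_j,C)$. All ingredients are available: the ambient Ricci tensor equals $\lambda$ on $\mathcal{N}$ and $\rho$ on $\mathcal{P}$ with no cross terms; the radial curvature term $R(A,\nabla r,\nabla r,C)$ vanishes on $\mathcal{N}$ and equals $\bar{k}\,g$ on $\mathcal{P}$ (from $R(U,\nabla r)\nabla r=\bar{k}U$); and $\mathrm{II}$ is $0$ on $\mathcal{N}$ and the umbilic tensor $-\bar{k}\frac{w}{w'}g$ on $\mathcal{P}$, with mean curvature $H=-\bar{k}\frac{w}{w'}\dim\mathcal{P}$. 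The $\mathcal{N}$--$\mathcal{P}$ cross terms all vanish by this block structure, so $\mathrm{Ric}^{\Sigma}|_{\mathcal{N}}=\lambda\,g_r=\lambda h_0$ at once, while on $\mathcal{P}$ one obtains a scalar multiple of $g_r$ with coefficient $(\rho-\bar{k})+(\dim\mathcal{P}-1)\bar{k}^2\frac{w^2}{(w')^2}$.

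The decisive step is to collapse this $\mathcal{P}$-coefficient. Here I would invoke the trace identity: since $P(\nabla w)=0$ and $P$ is $(\lambda-\rho)$ on $\mathcal{N}$ and $0$ on $\mathcal{P}$, the dimension count gives $\mathrm{tr}\,P=\dim\mathcal{N}\,(\lambda-\rho)$, which together with (\ref{trrholam}) and $\dim\mathcal{N}+\dim\mathcal{P}=n-1$ forces the clean relation $\rho=\dim\mathcal{P}\cdot\bar{k}$. Substituting this turns the coefficient into $(\dim\mathcal{P}-1)\bar{k}\big(1+\bar{k}\frac{w^2}{(w')^2}\big)$, and then (\ref{eqnmubar}), i.e.\ $\bar{\mu}=\bar{k}w^2+(w')^2$, rewrites it as $\frac{(\dim\mathcal{P}-1)\bar{k}\bar{\mu}}{(w')^2}=\sigma/t$, giving $\mathrm{Ric}^{\Sigma}|_{\mathcal{P}}=\sigma h_1$ with $\sigma=(\dim\mathcal{P}-1)\bar{k}\bar{\mu}$. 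Tracing $\mathrm{Ric}^{t}=\lambda h_0+\sigma h_1$ against $g_t=h_0+th_1$ finally produces $\mathrm{scal}^{t}=\lambda\dim\mathcal{N}+\frac{\sigma}{t}\dim\mathcal{P}$. I expect the main obstacle to be bookkeeping rather than conceptual: matching the Gauss-equation sign convention to this paper's curvature sign convention, and recognizing that it is precisely the identity $\rho=\dim\mathcal{P}\cdot\bar{k}$ that makes the $\mathcal{P}$-Ricci land on a constant multiple of $h_1$ instead of an $r$-dependent expression.
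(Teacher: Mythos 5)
Your proposal is correct and follows essentially the same route as the paper: derive $g_r=\mathrm{d}r^2+h_0+(w')^2h_1$ from the block structure of $\mathrm{II}=\mathrm{Hess}\,r$, feed the ambient Ricci, the radial curvatures $R(X,\nabla r)\nabla r=0$, $R(U,\nabla r)\nabla r=\bar{k}U$, and the umbilic $\mathrm{II}|_{\mathcal{P}}$ into the contracted Gauss equation, and collapse the $\mathcal{P}$-coefficient via $\rho=\bar{k}\dim\mathcal{P}$ and $\bar{\mu}=\bar{k}w^2+(w')^2$. The only (immaterial) difference is that you obtain $\rho=\bar{k}\dim\mathcal{P}$ from $\mathrm{tr}\,P=\dim\mathcal{N}\,(\lambda-\rho)$ and (\ref{trrholam}), whereas the paper gets it by evaluating $\mathrm{Ric}(\nabla r,\nabla r)=\rho$ against the radial curvatures.
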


\begin{remark}
By multiplying $w$ by a constant, we can always assume that $\sigma\neq\lambda$.
\end{remark}

\begin{remark}
In Proposition \ref{propriccihypersurface}, instead of considering the metrics on a family of hypersurfaces in $M$ we are fixing just one hypersurface $\Sigma$ and then pulling the metrics on the other hypersurfaces back via the flow generated by $\nabla r.$ If $\left\{ E_{i}\right\} $, $\left\{ E_{a}\right\} $ are orthonormal frames of $\mathcal{N}$, $\mathcal{P}$ respectively with respect to $g_{t}$, then the formula tells us that $\left\{ e_{i}=E_{i}\right\} $ and $\left\{ e_{a}=\sqrt{t}E_{a}\right\} $ are orthonormal frames with respect to $g_{1}$.
\end{remark}

\begin{proof}
Write the metric splitting on $\Sigma$ corresponding to some regular value $w_{0}$ as
\begin{equation*}
g_{0}=g_{\mathcal{N}}+g_{\mathcal{P}}.
\end{equation*}
From the $\left(\lambda,n+m\right)$-Einstein equation we know that
\begin{eqnarray*}
\mathcal{L}_{\nabla w}g_{\mathcal{N}} & = & 0,\\
\mathcal{L}_{\nabla w}g_{\mathcal{P}} & = & -2w\bar{k}g_{\mathcal{P}},
\end{eqnarray*}
and using $w=w\left(r\right)$ where $w^{\prime\prime}=-\bar{k}w$ we obtain
\begin{eqnarray*}
\mathcal{L}_{\nabla r}g_{\mathcal{N}} & = & 0,\\
\mathcal{L}_{\nabla r}g_{\mathcal{P}} & = & -2\bar{k}\frac{w}{w^{\prime}}g_{\mathcal{P}},
\end{eqnarray*}
so the metric has the form
\begin{equation*}
g=\mathrm{d}r^{2}+h_{0}+\left(w^{\prime}\right)^{2}h_{1}
\end{equation*}
where $h_{0}$ and $h_{1}$ are restrictions of $g$ on $\mathcal{N}$
and $\mathcal{P}$ at the regular value $w_{0}$ respectively.

As in the proof of Proposition \ref{propRadialCurvatureOperator}, we have the following formulas for the second fundamental form:
\begin{eqnarray*}
\mathrm{II}(X,X) & = & 0\\
\mathrm{II}(X,U) & = & 0\\
\mathrm{II}(U,U) & = & -\bar{k}\frac{w}{w^{\prime}}|U|^{2}.
\end{eqnarray*}

The radial flatness assumption
\begin{equation*}
0=Q(\nabla w,E,E,\nabla w)=R(E,\nabla w,\nabla w,E)+\frac{|\nabla w|^{2}}{m}(\mathrm{Ric}(E,E)-\lambda|E|^{2}).
\end{equation*}
then tells us that the radial curvatures satisfy
\begin{eqnarray*}
R\left(X,\nabla r\right)\nabla r & = & 0,\\
R\left(U,\nabla r\right)\nabla r & = & \bar{k}U,
\end{eqnarray*}
and $\mathrm{Ric}(\nabla r,\nabla r)=\rho$ implies that
\begin{equation*}
\rho=\bar{k}\dim\mathcal{P}.
\end{equation*}
This implies that for any two unit vectors $X\in\mathcal{N}$ and
$U\in\mathcal{P}$, we have
\begin{eqnarray*}
\mathrm{Ric}_{g_{r}}(X,X) & = & \lambda g_{r}(X,X)=\lambda\\
\mathrm{Ric}_{g_{r}}(X,U) & = & 0\\
\mathrm{Ric}_{g_{r}}(U,U) & = &
\rho-\bar{k}+\left(\dim\mathcal{P}-1\right)\bar{k}^{2}\frac{w^{2}}{(w^{\prime})^{2}}= \left(\dim\mathcal{P}-1\right)\bar{k}\left(\bar{k}\frac{w^{2}}{(w^{\prime})^{2}}+1\right)\\
& = & \left(\dim\mathcal{P}-1\right)\bar{k}\bar{\mu}(w^{\prime})^{-2},
\end{eqnarray*}
where
\begin{eqnarray*}
\bar{\mu} & = & \bar{k}w^{2}+|\nabla w|^{2}\\
& = & \bar{k}w^{2}+\left(w^{\prime}\right)^{2},
\end{eqnarray*}
which gives us
\begin{eqnarray*}
\mathrm{Ric}^{t}(E,F) & = & \mathrm{Ric}_{g_{t}}(E,F)\\
& = & \lambda h_{0}(E,F)+\left(\dim\mathcal{P}-1\right)\bar{k}\bar{\mu}h_{1}(E,F)\\
& = & \lambda h_{0}(E,F)+\sigma h_{1}(E,F)\\
\sigma & = & \left(\dim\mathcal{P}-1\right)\bar{k}\bar{\mu}.
\end{eqnarray*}

The formula for the scalar curvature then follows from tracing this equation.
\end{proof}

As the scalar curvature of $g_t$ is constant, we have the following

\begin{corollary} For the metric $g_{t}$, we have
\begin{equation*}
\sum_{i}\nabla_{E_{i}}E_{i}\in\mathcal{N},\quad\mbox{ and }\quad\sum_{a}\nabla_{E_{a}}E_{a}\in\mathcal{P}.
\end{equation*}
\end{corollary}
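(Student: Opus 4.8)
The plan is to deduce everything from the contracted second Bianchi identity applied to $g_t$. By Proposition \ref{propriccihypersurface} we have $\mathrm{Ric}^t = \lambda h_0 + \sigma h_1$ with $\lambda,\sigma$ constant, so the scalar curvature of $g_t$ is constant on $\Sigma$ for each fixed $t$. Hence $\mathrm{div}(\mathrm{Ric}^t) = \frac{1}{2}\,d(\mathrm{scal}^t) = 0$, where $\mathrm{div}$ and $d$ are taken with respect to $g_t$. This divergence-free condition is the entire engine of the proof.

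First I would rewrite the Ricci tensor as a multiple of the metric plus a multiple of a projected metric. Writing $\pi$ for the $g_t$-orthogonal projection onto $\mathcal{P}$ and $g_t|_{\mathcal{N}}$, $g_t|_{\mathcal{P}}$ for the restrictions of $g_t$, the relation $g_t = h_0 + t h_1$ gives
\[
\mathrm{Ric}^t = \frac{\sigma}{t}\, g_t + \left(\lambda - \frac{\sigma}{t}\right) g_t|_{\mathcal{N}}.
\]
Since $\nabla g_t = 0$ we have $\mathrm{div}(g_t) = 0$, and the two Ricci eigenvalues $\lambda$ and $\sigma/t$ are distinct constants (distinctness follows from $\lambda\neq\sigma$ after the rescaling in the remark; it can fail for at most one value of $t$, which is recovered by continuity). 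Therefore $\mathrm{div}(g_t|_{\mathcal{N}}) = 0$, and since $g_t|_{\mathcal{N}} + g_t|_{\mathcal{P}} = g_t$, equivalently $\mathrm{div}(g_t|_{\mathcal{P}}) = 0$.

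Next I would unwind this divergence in terms of the projection $\pi$. A direct computation using metric compatibility shows that, for any $g_t$-orthonormal frame $\{e_\alpha\} = \{E_i\}\cup\{E_a\}$ adapted to $\mathcal{N}\oplus\mathcal{P}$,
\[
\big(\mathrm{div}(g_t|_{\mathcal{P}})\big)(\,\cdot\,) = g_t\!\left(\sum_\alpha (\nabla_{e_\alpha}\pi)(e_\alpha),\ \cdot\right).
\]
Evaluating the covariant derivative of $\pi$ gives $(\nabla_{E_i}\pi)(E_i) = -\mathcal{P}(\nabla_{E_i}E_i)$ on the $\mathcal{N}$-directions and $(\nabla_{E_a}\pi)(E_a) = \mathcal{N}(\nabla_{E_a}E_a)$ on the $\mathcal{P}$-directions, so the vanishing of the divergence becomes the single vector identity
\[
-\sum_i \mathcal{P}(\nabla_{E_i}E_i) + \sum_a \mathcal{N}(\nabla_{E_a}E_a) = 0.
\]

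The key final step is an orthogonality observation that splits this one identity into the two desired statements: the first sum lies in $\mathcal{P}$ and the second in $\mathcal{N}$, and a sum of a $\mathcal{P}$-vector and a $\mathcal{N}$-vector vanishes only if each vanishes. Hence $\sum_i \mathcal{P}(\nabla_{E_i}E_i) = 0$ and $\sum_a \mathcal{N}(\nabla_{E_a}E_a) = 0$, which is exactly $\sum_i \nabla_{E_i}E_i \in \mathcal{N}$ and $\sum_a \nabla_{E_a}E_a \in \mathcal{P}$. The only points requiring care are the sign bookkeeping in computing $\nabla\pi$ and confirming that $\lambda$ and $\sigma/t$ are genuinely distinct; neither is a real obstacle, so I expect the argument to be quite short once the decomposition of $\mathrm{Ric}^t$ above is written down.
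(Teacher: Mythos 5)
Your argument is correct and is essentially the paper's proof: both start from the contracted second Bianchi identity (constant $\mathrm{scal}^t$ gives $\mathrm{div}_{g_t}\mathrm{Ric}^t=0$), combine it with $\mathrm{div}_{g_t}g_t=0$ and the distinctness of the two constant eigenvalues to conclude that the divergence of the eigenprojected metric vanishes away from the single exceptional value of $t$, and then recover that value by continuity. Your packaging via the projection $\pi$ and the single identity $-\sum_i\mathcal{P}(\nabla_{E_i}E_i)+\sum_a\mathcal{N}(\nabla_{E_a}E_a)=0$ split by orthogonality is a tidy variant of the paper's direct expansion of $\mathrm{div}_{g_t}h_0$ and $\mathrm{div}_{g_t}h_1$, but the underlying mechanism is identical.
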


\begin{proof}
The scalar curvature is constant on $\Sigma$. Thus, by the second
Bianchi identity,
\begin{equation*}
0=\mathrm{div}_{g_{t}}\mathrm{Ric}^{t}=\lambda\mathrm{div}_{g_{t}}h_{0}+\sigma\mathrm{div}_{g_{t}}h_{1}.
\end{equation*}
On the other hand
\begin{equation*}
0=\mathrm{div}_{g_{t}}g_{t}=\mathrm{div}_{g_{t}}h_{0}+t\mathrm{div}_{g_{t}}h_{1}.
\end{equation*}
So if $t\neq\sigma/\lambda$, then
\begin{equation*}
\mathrm{div}_{g_{t}}h_{0}=\mathrm{div}_{g_{t}}h_{1}=0.
\end{equation*}

For any vector $X\in\mathcal{N}$ we then have
\begin{eqnarray*}
0 & = & (\mathrm{div}h_{0})(X)=\sum_{\alpha}\left(\nabla_{E_{\alpha}}h_{0}\right)(E_{\alpha},X)\\
& = &
\nabla_{E_{\alpha}}\left(h_{0}(E_{\alpha},X)\right)-h_{0}\left(\nabla_{E_{\alpha}}E_{\alpha},X\right)- h_{0}(E_{\alpha},\nabla_{E_{\alpha}}X)\\
& = &
\nabla_{E_{\alpha}}\left(g_{t}(E_{\alpha},X)\right)-g_{t}\left(\nabla_{E_{\alpha}}E_{\alpha},X\right)- g_{t}(E_{i},\nabla_{E_{i}}X)\\
& = & g_{t}(\nabla_{E_{a}}E_{a},X),
\end{eqnarray*}
which shows that $\sum_{a}\nabla_{E_{a}}E_{a}\in\mathcal{P}$ when $t\neq\sigma/\lambda$. However, by continuity, this must also be true at $t=\sigma/\lambda$. Similarly, we have $\sum_{i}\nabla_{E_{i}}E_{i}\in\mathcal{N}$.
\end{proof}

The integrability of the eigen-distributions follows from a more careful
analysis of the formula for $\mathrm{Ric}^{t}$.

\begin{proof}[Proof of Theorem \ref{thmRiccidistributionintegrable}]
We consider $g_{t}$ as a variation along the $h_{1}$ direction as $g_{t}(s)=g_{t+s}$ for $s>0$. From the assumption of the Ricci curvature, we have
\begin{equation*}
\mathrm{Ric}^{t+s}(E_{i},E_{j})=\lambda\delta_{ij}\quad\mbox{and}\quad\mathrm{Ric}^{t+s}(E_{a},E_{b})=\sigma
h_{1}(E_{a},E_{b})=\frac{\sigma}{t}\delta_{ab}.
\end{equation*}
As $\mathrm{tr}_{g_{t}}h_{1}$ is constant and $\mathrm{div}_{g_{t}}h_{1}=0$, it follows from the first variation formula of the Ricci tensor, see \cite[1.174(d)]{BesseEinstein} that
\begin{equation*}
0=\frac{1}{2}\Delta_{L}^{t}h_{1},
\end{equation*}
where $\Delta_{L}^{t}$ is the Lichnerowicz Laplacian with respect to the metric $g_{t}$, see \cite[1.180b]{BesseEinstein}. In the following we compute the expansion of $\Delta_{L}^{t}$ at $t=1$. The vanishing of the coefficients will show that both distributions are integrable.

Using Koszul's formula, the connection of $g_{t}$ is determined by
\begin{eqnarray*}
g_{t}(\nabla_{X}Y,Z) & = & g_{1}(\nabla_{X}Y,Z)\\
g_{t}(\nabla_{X}Y,U) & = & g_{1}(\nabla_{X}Y,U)+\frac{t-1}{2}g_{1}([X,Y],U)\\
g_{t}(\nabla_{X}U,Y) & = & g_{1}(\nabla_{X}U,Y)+\frac{t-1}{2}g_{1}([Y,X],U)\\
g_{t}(\nabla_{X}U,V) & = & tg_{1}(\nabla_{X}U,V)+\frac{t-1}{2}g_{1}([U,V],X);
\end{eqnarray*}
and
\begin{eqnarray*}
g_{t}(\nabla_{U}X,Y) & = & g_{1}(\nabla_{U}X,Y)+\frac{t-1}{2}g_{1}([Y,X],U)\\
g_{t}(\nabla_{U}X,V) & = & tg_{1}(\nabla_{U}X,V)+\frac{t-1}{2}g_{1}([U,V],X)\\
g_{t}(\nabla_{U}V,W) & = & tg_{1}(\nabla_{U}V,W)\\
g_{t}(\nabla_{U}V,X) & = & tg_{1}(\nabla_{U}V,X)+\frac{t-1}{2}g_{1}([V,U],X).
\end{eqnarray*}

Since $\mathrm{Ric}=\lambda h_{0}+\sigma h_{1}$, we have $\mathrm{Ric}\circ h_{1}=h_{1}\circ\mathrm{Ric}=\frac{\sigma}{t}h_{1}$ as symmetric 2-tensors which implies that
\begin{equation}
\Delta_{L}^{t}h_{1}=\nabla^{\ast}\nabla h_{1}+\frac{2\sigma}{t}h_{1}- 2\dot{R}_{g_{t}}h_{1},\label{eqnLichLap}
\end{equation}
where $\dot{R}_{g}h$ is defined by
\begin{equation*}
(\dot{R}_{g_{t}}h)(A,B)=\sum_{\alpha}h(R^{t}(E_{\alpha},A)B,E_{\alpha})= \sum_{\alpha,\beta}R^{t}(E_{\alpha},A,B,E_{\beta})h(E_{\alpha},E_{\beta})
\end{equation*}
for any symmetric 2-tensor $h$ and $R^{t}$ is the (3,1) or (4,0) curvature tensor of $g_{t}$. It follows that
\begin{equation*}
\dot{R}_{g_{t}}h_{1}(A,B)=\frac{1}{t}\sum_{a}R^{t}(E_{a},A,B,E_{a}).
\end{equation*}
Let $H=\sum_{\alpha}\nabla_{E_{\alpha}}E_{\alpha}$ and then the term $\nabla^{\ast}\nabla h_{1}$ can be computed as
\begin{eqnarray*}
&  & (\nabla^{\ast}\nabla h_{1})(A,B)\\
& = & -\sum_{\alpha}\left(\nabla_{E_{\alpha},E_{\alpha}}^{2}h_{1}\right)(A,B)\\
& = & -\nabla_{E_{\alpha}}\left((\nabla h_{1})(E_{\alpha},A,B)\right)+(\nabla h_{1})(H,A,B)+(\nabla
h_{1})(E_{\alpha},\nabla_{E_{\alpha}}A,B)+(\nabla h_{1})(E_{\alpha},A,\nabla_{E_{\alpha}}B)\\
& = &
-\nabla_{E_{\alpha}}\nabla_{E_{\alpha}}(h_{1}(A,B))+2\nabla_{E_{\alpha}}(h_{1}(\nabla_{E_{\alpha}}A,B))+ 2\nabla_{E_{\alpha}}(h_{1}(A,\nabla_{E_{\alpha}}B))\\
&  & +\nabla_{H}(h_{1}(A,B))-h_{1}(\nabla_{H}A,B)-h_{1}(A,\nabla_{H}B)\\
&  &
-h_{1}(\nabla_{E_{\alpha}}\nabla_{E_{\alpha}}A,B)-2h_{1}(\nabla_{E_{\alpha}}A,\nabla_{E_{\alpha}}B)- h_{1}(A,\nabla_{E_{\alpha}}\nabla_{E_{\alpha}}B).
\end{eqnarray*}
It follows that
\begin{eqnarray*}
(\nabla^{\ast}\nabla h_{1})(X,Y) & = & -2h_{1}(\nabla_{E_{\alpha}}X,\nabla_{E_{\alpha}}Y)\\
& = & -\frac{2}{t}g_{t}(\nabla_{E_{\alpha}}X,E_{a})g_{t}(\nabla_{E_{\alpha}}Y,E_{a}),\\
(\nabla^{\ast}\nabla h_{1})(X,U) & = &
\frac{1}{t}g_{t}(\nabla_{E_{\alpha}}\nabla_{E_{\alpha}}X,U)+ \frac{2}{t}g_{t}(\nabla_{E_{\alpha}}X,E_{i})g_{t}(\nabla_{E_{\alpha}}U,E_{i})-\frac{1}{t}g_{t}(\nabla_{H}X,U),\\
(\nabla^{\ast}\nabla h_{1})(U,V) & = &
-\frac{1}{t}\nabla_{E_{\alpha}}\nabla_{E_{\alpha}}(g_{t}(U,V))+ \frac{2}{t}\nabla_{E_{\alpha}}(g_{t}(\nabla_{E_{\alpha}}U,V))+ \frac{2}{t}\nabla_{E_{\alpha}}(g_{t}(U,\nabla_{E_{\alpha}}V))\\
&  &
-\frac{1}{t}g_{t}(\nabla_{E_{\alpha}}\nabla_{E_{\alpha}}U,V)- \frac{2}{t}g_{t}(\nabla_{E_{\alpha}}U,E_{a})g_{t}(\nabla_{E_{\alpha}}V,E_{a})- \frac{1}{t}g_{t}(U,\nabla_{E_{\alpha}}\nabla_{E_{\alpha}}V)\\
& = &
\frac{2}{t}g_{t}(\nabla_{E_{\alpha}}U,\nabla_{E_{\alpha}}V)- \frac{2}{t}g_{t}(\nabla_{E_{\alpha}}U,E_{a})g_{t}(\nabla_{E_{\alpha}}V,E_{a})\\
& = & \frac{2}{t}g_{t}(\nabla_{E_{\alpha}}U,E_{i})g_{t}(\nabla_{E_{\alpha}}V,E_{i}).
\end{eqnarray*}

We choose an arbitrary vector field $U$ in $\mathcal{P}$. Since $(\Delta_{L}^{t}h_{1})(U,U)=0$, we have
\begin{equation}
\frac{t}{2}(\nabla^{\ast}\nabla h_{1})(U,U)+\sigma h_{1}(U,U)=\sum_{a}R^{t}(E_{a},U,U,E_{a}).\label{eqnLapUU}
\end{equation}

The first term on the left hand side is
\begin{eqnarray*}
\frac{t}{2}(\nabla^{\ast}\nabla h_{1})(U,U) & = &
g_{t}(\nabla_{E_{\alpha}}U,E_{i})g_{t}(\nabla_{E_{\alpha}}U,E_{i})=g_{t}(\nabla_{E_{j}}U,E_{i})^{2}+ g_{t}(\nabla_{E_{a}}U,E_{i})^{2}\\
& = & \left(g_{1}(\nabla_{E_{j}}U,E_{i})+\frac{t-1}{2}g_{1}([E_{i},E_{j}],U)\right)^{2}\\
&  & +\left(tg_{1}(\nabla_{E_{a}}U,E_{i})+\frac{t-1}{2}g_{1}([U,E_{a}],E_{i})\right)^{2}\\
& = & \left(g_{1}(\nabla_{e_{j}}U,e_{i})+\frac{t-1}{2}g_{1}([e_{i},e_{j}],U)\right)^{2}\\
&  & +\left(\sqrt{t}g_{1}(\nabla_{e_{a}}U,e_{i})+\frac{t-1}{2\sqrt{t}}g_{1}([U,e_{a}],e_{i})\right)^{2}.
\end{eqnarray*}
In particular, the highest power of $t$ in the expansion above is $t^{2}$ and the lowest one is $t^{-1}$. We call the terms with $t$'s power $0$ or $1$ the middle order terms. Then we have
\begin{equation*}
\frac{t}{2}(\nabla^{\ast}\nabla
h_{1})(U,U)=\frac{t^{2}}{4}\sum_{i,j}g_{1}\left([e_{i},e_{j}],U\right)^{2}+ \frac{1}{4t}\sum_{a,i}g_{1}\left([U,e_{a}],e_{i}\right)^{2}+\mbox{
middle order terms}.
\end{equation*}
The second term on the left hand side is
\begin{equation*}
\sigma h_{1}(U,U)=\sigma g_{1}(U,U).
\end{equation*}
Therefore the expansion of the left hand side is given by
\begin{equation}
\left(\frac{t}{2}(\nabla^{\ast}\nabla h_{1})+\sigma
h_{1}\right)(U,U)=\frac{t^{2}}{4}\sum_{i,j}g_{1}\left([e_{i},e_{j}],U\right)^{2}+ \frac{1}{4t}\sum_{a,i}g_{1}\left([U,e_{a}],e_{i}\right)^{2}+\mbox{
middle order terms}.\label{eqnexpansionLHS}
\end{equation}

The right hand side is
\begin{eqnarray*}
\sum_{a}R^{t}(E_{a},U,U,E_{a}) & = &
D_{E_{a}}g_{t}(\nabla_{U}U,E_{a})-g_{t}(\nabla_{U}U,\sum_{a}\nabla_{E_{a}}E_{a})- D_{U}g_{t}(\nabla_{E_{a}}U,E_{a})\\
&  & +g_{t}(\nabla_{E_{a}}U,\nabla_{U}E_{a})-g_{t}(\nabla_{\lbrack E_{a},U]}U,E_{a}).
\end{eqnarray*}
We calculate each term in $R^{t}(E_{a},U,U,E_{a})$ and only consider
the term $t^{2}$ and $t^{-1}$. We have

\begin{eqnarray*}
D_{E_{a}}g_{t}(\nabla_{U}U,E_{a}) & = &
D_{E_{a}}\left(tg_{1}(\nabla_{U}U,E_{a})\right)=D_{e_{a}}g_{1}(\nabla_{U}U,e_{a})=\mbox{ middle order terms},\\
g_{t}(\nabla_{U}U,\nabla_{E_{a}}E_{a}) & = &
g_{t}(\nabla_{U}U,E_{b})g_{t}(\nabla_{E_{a}}E_{a},E_{b})=tg_{1}(\nabla_{U}U,E_{b})\cdot
tg_{1}(\nabla_{E_{a}}E_{a},E_{b})\\
& = & g_{1}(\nabla_{U}U,e_{b})g_{1}(\nabla_{e_{a}}e_{a},e_{b})=g_{1}(\nabla_{U}U,\nabla_{e_{a}}e_{a})=\mbox{
middle order terms},\\
D_{U}g_{t}(\nabla_{E_{a}}U,E_{a}) & = &
D_{U}\left(tg_{1}(\nabla_{E_{a}}U,E_{a})\right)=D_{U}g_{1}(\nabla_{e_{a}}U,e_{a})=\mbox{ middle order terms};
\end{eqnarray*}
and
\begin{eqnarray*}
g_{t}(\nabla_{E_{a}}U,\nabla_{U}E_{a}) & = &
g_{t}(\nabla_{E_{a}}U,E_{i})g_{t}(\nabla_{U}E_{a},E_{i})+ g_{t}(\nabla_{E_{a}}U,E_{b})g_{t}(\nabla_{U}E_{a},E_{b})\\
& = & \left(tg_{1}(\nabla_{E_{a}}U,E_{i})+\frac{t-1}{2}g_{1}([U,E_{a}],E_{i})\right)\cdot\\
&  & \left(tg_{1}(\nabla_{U}E_{a},E_{i})+\frac{t-1}{2}g_{1}([E_{a},U],E_{i})\right)\\
&  & +tg_{1}(\nabla_{E_{a}}U,E_{b})\cdot tg_{1}(\nabla_{U}E_{a},E_{b})\\
& = & \left(\sqrt{t}g_{1}(\nabla_{e_{a}}U,e_{i})+\frac{t-1}{2\sqrt{t}}g_{1}([U,e_{a}],e_{i})\right)\cdot\\
&  & \left(\sqrt{t}g_{1}(\nabla_{U}e_{a},e_{i})+\frac{t-1}{2\sqrt{t}}g_{1}([e_{a},U],e_{i})\right)\\
&  & +g_{1}(\nabla_{e_{a}}U,e_{b})\cdot g_{1}(\nabla_{U}e_{a},e_{b})\\
& = & -\frac{1}{4t}\sum_{a,i}g_{1}\left([U,e_{a}],e_{i}\right)^{2}+\mbox{ middle order terms},\\
g_{t}(\nabla_{\lbrack E_{a},U]}U,E_{a}) & = &
g_{t}(\nabla_{\mathcal{N}[E_{a},U]}U,E_{a})+g_{t}(\nabla_{\mathcal{P}[E_{a},U]}U,E_{a})\\
& = &
tg_{1}(\nabla_{\mathcal{N}[E_{a},U]}U,E_{a})+\frac{t-1}{2}g_{1}([U,E_{a}],\mathcal{N}[E_{a},U])+ tg_{1}(\nabla_{\mathcal{P}[E_{a},U]}U,E_{a})\\
& = &
g_{1}(\nabla_{\mathcal{N}[e_{a},U]}U,e_{a})+\frac{t-1}{2t}g_{1}([U,e_{a}],\mathcal{N}[e_{a},U])+ g_{1}(\nabla_{\mathcal{P}[e_{a},U]}U,e_{a})\\
& = & \frac{1}{2t}\sum_{a,i}g_{1}\left([U,e_{a}],e_{i}\right)^{2}+\mbox{ middle order terms.}
\end{eqnarray*}
It follows that
\begin{equation}
\sum_{a}R^{t}(E_{a},U,U,E_{a})=-\frac{3}{4t}\sum_{a,i}g_{1}\left([U,e_{a}],e_{i}\right)^{2}+\mbox{ middle order
terms.}.\label{eqnexpansionRh1}
\end{equation}
Since the two expansions given by (\ref{eqnexpansionLHS}) and (\ref{eqnexpansionRh1})
in the variable $t$ are equal and $U$ is an arbitrary vector field
in $\mathcal{P}$, both distributions $\mathcal{N}$ and $\mathcal{P}$
are integrable.
\end{proof}

\medskip{}

\medskip{}


\begin{thebibliography}{Zh}
\bibitem[An]{Anderson} M. T. Anderson, \emph{Scalar curvature, metric degenerations and the static vacuum Einstein equations on $3$-manifolds. I}, Geom. Funct. Anal. \textbf{9}(1999), no. 5, 855--967.

\bibitem[Be]{BesseEinstein} A. L. Besse, \emph{Einstein manifolds}, Ergebnisse der Mathematik und ihrer Grenzgebiete(3){[}Results in Mathematics and Related Areas(3){]}, \textbf{10}. Springer-Verlag, Berlin, 1987.

\bibitem[BH]{BlumenthalHebda} R. A. Blumenthal and J. J. Hebda, \emph{De Rham decomposition theorems for foliated manifolds}, Ann. Inst. Fourier(Grenoble)\textbf{33}(1983), 183--198.

\bibitem[B\"{o}1]{Bohm1} C. B\"{o}hm, \emph{Inhomogeneous {E}instein metrics on low-dimensional spheres and other low-dimensional spaces}, Invent. Math. (1998) vol. \textbf{134}, no 1., 145-176.

\bibitem[B\"{o}2]{Bohm2} C. B\"{o}hm, \emph{Non-compact cohomogeneity one Einstein manifolds}, Bull. Soc. math. France (1999) vol. \textbf{127}, 135--177.

\bibitem[Bu]{Bueken} P. Bueken, \emph{Three-dimensional Riemannian manifolds with constant principal Ricci curvatures $\rho_{1}=\rho_{2}\ne\rho_{3}$}, J. Math. Phys. \textbf{37}(1996), 4062--4075.

\bibitem[BV]{BuekenVanhecke} P. Bueken and L. Vanhecke, \emph{Three- and four-dimensional Einstein-like manifolds and homogeneity}, Geom. Dedicata, \textbf{75}(1999), 123--136.

\bibitem[CSW]{CaseShuWei} J. Case, Y.-J. Shu, and G. Wei, \emph{Rigidity of Quasi-Einstein Metrics}, arXiv: 0805.3132v1, 2008.

\bibitem[CWZ]{CaoWwangZhang} X. Cao, B. Wang and Z. Zhang, \emph{On locally conformally flat gradient shrinking Ricci solitons}, arXiv: 0807.0588.

\bibitem[Co]{Corvino} J. Corvino, \emph{Scalar curvature deformation and a gluing construction for the Einstein constraint equations}, Comm. Math. Phys. \textbf{214} (2000), no. 1, 137--189.

\bibitem[ENM]{EminentiNaveMantegazza} M. Eminenti, G. La Nave and C. Mantegazza, \emph{Ricci solitons: the equation point of view}, Manuscripta Math. \textbf{127}, (2008) 345--367.

\bibitem[GP]{GlickensteinPayne} D. Glickenstein and T. L. Payne, \emph{Ricci flow on three-dimensional, unimodular metric Lie algebras}, arXiv: 0909.0938.

\bibitem[GW]{GW} D. Gromoll and G. Walschap, \emph{Metric foliations and curvature}. Progress in Mathematics, \textbf{268}. Birkh\"{a}user Verlag, Basel, 2009.

\bibitem[Ha]{Hamilton} R. Hamilton. \emph{Three-manifolds with positive Ricci curvature}, J. Diff. Geom. \textbf{17}(1982), 255-306.

\bibitem[HPW1]{HPWLcf} C. He, P. Petersen and W. Wylie, \emph{On the classification of warped product Einstein metrics}, arXiv:1010.5488.

\bibitem[HPW2]{HPWVirtual} C. He, P. Petersen and W. Wylie, \emph{Rigidity of virtual quasi-Einstein metrics}, Preprint, 2010.

\bibitem[Iv]{Ivey} T. Ivey. \emph{\ Ricci solitons on compact three-manifolds}, Differential Geom. Appl. \textbf{3}(1993), no. 4, 301--307.

\bibitem[Je]{Jensen} G. R. Jensen,. \emph{Homogeneous Einstein spaces of dimension four}, J. Differential Geom. \textbf{3}(1969), 309--349.

\bibitem[JW]{JW} D. L. Johnson and L. B. Whitt, \emph{Totally geodesic foliations}, J. Differential Geom. \textbf{15}(1980), no. 2, 225--235.

\bibitem[KK]{KimKim} D.-S. Kim and Y.-H. Kim, \emph{Compact Einstein warped product spaces with nonpositive scalar curvature}, Proc. Amer. Math. Soc. \textbf{131}(2003), no. 8, 2573--2576.

\bibitem[Ko]{Kowalski} O. Kowalski, \emph{A classification of Riemannian 3-manifolds with constant principal Ricci curvatures $\rho_{1}=\rho_{2}\ne\rho_{3}$}, Nagoya Math. J. \textbf{132}(1993), l--36.

\bibitem[La1]{LauretNil} J. Lauret, \emph{Ricci soliton homogeneous nilmanifolds}, Math. Ann. \textbf{319}(2001), 715-733.

\bibitem[La2]{LauretSol} J. Lauret, \emph{Ricci soliton solvmanifolds}, arXiv: 1002.0384, To appear in J. Reine Angew. Math., 2010.

\bibitem[La3]{LauretRFNil} J. Lauret, \emph{The Ricci flow for simply connected nilmanifolds}, arXiv: 1004.0946.

\bibitem[LPP]{LvPagePope} H. L\"{u}, Don N. Page, and C. N. Pope, \emph{New inhomogeneous Eintein metrics on sphere bundles over Einstein-K\"{a}hler manifolds}, Phys. Lett. B \textbf{593} (2004), no. 1-4, 218--226.

\bibitem[Mi]{Milnor} J. Milnor, \emph{Curvatures of left invariant metrics on Lie groups}, Advances in Math. \textbf{21}(1976), no. 3, 293--329.

\bibitem[MS]{MunteanuSesum} O. Munteanu and N. Sesum, \emph{On gradient Ricci solitons}, arXiv: 0910.1105v1.

\bibitem[NW]{NiWallach} L. Ni and N. Wallach, \emph{On a classification of the gradient shrinking solitons}, Math. Res. Lett, \textbf{15}(2008), no. 5, 941--955.

\bibitem[Per]{Perelman} G. Perelman. \emph{\ Ricci flow with surgery on three manifolds}, arXiv: math.DG/0303109.

\bibitem[Pe]{PetersenGTM} P. Petersen, \emph{Riemannian geometry}, Second edition. Graduate Texts in Mathematics, \textbf{171}. Springer, New York, 2006.

\bibitem[PW1]{PWsymmetry} P. Petersen and W. Wylie, \emph{On gradient Ricci solitons with symmetry}, Proc. Amer. Math. Soc. \textbf{137}(2009), no. 6, 2085--2092.

\bibitem[PW2]{PWrigidity} P. Petersen and W. Wylie, \emph{Rigidity of gradient Ricci solitons}, Pacific J. Math. \textbf{241}(2009), no. 2, 329--345.

\bibitem[PW3]{PWclassification} P. Petersen and W. Wylie, \emph{On the classification of gradient Ricci soltions}, Geom. Topol. \textbf{14}(2010), no. 4, 2277--2300.

\bibitem[Se]{Seshadri} H. Seshadri, \emph{On Einstein four-manifolds with $S^{1}$-actions}, Math. Z., \textbf{247}(2004), no.3, 487--503.

\bibitem[Si]{Singer} I. M. Singer, \emph{Infinitesimally homogeneous spaces}, Commun. Pure Appl. Math., \textbf{13}(1960), 685--697.

\bibitem[Vi]{Villani} C. Villani, \emph{Optimal transport: Old and new}, Grundlehren der Mathematischen Wissenschaften, Vol.338, Springer-Verlag, Berlin, 2009.

\bibitem[WW]{WeiWylie} G. Wei and W. Wylie, \emph{Comparison geometry for the Bakry-Emery Ricci tensor}, J. Differential Geom. \textbf{83}(2009), no. 2, 377--405.

\bibitem[Zh]{Zhang} Z.-H. Zhang, \emph{Gradient shrinking solitons with vanishing Weyl tensor}, arXiv: 0807.1582.

\end{thebibliography}
\end{document}